\title{Normal Binary Hierarchical Models}
\author{Daniel Irving Bernstein}
\author{Seth Sullivant}
\address{Department of Mathematics \\ North Carolina State University, Raleigh, NC 27695}
\email{dibernst@ncsu.edu}
\email{smsulli2@ncsu.edu}
\theoremstyle{plain}
\newtheorem{thm}{Theorem}[section]
\newtheorem{lemma}[thm]{Lemma}
\newtheorem{prop}[thm]{Proposition}
\newtheorem*{thm*}{Theorem}
\newtheorem*{lemma*}{Lemma}
\newtheorem*{prop*}{Proposition}
\newtheorem*{cor*}{Corollary}
\newtheorem*{conj*}{Conjecture}
\theoremstyle{definition}
\newtheorem{defn}[thm]{Definition}
\newtheorem*{defn*}{Definition}
\newtheorem{ex}[thm]{Example}
\newtheorem{pr}[thm]{Problem}
\newtheorem{ques}[thm]{Question}
\theoremstyle{remark}
\newcommand{\zz}{\mathbb{Z}}
\newcommand{\nn}{\mathbb{N}}
\newcommand{\qq}{\mathbb{Q}}
\newcommand{\rr}{\mathbb{R}}
\newcommand{\kk}{\mathbb{K}}
\newcommand{\bfa}{\mathbf{a}}
\newcommand{\bfb}{\mathbf{b}}
\newcommand{\bfd}{\mathbf{d}}
\newcommand{\bfh}{\mathbf{h}}
\newcommand{\bfi}{\mathbf{i}}
\newcommand{\bfj}{\mathbf{j}}
\newcommand{\bfn}{\mathbf{n}}
\newcommand{\bfr}{\mathbf{r}}
\newcommand{\bft}{\mathbf{t}}
\newcommand{\bfv}{\mathbf{v}}
\newcommand{\bfx}{\mathbf{x}}
\newcommand{\bfy}{\mathbf{y}}
\newcommand{\bfz}{\mathbf{z}}
\newcommand{\cala}{\mathcal{A}}
\newcommand{\calc}{\mathcal{C}}
\newcommand{\cald}{\mathcal{D}}
\newcommand{\ind}{\mbox{$\perp \kern-5.5pt \perp$}}
\newcommand{\link}{\textnormal{link}}
\newcommand{\cone}{\textnormal{cone}}
\newcommand{\facet}{\textnormal{facet}}
\tikzstyle{vertex}=[circle, draw, inner sep=0pt, minimum size=6pt, fill=black]
\begin{document}

\begin{abstract}
Each simplicial complex and integer vector yields a vector
configuration whose combinatorial properties are important for
the analysis of contingency tables.  We study the normality of these
vector configurations including a description of operations on
simplicial complexes that preserve normality, constructions
of families of minimally nonnormal complexes, and computations
classifying all  of the normal complexes on up to six vertices.
We repeat this analysis for compressed vector configurations,
classifying all of the compressed complexes on up to six vertices.
\end{abstract}

\maketitle

\section{Introduction}
Associated to a simplicial complex $\calc$ with ground set $[m] := 
\{1, 2, \ldots, m\}$
and an integer vector $\bfd \in \zz^m$ is an integral matrix $\mathcal{A}_{\calc, \bfd}$ (defined in Section \ref{sec:preliminaries}).
The hierarchical model associated to $\calc, \bfd$ is a log-linear model (i.e.~discrete exponential family,
i.e.~toric variety) whose design matrix is the matrix $\mathcal{A}_{\calc, \bfd}$.
The linear transformation represented by the matrix $\mathcal{A}_{\calc, \bfd}$
takes an array $u \in \rr^{d_{1} \times \cdots \times d_{m}}$
and computes lower order marginals of $u$ according to the faces
of the simplicial complex $\calc$.
As for all log-linear models,
important relevant problems are to study properties of the lattice $\ker_\zz \mathcal{A}_{\calc, \bfd}$,
the polyhedral cone $\rr_{\geq 0} \mathcal{A}_{\calc, \bfd} := \{\mathcal{A}_{\calc, \bfd}x : x \geq 0 \}$ 
and the semigroup $\nn \mathcal{A}_{\calc, \bfd} :=  \{\mathcal{A}_{\calc, \bfd}x : x \geq 0, x \mbox{ integral} \}$.
In the special case where $\bfd = {\bf 2}$ the vector of all twos, we call $\mathcal{A}_{\calc, {\bf 2}}$ a binary hierarchical model.

A matrix $A \in \rr^{d \times n}$ is called \emph{normal} if
$\rr_{\geq 0}A  \cap \zz A  =  \nn A$.
For general matrices $A$, deciding membership in the set $\nn A$ is an
NP-complete problem.  However, for normal $A$ the problem is polynomial
time solvable since membership in both $\rr_{\geq 0} A$ and $\zz A$
have polynomial time solutions (via linear programming and Smith normal form,
respectively).  So developing methods to decide a priori whether or not a
matrix is normal can be useful for various problems related to
integer programming.  In this paper we focus on the following fundamental normality
question for the matrices of hierarchical models.

\begin{ques}\label{ques:main}
	For what values of $\calc,\bfd$ is the matrix $\mathcal{A}_{\calc,\bfd}$ normal?
\end{ques}

If $\mathcal{A}_{\calc,\bfd}$ is normal and $\bfd' \le \bfd$, then $\mathcal{A}_{\calc,\bfd'}$ is also normal,
so it makes sense to focus on the binary case of Question \ref{ques:main} as a first step.
Part of our motivation for studying this problem comes from
previous joint work of  Rauh and the second author \cite{rauh2014}, where normality was identified as a key property of hierarchical models to be able to apply the toric fiber product  construction to calculate a Markov basis.

The special case where the underlying simplicial complex
$\calc$ is a graph and $\bfd = {\bf 2}$ was
solved in \cite{sullivant2010} where it was shown that a binary graph
model is normal if and only if the graph is free of $K_4$ minors.
A complete classification is also available for the special
case where $\calc$ is the boundary of a simplex and $\bfd$ is arbitrary \cite{Bruns2011}.

In this paper we prove some new results and collect known results that relate to the classification of normal hierarchical models.
We also perform
extensive computations to check normality of hierarchical models where
$\calc$ has a small number of vertices.
Section \ref{sec:preliminaries} gives a detailed introduction to hierarchical models, the construction
and interpretation of the matrix $\mathcal{A}_{\calc, \bfd}$, and reviews the definition of normality.
Section \ref{sec:bigfacet} shows how the classification of unimodular complexes \cite{bernstein-sullivant2015}
extends to a classification of all normal binary models on $m$ vertices whose simplicial complex  has a facet with $m-1$ vertices.
Section \ref{sec:operations} reviews several operations on the simplicial complex that are known to preserve normality and also proves the new result
that normality is preserved on taking a vertex link.
Section \ref{sec:compressed} gives the same results, but for the property of being compressed.
Section \ref{sec:minimal} explores constructions for minimally nonnormal
complexes.
Section \ref{sec:computations} surveys our computational experiments to classify
complexes that give normal and compressed hierarchical models on small numbers of vertices.


\section{Preliminaries on Hierarchical Models and Normality}\label{sec:preliminaries}
	In this section, we explain how to construct the matrix $\mathcal{A}_{\calc,\bfd}$ associated to a hierarchical model and cover background material
	on normality of affine semigroups.
	
	\begin{defn}
		Let ${\bf d} = (d_1,\dots,d_n) \in \zz^n$ be such that $d_i \ge 2$ for each $i$.
		Then we define $\rr^{\bf d}$ to be the vector space of all $d_1\times \dots \times d_n$-way tables.
		That is, each ${\bf u} \in \rr^{\bf d}$ is a table in $n$ dimensions where the $i$th dimension has $d_i$ levels.
		We denote entries in a table ${\bf u}$ by $u_{i_1,\dots,i_n}$ where $(i_1,\dots,i_n) \in [d_1]\times\dots\times [d_n]$.
		We will use the shorthand ${\bf i}$ to denote $(i_1,\dots,i_n).$
	\end{defn}
	
	\begin{defn}
		Let $\mathcal{C}$ be a simplicial complex on ground set $[n]$.
		A \emph{facet} of $\mathcal{C}$ is an inclusion-maximal subset $F \subseteq [n]$
		that is contained in $\mathcal{C}$.
		We let $\facet(C)$ denote the collection of facets of $\mathcal{C}$.
	\end{defn}
	
	\begin{defn}
		Let ${\bf d} = (d_1,\dots,d_n) \in \zz^n$ and $F = \{f_1,\dots,f_k\} \subseteq [n]$.
		We define ${\bf d}_F$ to be the restriction of ${\bf d}$ to the indices in $F$,
		i.e. ${\bf d}_F = (d_{f_1},\dots,d_{f_k})$.
	\end{defn}
	
	\begin{defn}
		Let $\mathcal{C}$ be a simplicial complex on ground set $[n]$
		and let ${\bf d} = (d_1,\dots,d_n) \in \zz^n$ be such that $d_i \ge 2$ for each $i$.
		Then we have the linear map
		\[
			\pi_{\calc,{\bf d}}: \rr^{\bf d} \rightarrow \bigoplus_{F \in \facet(C)} \rr^{{\bf d}_F}
		\]
		defined by
		\[
			(u_{i_1, \ldots, i_n}  :  i_j \in [d_j])  \mapsto  \bigoplus_{\substack{{F \in {\rm facet}(\calc)}\\F=\{f_1,\dots,f_k\}}}
			\left( \sum_{\substack{{\bf i} \in [d_1]\times\dots\times[d_n] 
					\\:{\bf i}_F = {\bf j}}}  u_{i_1, \ldots, i_n} : {\bf j} \in [d_{f_1}]\times\dots\times[d_{f_k}]  \right).
		\]
	
		The matrix $\mathcal{A}_{\calc,{\bf d}}$ denotes the matrix in the standard basis
		that represents the linear transformation $\pi_{\mathcal{C},{\bf d}}$.
	\end{defn}
	
	We give an example of this construction.
	
	\begin{ex}		
	Let $\calc$ be the simplicial complex on ground set $[3] = \{1,2,3\}$ with
	${\rm facet}(\calc) =  \{ \{1\}, \{2,3\} \}$.
	The linear transformation $\pi_{\calc, \bfd}$
	maps a three-way tensor $u = (u_{ijk} : i \in [d_1], j \in [d_2], k \in [d_3] ) \in \rr^D$
	to the direct sum of a one-way tensor and a two-way tensor:
	$$
	\pi_{\calc, \bfd}( u)  =    ( \sum_{j,k}  u_{ijk} :  i \in [d_1])  \oplus 
	( \sum_{i}  u_{ijk} :  j \in [d_2], k \in [d_3]). 
	$$
	Taking $\bfd = \bf2 = (2,2,2)$, the matrix 
	$\mathcal{A}_{\calc, {\bf d}}$ could be represented
	as follows:
	$$
	\begin{pmatrix}
	1 & 1 & 1 & 1 & 0 & 0 & 0 & 0 \\
	0 & 0 & 0 & 0 & 1 & 1 & 1 & 1 \\
	\hline
	1 & 0 & 0 & 0 & 1 & 0 & 0 & 0 \\
	0 & 1 & 0 & 0 & 0 & 1 & 0 & 0  \\
	0 & 0 & 1 & 0 & 0 & 0 & 1 & 0 \\
	0 & 0 & 0 & 1 & 0 & 0 & 0 & 1  \\
	\end{pmatrix},
	$$
	the horizontal line dividing the image space into the parts corresponding
	to the two facets $\{1\}$ and $\{2,3\}$ of $\calc$.
	\end{ex}
	
	It can be useful to be precise about how to write down the matrix
	$\mathcal{A}_{\calc, \bfd}$.  The columns of the matrix
	$\mathcal{A}_{\calc, \bfd}$ are indexed by elements of the set
	$\prod_{j \in [n]}  [d_j]$ and the rows are indexed by pairs
	$(F, e)$ where $F \in {\rm facet}(\calc)$ and $e \in \prod_{j \in F}  [d_j]$.
	The entry in position with row index $(F, e)$ and column indexed by $i \in \prod_{j \in [n]}  [d_j]$ is $1$ if $e  =  (i_j :  j \in F)$, otherwise it is equal to
	$0$.
	We are interested in whether or not this matrix is normal.
	
	\begin{defn}\label{def:normal}
	Let $A \in \qq^{d\times n}$.
	We use the following notation for the lattice, cone, and semigroup spanned by the columns of $A$:
	\[
		\zz A := \{A\bfz : \bfz \in \zz^n\} \qquad \rr_{\ge 0} A := \{A\bfr : \bfr \in \rr_{\ge0}^n\} \qquad \nn A := \{A\bfn : \bfn \in \nn^n\}.
	\]
	Note that $\nn A \subseteq \zz A \cap \rr_{\ge 0} A$.
	We say the $\bfh$ is a \emph{hole} of $A$ if $\bfh \in \zz A \cap \rr_{\ge 0} A$ but $\bfh \notin \nn A$.
	If $A$ is free of holes, we say that $A$ is \emph{normal}.
\end{defn}

Not much is known about normality of the matrices $A_{\calc, \bfd}$.
To end this section, we survey some known results from the literature 
on normality of hierarchical models.

\begin{thm}\cite[{ Theorem 1}]{sullivant2010}
Let $\calc$ be a graph and $\bfd = {\bf 2}$.  Then $A_{\calc, \bfd}$
is normal if and only if $\calc$ is $K_{4}$ minor free.
\end{thm}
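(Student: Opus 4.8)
The statement is an equivalence, so I would prove the two directions by quite different means. For the forward direction---if $\calc$ is $K_4$-minor-free then $A_{\calc,\mathbf{2}}$ is normal---my plan is to exploit the structure theorem that a graph is $K_4$-minor-free precisely when it is series-parallel, and hence can be built from single edges by the three operations of subdividing an edge, adding a parallel edge, and gluing two such graphs along a shared edge (a $2$-sum). First I would dispatch the base case: the configuration of a single edge $K_2$ is unimodular, and a unimodular configuration is normal. Then I would argue that each building operation preserves normality. Subdividing an edge adjoins a degree-two vertex and amounts to coning over the binary edge-margin; adding a parallel edge merely duplicates the rows of that margin, which cannot affect membership in any of $\zz A_{\calc,\mathbf{2}}$, $\rr_{\ge 0} A_{\calc,\mathbf{2}}$, or $\nn A_{\calc,\mathbf{2}}$. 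The essential step is the $2$-sum along an edge $\{i,j\}$, which I would model as a toric fiber product over the two-level margin of that edge and then invoke a normality-of-toric-fiber-products result: the fiber product of two normal binary models over a common edge-margin is again normal. Inducting over the series-parallel decomposition then shows every $K_4$-minor-free graph yields a normal configuration.

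For the reverse direction I would argue by contrapositive, showing that a $K_4$ minor forces non-normality. I would first compute the base case, exhibiting an explicit hole $\bfh \in \zz A_{K_4,\mathbf{2}} \cap \rr_{\ge 0} A_{K_4,\mathbf{2}} \setminus \nn A_{K_4,\mathbf{2}}$; this reduces to producing a nonnegative rational combination of columns that is integral, together with a certificate that it admits no nonnegative integer representation, which is a finite check. I would then show that non-normality is inherited along the minor operations. Edge contraction is the cleanest case: the columns of $A_{\calc,\mathbf{2}}$ corresponding to tables with $x_i = x_j$ are exactly those on which the linear functional ``number of disagreements across $e$'' vanishes, and since that functional is a nonnegative combination of margin coordinates it is nonnegative on the whole cone, so these columns span a face of $\rr_{\ge 0} A_{\calc,\mathbf{2}}$. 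After discarding the now-trivial $e$-margin rows, this face configuration is $A_{\calc/e,\mathbf{2}}$, and since a face of a normal affine semigroup is normal, contraction preserves normality. Combined with the base case, this yields that any graph admitting $K_4$ as a minor has a hole.

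The main obstacle lives in two spots. In the forward direction, the $2$-sum step needs the precise statement and hypotheses under which a toric fiber product of normal configurations is itself normal; checking those hypotheses for gluing binary models along a single edge-margin is the technical heart of that half. In the reverse direction, contraction is transparent via the face argument above, but edge and vertex deletions are not: deleting an edge removes its margin rows, which is a coordinate projection of the cone and need not preserve normality a priori. I therefore expect the real difficulty to be routing around deletion. The natural way out is to use that $K_4$ is cubic, so that being $K_4$-minor-free is equivalent to forbidding a subdivision of $K_4$ as a subgraph; one can then reduce a $K_4$-subdivision by series-reductions (the inverse of the normality-preserving subdivision operation) back to $K_4$ itself, thereby avoiding the problematic deletion step and completing the argument.
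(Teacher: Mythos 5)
First, a point of comparison: the paper does not prove this theorem at all --- it is quoted as a known result from \cite{sullivant2010} --- so your proposal can only be measured against that published argument, whose toolkit the present paper collects (Lemma \ref{deletioncontraction}, Lemma \ref{reducible}, Proposition \ref{almost}).

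The genuine gap is in your forward direction, and it sits exactly where you claim things are easy. Subdividing an edge is \emph{not} a cone operation in this paper's sense (coning adds a vertex to every facet), and no standard operation covers it. Conversely, the step you call the technical heart --- the $2$-sum along a \emph{shared edge} --- is precisely the reducible gluing of Lemma \ref{reducible} (a codimension-zero toric fiber product), i.e.\ the known, easy part. Now observe what the justified operations alone can build: adding parallel edges is a no-op on the simplicial complex, and edge-gluings of edges and triangles produce only chordal graphs (clique-sums of chordal graphs are chordal). So already the $4$-cycle is unreachable without the subdivision step, whose justification is incorrect; its normality is genuinely nontrivial (cf.\ Theorem \ref{thm:4cycle}). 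Worse, the principle one would need to fix this --- that gluing two normal complexes along a two-element vertex set that is \emph{not} a face preserves normality --- is false: $K_4 - e$ is normal, being two triangles glued along a common edge (Theorem \ref{triangle} plus Lemma \ref{reducible}), and the single edge $e$ is normal, yet gluing them along the non-face $\{i,j\}$ yields $K_4$, which is not normal. Hence the $K_4$-minor-free hypothesis must enter the sufficiency proof in an essential way, and your outline never supplies the mechanism. The published proof's mechanism is the facet-removal criterion (Lemma 2 of \cite{sullivant2010}, specialized as Proposition \ref{almost} here): with the explicit $\mathcal{H}$-representation of the marginal cone of a $K_4$-minor-free graph in hand (inequalities with $0,\pm 1$ coefficients), one proves $2$-trees normal by gluing triangles along edges and then deletes edges one at a time, which also disposes of subdivision. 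Obtaining that facet description is where the real work lies.

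Your reverse direction, by contrast, is essentially sound and close to the standard route: an explicit hole in $K_4$, contraction via the face argument (which is correct --- the columns with $x_i = x_j$ span a face of the cone, faces of normal configurations are normal, and that face configuration is $\cala_{\calc/e}$ up to redundant rows), and the cubic-pattern trick ($K_4$ minor $\Leftrightarrow$ $K_4$ subdivision) to avoid edge deletion. Two caveats. After contracting the subdivision paths inside the induced subgraph you get a $4$-vertex graph \emph{containing} $K_4$; this is harmless only because $K_4$ is complete, so stray ambient edges collapse into it --- say so. More importantly, you still need vertex deletion to pass to that induced subgraph, and you flag vertex deletion as suspect alongside edge deletion. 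It does preserve normality (this is Lemma \ref{deletioncontraction}, proved by lifting a point of the smaller lattice-cone intersection to the level $x_v = 0$ slice), but in a blind proof that lemma must be proved, not doubted; as written, your contrapositive argument has no way to discard the part of the graph outside the $K_4$-subdivision.
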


\begin{thm}\cite[{ Theorem 2.2}]{Bruns2011}\label{triangle}
Let $\calc$ be the simplicial complex whose facets
are all $m-1$ element subsets of $[m]$.  Then $A_{\calc,\bfd}$ is normal
in precisely the following situations up to symmetry:
\begin{enumerate}
\item At most two of the $d_{v}$ are greater than $2$.
\item $m = 3$ and $\bfd = (3,3,d_{3})$ for any $d_{3} \in \nn$.
\item $m = 3$ and $\bfd = (3,4,4), (3,4,5)$, or $(3,5,5)$.
\end{enumerate}
\end{thm}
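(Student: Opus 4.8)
The plan is to first recast normality of $A_{\calc,\bfd}$ geometrically. Since $\calc = \partial\Delta_{m-1}$ has exactly $m$ facets and every column of $A_{\calc,\bfd}$ (one per cell of $\prod_j [d_j]$) has exactly one $1$ in each facet block, all columns lie on the hyperplane where the coordinates sum to $m$. Thus $A_{\calc,\bfd}$ is normal if and only if the lattice polytope $P = \conv(\text{columns})$ has the integer decomposition property relative to the lattice $\zz A_{\calc,\bfd}$: every point of $\zz A_{\calc,\bfd}$ lying in $rP$ is a sum of $r$ columns. Concretely, $\rr_{\ge 0} A_{\calc,\bfd}$ consists of the facet-margin vectors of nonnegative real tables, $\zz A_{\calc,\bfd}$ of the margin vectors of signed integer tables, and $\nn A_{\calc,\bfd}$ of the margins of nonnegative integer tables; a hole is a margin pattern realizable fractionally and by a signed integer table but not by any nonnegative integer table. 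I would then prove the two directions separately, using throughout the monotonicity recorded in the introduction: non-normality of $A_{\calc,\bfd'}$ ascends to every $\bfd \ge \bfd'$.

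For the non-normal direction I would reduce the entire (infinite, all-$m$) family to a short list of base witnesses using two reductions. The first is $\bfd$-monotonicity, which lets me fix $m$ and treat only configurations minimal under $\bfd \mapsto \bfd'$. The second is the vertex-link operation of Section~\ref{sec:operations}: the link of any vertex of $\partial\Delta_{m-1}$ is $\partial\Delta_{m-2}$ with the corresponding coordinate deleted, and since links preserve normality, non-normality of an $(m-1)$-vertex configuration forces non-normality of every $m$-vertex configuration restricting to it. Tracking which configurations are minimal under both operations, one is left (up to symmetry) with the base cases $(4,4,4)$ and $(3,4,6)$ at $m=3$, together with $(3,3,3,2)$ at $m=4$ — every one-coordinate deletion of the last, namely $(3,3,3)$ or $(3,3,2)$, being normal, so it is a genuinely new minimal nonnormal complex. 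For each base case I would exhibit an explicit hole: a half-integral table, built from a cyclic or Latin-square support pattern, whose facet margins are integral and fractionally realizable but which cannot be split into nonnegative integer single-cell contributions; non-membership in $\nn A_{\calc,\bfd}$ is then confirmed by a short parity or linear-programming certificate.

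For the normal direction I must show the listed configurations have no holes. Case (1) (at most two $d_v > 2$) is the structural heart: up to symmetry $\bfd = (d_1, d_2, 2, \dots, 2)$, and I would establish normality uniformly either by producing a regular unimodular triangulation of $P$ that slices along the binary coordinates, or by realizing $A_{\calc,\bfd}$ as a normality-preserving toric fiber product of small normal building blocks glued along those coordinates (the construction flagged in the introduction as our motivation), since both a regular unimodular triangulation and such a toric fiber product imply normality. The sporadic families in cases (2) and (3) are low-dimensional and, lacking a clean pattern, would be verified by computing the Hilbert basis of $\nn A_{\calc,\bfd}$ directly (e.g.\ with Normaliz) and checking it consists only of the columns; for the one-parameter family $(3,3,d_3)$ I would instead seek a triangulation or Hilbert-basis description stable in $d_3$.

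The main obstacle is the exact location of the threshold in the sporadic $m = 3$ cases. There is no evident structural reason that $(3,5,5)$ is normal while $(3,4,6)$ and $(3,6,6)$ are not, so proving normality precisely up to the boundary — rather than for some clean but non-tight infinite family — appears to require explicit computation, as in the source. A secondary difficulty is rigor in the cross-$m$ reduction: one must verify that the link operation genuinely reflects non-normality (not merely preserves normality in the convenient direction) so that the finitely many small witnesses certify non-normality for all larger $\calc$ and $\bfd$, and that no further base case hides at $m \ge 5$ — which follows because any nonnormal configuration there still restricts, after deleting a suitable coordinate, to a nonnormal configuration on fewer vertices.
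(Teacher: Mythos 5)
You should know at the outset that the paper offers no proof of Theorem \ref{triangle} to compare against: it is quoted verbatim from \cite{Bruns2011}, where the classification was obtained by combining structural results with substantial Normaliz computation. Judged on its own terms, your proposal has a sound outer skeleton: $\bfd$-monotonicity plus the contrapositive of Theorem \ref{link} (the link of any vertex of $\partial\Delta_{m-1}$ is $\partial\Delta_{m-2}$, so non-normality propagates upward) does reduce the non-normal direction to the minimal witnesses $(4,4,4)$ and $(3,4,6)$ at $m=3$ and $(3,3,3,2)$ at $m=4$, and your bookkeeping of which configurations are minimal is correct. But the witnesses themselves are never produced. Only $(4,4,4)$ is covered by the literature (the no-three-way-interaction result of Ohsugi--Hibi that this paper cites elsewhere); for $(3,4,6)$ and $(3,3,3,2)$ you promise ``a half-integral table with cyclic or Latin-square support'' plus ``a short parity or linear-programming certificate,'' but certifying non-membership in $\nn \mathcal{A}_{\calc,\bfd}$ is exactly the hard step, and nothing in your sketch says what the certificate is. As written, the non-normal direction rests on two unproved claims.

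The normal direction has a more decisive flaw. For case (1), one of your two proposed routes is impossible: $\partial\Delta_{m-1}$ admits no reducible decomposition, since a facet $[m]\setminus\{v\}$ can only lie in a subcomplex whose vertex set contains $[m]\setminus\{v\}$, so two pieces on proper vertex subsets can host at most two of the $m\ge 3$ facets; hence there is no gluing along which to form a toric fiber product, and Lemma \ref{reducible} gives nothing. The unimodular-triangulation alternative is merely asserted. The natural fix is the machinery of Section \ref{sec:bigfacet} of this very paper: $\partial\Delta_{m-1}=\Lambda(\partial\Delta_{m-2})$, and when the lifted vertex is binary the identity $\mathcal{A}_{\Lambda(\calc),(\bfd,2)}=\Lambda(\mathcal{A}_{\calc,\bfd})$ holds with arbitrary $\bfd$ on the base; iterating, $\mathcal{A}_{\partial\Delta_{m-1},(d_1,d_2,2,\ldots,2)}$ is an iterated Lawrence lifting of $\mathcal{A}_{[1][2],(d_1,d_2)}$, the totally unimodular incidence matrix of $K_{d_1,d_2}$, and since Lawrence lifting preserves unimodularity, Proposition \ref{uninormal} yields normality for all $m$. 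Finally, the infinite family $(3,3,d_3)$ cannot be settled by Normaliz, which only ever checks finitely many $d_3$; you flag this yourself, but ``seek a triangulation or Hilbert-basis description stable in $d_3$'' is precisely the missing theorem, and it is the crux of the infinite part of the classification in \cite{Bruns2011}. So both directions have genuine gaps: what you have is a reasonable research plan, not a proof.
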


\begin{thm}\cite[{Theorem 47 and following remarks}]{rauh2014} \label{thm:4cycle}
Let $\calc = [12][23][34][14]$ be the four-cycle graph.  Then
$A_{\calc, \bfd}$ is normal if $\bfd = (2,d_{2},2,d_{4})$
or $\bfd = (2,d_{2}, 3, d_{4})$ with $d_{2}, d_{3} \in \nn$.
\end{thm}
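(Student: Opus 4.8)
The plan is to realize $A_{\calc,\bfd}$ as a toric fiber product of two simpler models and then to invoke a normality--preservation theorem for that construction under hypotheses that hold in exactly these dimension ranges. Write the four-cycle as the union of the two paths $\Gamma_1 = [12][23]$ on $\{1,2,3\}$ and $\Gamma_2 = [34][14]$ on $\{1,3,4\}$, which share the vertex set $\{1,3\}$ and whose edge sets together recover $\calc$. Each column of $A_{\calc,\bfd}$ is indexed by a state $(i_1,i_2,i_3,i_4)$, and for fixed $(i_1,i_3)$ its nonzero pattern splits cleanly into the rows coming from the edges at vertex $2$ (which depend only on $(i_1,i_2,i_3)$) and the rows coming from the edges at vertex $4$ (which depend only on $(i_1,i_3,i_4)$). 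This is precisely the combinatorial signature of a higher--codimension toric fiber product of $A_{\Gamma_1,(d_1,d_2,d_3)}$ and $A_{\Gamma_2,(d_1,d_3,d_4)}$, graded by the states of the shared pair $\{1,3\}$, whose base is the rectangular grid $[d_1]\times[d_3]$.

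First I would dispose of the two factors. Each of $\Gamma_1,\Gamma_2$ is a path on three vertices, hence a decomposable (chordal) graph, and decomposable hierarchical models are normal for every dimension vector; in particular both factors are normal for all values of $d_2$ and $d_4$. Consequently no hole of $A_{\calc,\bfd}$ can be internal to a single factor, and any obstruction to normality must arise from the gluing along the $\{1,3\}$-margin. This reduces Question \ref{ques:main} for $\calc$ to a statement purely about the base grading.

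The heart of the argument is the normality--preservation statement for the toric fiber product: when both factors are normal and the gluing satisfies the required compatibility hypotheses, the product is normal. The decisive point is that here the base is the transportation--type structure on the grid $[d_1]\times[d_3]$, which is $2\times 2$ when $\bfd=(2,d_2,2,d_4)$ and $2\times 3$ when $\bfd=(2,d_2,3,d_4)$. For such small rectangular bases the associated gluing semigroup is unimodular, and this is exactly what permits lifting an arbitrary element of $\rr_{\geq 0}A_{\calc,\bfd}\cap \zz A_{\calc,\bfd}$: one decomposes its two marginal contributions inside the normal factors $\Gamma_1,\Gamma_2$ and then re-glues them along the shared $\{1,3\}$-margin without introducing fractions. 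I expect the verification of these compatibility hypotheses---establishing that a $2\times 2$ or $2\times 3$ base creates no hole, in contrast to larger bases where normality is known to fail---to be the main obstacle, and I would carry it out by checking the explicit conditions of the toric fiber product normality theorem against these two base shapes, noting that the unimodularity of $2\times n$ transportation polytopes is what singles out the admissible cases.
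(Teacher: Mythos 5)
You should first note that the paper does not prove this statement at all: it is imported by citation from \cite{rauh2014}, and the argument there is indeed built on the decomposition you describe --- the four-cycle as a toric fiber product of the two paths $[12][23]$ and $[34][14]$ glued over the margin on $\{1,3\}$, with each path decomposable and hence normal (in fact compressed) for every $\bfd$. Your setup is sound up to that point, and you correctly recognize that Lemma \ref{reducible} cannot be used here because $\{1,3\}$ is not a face of $\calc$, so one is forced into the higher-codimension toric fiber product setting.

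The gap is at the decisive step, and it is not a small one. The criterion you propose to single out the admissible cases --- ``unimodularity of the $2\times 2$ and $2\times 3$ transportation bases'' --- does not single out anything: the base configuration on $[d_1]\times[d_3]$, whose columns are the vectors $e_i \oplus e_j$, is the vertex-edge incidence matrix of the complete bipartite graph $K_{d_1,d_3}$, which is totally unimodular for \emph{every} $d_1,d_3$. If unimodularity of the base were the operative hypothesis, your argument would prove normality of $A_{\calc,\bfd}$ for all $\bfd$, which is more than the theorem asserts and more than is known. Moreover, there is no black-box theorem of the form ``both factors normal $+$ compatibility of the gluing $\Rightarrow$ the toric fiber product is normal'' once the codimension is positive; such preservation is exactly what fails in general, and proving a usable substitute is the entire content of the result. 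The quantity that actually governs the situation is the codimension of the toric fiber product, $(d_1-1)(d_3-1)$, which equals $1$ when $\bfd=(2,d_2,2,d_4)$, equals $2$ when $\bfd=(2,d_2,3,d_4)$, and grows beyond that; in \cite{rauh2014} normality is obtained by lifting Markov and Gr\"obner bases through these low-codimension products (with normality following from squarefree initial ideals), and the lifting machinery is what breaks down for larger grids --- not the unimodularity of the base, which never fails. As written, your proposal defers precisely this point (what you call ``the main obstacle'') to an unspecified theorem whose hypothesis you have identified incorrectly, so the heart of the proof is missing.
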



\section{Complexes with a Big Facet}\label{sec:bigfacet}

	The classification of unimodular binary hierarchical models given in \cite{bernstein-sullivant2015}
	easily extends to a classification of normality of all binary hierarchical models
	 containing a big facet (defined below).
	Throughout this section we assume that $\bfd = (2,\dots,2)$
	and we abbreviate $\mathcal{A}_{\calc,\bfd}$ by $\mathcal{A}_\calc$.
	We also abuse language and say ``$\calc$ is normal/unimodular''
	to mean ``$\mathcal{A}_\calc$ is normal/unimodular.''
	
\begin{defn}\label{def:unimodular}
		A matrix $A \in \zz^{d \times n}$ is \emph{unimodular} if 
the polyhedron $P_{A,b} = \{x \in \rr^s : Ax = b, x \ge 0\}$
		 has all integral vertices for every $b \in \zz A \cap \rr_{\geq 0}A$.
\end{defn}

\begin{prop}\label{uninormal}
		If $A \in \zz^{d\times n}$ is unimodular, then $A$ is normal.
\end{prop}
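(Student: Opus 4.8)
The plan is to prove the contrapositive-free direct containment, using the definitions exactly as stated. Since we always have $\nn A \subseteq \zz A \cap \rr_{\geq 0} A$ by Definition \ref{def:normal}, normality of $A$ amounts to the reverse containment $\zz A \cap \rr_{\geq 0} A \subseteq \nn A$. So I would fix an arbitrary $\bfb \in \zz A \cap \rr_{\geq 0} A$ and produce an integral nonnegative vector $\bfx$ with $A\bfx = \bfb$, which exhibits $\bfb \in \nn A$ and shows $A$ is free of holes.

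The key step is to translate membership in $\zz A \cap \rr_{\geq 0} A$ into a statement about the polyhedron $P_{A,\bfb} = \{x \in \rr^n : Ax = \bfb,\ x \geq 0\}$ appearing in Definition \ref{def:unimodular}. First I would observe that $\bfb \in \rr_{\geq 0} A$ means precisely that there is some real $\bfr \in \rr_{\geq 0}^n$ with $A\bfr = \bfb$, i.e.\ $P_{A,\bfb}$ is nonempty. Next, because $\bfb$ is chosen in $\zz A \cap \rr_{\geq 0} A$, the hypothesis of unimodularity applies to this very $\bfb$: Definition \ref{def:unimodular} guarantees that $P_{A,\bfb}$ has all integral vertices. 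A nonempty polyhedron of the form $\{x : Ax = \bfb,\ x \geq 0\}$ is pointed (the nonnegativity constraints force the lineality space to be trivial), so it has at least one vertex; any such vertex $\bfx$ is then an integral point of $P_{A,\bfb}$, meaning $\bfx \in \nn^n$ and $A\bfx = \bfb$. Therefore $\bfb = A\bfx \in \nn A$, completing the containment.

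I expect the main obstacle to be the small polyhedral fact that a nonempty polyhedron cut out by $Ax = \bfb$ together with $x \geq 0$ actually possesses a vertex, so that ``all vertices are integral'' has content rather than being vacuous. This is standard: the feasible region lies in the nonnegative orthant and hence contains no line, so being pointed it attains a vertex (for instance by the characterization that a nonempty pointed polyhedron has at least one vertex, or by taking a basic feasible solution of the linear system). I would state this briefly and cite it as a standard fact from polyhedral theory rather than reproving it. The remainder of the argument is then just unwinding the three definitions, so the proof is short once this vertex-existence point is pinned down.
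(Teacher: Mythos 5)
Your proof is correct and follows essentially the same route as the paper's: both fix $b \in \zz A \cap \rr_{\geq 0}A$, observe that $P_{A,b}$ is nonempty, and extract an integral, nonnegative vertex of $P_{A,b}$ witnessing $b \in \nn A$. The only cosmetic difference is that you obtain integrality directly from Definition \ref{def:unimodular} (after noting the polyhedron is pointed, hence has a vertex), whereas the paper exhibits the vertex explicitly as a basic feasible solution of the form $(A_v)^{-1}b$ for a nonsingular square submatrix $A_v$ and checks its integrality by hand.
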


	\begin{defn}
		For any matrix $A \in \rr^{s \times t}$, we define the \emph{Lawrence lifting} of $A$ to be the matrix
		\[
			\Lambda(A) \quad =  \quad
				\begin{pmatrix}
					A & {\bf 0}\\
					{\bf 0} & A\\
					{\bf 1} & {\bf 1}
				\end{pmatrix}  \quad \in \quad   \rr^{ (2s + t)  \times 2t}
		\]
		where ${\bf 0}$ denotes the $s \times t$ matrix of all zeroes and 
		${\bf 1}$ denotes a $t \times t$ identity matrix.
		The kernel of this matrix is
		\[
			\{(u,-u): u \in \ker(A)\}.
		\]
		When a matrix $M$ has a kernel of this form, we say that $M$ is \emph{of Lawrence type}.
	\end{defn}

	\begin{defn}\label{def:lawrence}
		Let $\mathcal{C}$ be a simplicial complex on $[n]$.
		We define the \emph{Lawrence lifting} of $\mathcal{C}$
		to be the simplicial complex $\Lambda(\mathcal{C})$ on $[n+1]$ that has the following set of facets:
		\[
			\{ [n] \} \cup \{F \cup \{n+1\} : F \textnormal{ is a facet of } \mathcal{C}\}.
		\]
		In this case, we refer to the facet $[n]$ as a \emph{big facet}.
		Any simplicial complex $\mathcal{D}$ that contains a big facet $F$ (i.e. a facet containing all but one vertex)
		is the Lawrence lifting of the complex $\link_v(\mathcal{D})$ where $v$ is the vertex of $\mathcal{D}$ not contained in $F$  (see Definition \ref{defn:link} for
		the definition of $\link_v(\mathcal{D})$).
		In this case we say that $\mathcal{D}$ is \emph{of Lawrence type}.
	\end{defn}
	
	The following proposition justifies the multiple definitions of ``Lawrence lifting.''

	\begin{prop}\label{lawrenceform} \cite[Theorem 15]{Santos2003}
		$\Lambda(\mathcal{A}_{\calc}) = \mathcal{A}_{\Lambda(\calc)}$.
	\end{prop}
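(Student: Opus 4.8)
The statement is purely a matter of matching up the combinatorial indexing of the two matrices, so my plan is to fix compatible orderings of rows and columns and then check that the entries agree block by block. Write $A = \mathcal{A}_\calc$, let $t = 2^n$ be its number of columns, and let $s = \sum_{F \in \facet(\calc)} 2^{|F|}$ be its number of rows. Both $\Lambda(A)$ and $\mathcal{A}_{\Lambda(\calc)}$ have columns indexed by $[2]^{n+1}$, hence $2t$ of them, and I would split these into two blocks of size $t$ according to whether the final coordinate $i_{n+1}$ equals $1$ or equals $2$, ordering within each block by $\bfi \in [2]^n$ exactly as the columns of $A$ are ordered. This is precisely the column split that underlies the block form of $\Lambda(A)$.

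Next I would read off the rows of $\mathcal{A}_{\Lambda(\calc)}$ facet by facet, using the explicit entry rule that the entry in row $(G,e)$ and column $\bfj \in [2]^{n+1}$ is $1$ exactly when $\bfj_G = e$. The facets of $\Lambda(\calc)$ come in two kinds. For the big facet $[n]$ a row is indexed by $e \in [2]^n$, and since the restriction of $(\bfi, i_{n+1})$ to $[n]$ is just $\bfi$, the entry is $1$ iff $\bfi = e$, independently of $i_{n+1}$; hence each such row carries a single $1$ in position $\bfi = e$ of \emph{both} column blocks, so together the big-facet rows form the block $(\mathbf{1} \mid \mathbf{1})$ with $\mathbf{1}$ the $t \times t$ identity. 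For a facet $F \cup \{n+1\}$ with $F \in \facet(\calc)$, a row is indexed by $(e, e_{n+1})$ with $e \in [2]^F$ and $e_{n+1} \in [2]$, and the entry in column $(\bfi, i_{n+1})$ is $1$ iff $\bfi_F = e$ and $i_{n+1} = e_{n+1}$. The condition $\bfi_F = e$ is exactly the entry rule for row $(F,e)$ of $A$, so the rows with $e_{n+1} = 1$ reproduce $(A \mid \mathbf{0})$ and those with $e_{n+1} = 2$ reproduce $(\mathbf{0} \mid A)$.

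Finally I would order the rows of $\mathcal{A}_{\Lambda(\calc)}$ so that the $e_{n+1}=1$ rows of the facets $F \cup \{n+1\}$ come first, then the $e_{n+1}=2$ rows, then the big-facet rows; the three groups then realize the three horizontal blocks of $\Lambda(A)$ exactly. A quick count confirms consistency: the facets $F \cup \{n+1\}$ contribute $\sum_{F} 2^{|F|+1} = 2s$ rows and the big facet contributes $t$ rows, matching the $2s+t$ rows of $\Lambda(A)$. Since reordering rows only permutes the summands of the target direct sum and reordering columns only relabels the standard basis of the domain, this establishes $\Lambda(\mathcal{A}_\calc) = \mathcal{A}_{\Lambda(\calc)}$. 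There is no genuine obstacle beyond bookkeeping; the one point requiring care is the big facet $[n]$, whose rows are insensitive to the last coordinate and therefore produce the identity-against-identity block rather than anything supported in a single column block.
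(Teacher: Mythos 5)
Your proof is correct, and it fills a gap rather than parallels one: the paper offers no argument for this proposition at all, deferring entirely to the citation of Theorem 15 in Santos's paper. Your block-by-block verification---splitting the columns of $\mathcal{A}_{\Lambda(\calc)}$ into two blocks by the value of the last coordinate, identifying the rows of the big facet $[n]$ (which are insensitive to that coordinate) with the $(\mathbf{1} \mid \mathbf{1})$ block, and identifying the rows of the facets $F \cup \{n+1\}$ with $(A \mid \mathbf{0})$ or $(\mathbf{0} \mid A)$ according to $e_{n+1}$---is exactly the bookkeeping that the citation suppresses, and your row count $\sum_{F} 2^{|F|+1} + 2^{n} = 2s + t$ confirms that nothing is omitted. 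What your argument buys is a short, self-contained proof in the binary setting the paper actually uses, together with an explicit statement of the sense in which the equality holds (up to the choice of orderings of the row and column index sets, which the paper never fixes anyway); what the citation buys is generality, since the external result covers Lawrence-type constructions beyond the case $\bfd = \mathbf{2}$. One small caveat worth recording: like the paper's own definition of $\Lambda(\calc)$, your argument treats $[n]$ as a genuine facet, which fails only in the degenerate case where $\calc$ is the full simplex $2^{[n]}$; there $[n]$ is swallowed by $[n+1]$, and the two matrices agree only after discarding the then-redundant big-facet rows. Outside that trivial case your proof is complete.
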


	\begin{prop}\label{lawrencenormalmatrix}  \cite[Corollary 5]{ohsugi-hibi2010}
		Let $A$ be an integer matrix of Lawrence type.
		Then $A$ is normal if and only if $A$ is unimodular.
	\end{prop}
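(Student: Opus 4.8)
The plan is to prove only the forward implication, that normality implies unimodularity, since the reverse is exactly Proposition \ref{uninormal}. First I would reduce to the concrete case $A = \Lambda(B)$. Normality and unimodularity are properties of the column configuration that are invariant under lattice isomorphism, and any two integer matrices with the same kernel (on the same column index set) have lattice-isomorphic configurations via the map sending the $i$th column of one to the $i$th column of the other. Since a matrix of Lawrence type has, by definition, the same kernel $\{(u,-u) : u \in \ker B\}$ as the explicit Lawrence lifting $\Lambda(B)$, I may assume $A = \Lambda(B)$ throughout.

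The key step is to translate both normality and unimodularity into statements about a single family of box polytopes. Writing a point of the ambient space as $(p,q,w)$ according to the three blocks of $\Lambda(B)$ and substituting $y = w - x$, the fiber $\{(x,y) \ge 0 : Bx = p, By = q, x + y = w\}$ is affinely isomorphic to the box polytope $P(w,p) := \{x : 0 \le x \le w, \, Bx = p\}$, which is nonempty exactly when $q = Bw - p$. Under this correspondence, $(p,q,w) \in \zz A \cap \rr_{\ge 0} A$ means $w \in \zz_{\ge 0}^t$, $p \in \zz B$, and $P(w,p) \ne \emptyset$, whereas $(p,q,w) \in \nn A$ means that $P(w,p)$ contains a lattice point. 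Thus $A$ is normal if and only if every nonempty $P(w,p)$ (for integral $w \ge 0$ and $p \in \zz B$) contains a lattice point, and $A$ is unimodular if and only if every such $P(w,p)$ has only integral vertices. With these reformulations, the implication unimodular $\Rightarrow$ normal is immediate, since a nonempty polytope always has a vertex.

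For the implication normal $\Rightarrow$ unimodular I would argue by contraposition, and this hole construction is where the real work lies. Suppose some $P(w_0, p_0)$ has a non-integral vertex $v$. Because $w_0$ is integral, every coordinate of $v$ sitting at a box bound (either $0$ or an entry of $w_0$) is an integer, so the fractional coordinates of $v$ all lie among the ``free'' coordinates, whose columns $B_N$, with $N := \supp(v - \lfloor v \rfloor)$, are linearly independent. I would then set $w' := \mathbf{1}_N$ and $p' := B_N(v - \lfloor v \rfloor)_N$; a short check shows $p' \in \zz B$, since it equals $p_0$ minus an integer combination of columns of $B$. The polytope $P(w',p')$ is nonempty because it contains the fractional part of $v$, whose coordinates on $N$ lie strictly in $(0,1)$, but linear independence of $B_N$ forces the solution of $B_N x_N = p'$ to be unique, so $P(w',p')$ is the single non-integral point $(v - \lfloor v \rfloor)_N$ and contains no lattice point. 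This exhibits $(p', Bw' - p', w')$ as a hole, so $A$ is not normal. The main obstacle is precisely this final step: guaranteeing simultaneously that the constructed box polytope is nonempty, lattice-admissible ($p' \in \zz B$), and devoid of integer points. This is exactly the point at which the rigidity of the Lawrence (box) structure is essential, as it is what forces a non-integral vertex to degenerate into an isolated fractional point rather than being resolvable by nearby lattice points.
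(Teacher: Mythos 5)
Your proposal is correct, but there is nothing in the paper to compare it against line-by-line: the paper does not prove Proposition \ref{lawrencenormalmatrix}, it simply quotes it from \cite[Corollary 5]{ohsugi-hibi2010} (with the easy direction being Proposition \ref{uninormal}). What you have written is a self-contained version of the classical box-polytope argument that underlies the cited result, and it checks out. Your reduction to $A = \Lambda(B)$ is legitimate because both normality and unimodularity in the sense of Definition \ref{def:unimodular} depend only on $\ker_\zz A$: membership of $b = Az$ in $\rr_{\ge 0}A$, $\zz A$, $\nn A$ is a statement about the cosets $z + \ker_\rr A$ and $z + \ker_\zz A$ meeting $\rr^n_{\ge 0}$ and $\nn^n$, and $\ker_\rr A$ is the real span of $\ker_\zz A$; one should add the routine remark that $B$ may be chosen to be an \emph{integer} matrix, which is possible because $\ker_\rr M$ is a rational subspace when $M$ is integral. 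Your fiber analysis is also right: fibers of $\Lambda(B)$ over $(p, Bw - p, w)$ are affinely and lattice-isomorphic to the boxes $P(w,p) = \{x : 0 \le x \le w,\ Bx = p\}$, a point of such a box is a vertex exactly when the columns of $B$ indexed by its coordinates strictly between the bounds are linearly independent, and since an integral $w$ forces all bound-attaining coordinates to be integers, the fractional support $N$ of a non-integral vertex $v$ sits inside these free coordinates, so $B_N$ is independent (note $N$ is in general a proper subset of the free coordinates, which is all you need, though your wording conflates the two). The shrinking step $w' = \mathbf{1}_N$, $p' = p_0 - B\lfloor v\rfloor$ then isolates the fractional point as the entire fiber, yielding a hole, which is precisely the contrapositive you wanted. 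Compared with the paper, your route buys a proof that is independent of the literature and, moreover, an explicit description of how holes of non-unimodular Lawrence configurations arise (one for each non-integral vertex of a box fiber), which is in the spirit of the paper's Section \ref{sec:minimal}; the paper's citation buys brevity and defers exactly this argument to \cite{ohsugi-hibi2010}.
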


	Combining Propositions \ref{lawrenceform} and \ref{lawrencenormalmatrix} gives the following theorem.
	
	\begin{thm}\label{bigfacettheorem}
		Let $\Lambda(\calc)$ be a simplicial complex of Lawrence type.
		Then $\Lambda(\calc)$ is normal if and only if $\calc$ is unimodular.
	\end{thm}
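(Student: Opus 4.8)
The plan is to translate the statement about the complex $\Lambda(\calc)$ into a statement about the matrix $\Lambda(\mathcal{A}_\calc)$ and then chain the two preceding propositions together. By the abuse of language fixed at the start of the section, ``$\Lambda(\calc)$ is normal'' means ``$\mathcal{A}_{\Lambda(\calc)}$ is normal'' and ``$\calc$ is unimodular'' means ``$\mathcal{A}_\calc$ is unimodular.'' First I would use Proposition \ref{lawrenceform} to rewrite $\mathcal{A}_{\Lambda(\calc)}$ as $\Lambda(\mathcal{A}_\calc)$; this matrix is of Lawrence type by construction, since its kernel is $\{(u,-u) : u \in \ker \mathcal{A}_\calc\}$. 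Proposition \ref{lawrencenormalmatrix} then applies verbatim and says that $\Lambda(\mathcal{A}_\calc)$ is normal if and only if it is unimodular. At this point the theorem is equivalent to the single assertion that $\Lambda(\mathcal{A}_\calc)$ is unimodular if and only if $\mathcal{A}_\calc$ is unimodular; this is the only claim not already packaged in the two cited propositions, and it is where the real work lies. I would isolate it as a lemma: for any integer matrix $A$, the Lawrence lifting $\Lambda(A)$ is unimodular if and only if $A$ is.

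To prove this lemma I would argue directly from Definition \ref{def:unimodular} using the block form of $\Lambda(A)$ recorded in the definition of the Lawrence lifting. Writing the variables as a pair $(x,y)$ and the right-hand side as $b = (b_1, b_2, c)$, the system defining $P_{\Lambda(A), b}$ reads $Ax = b_1$, $Ay = b_2$, $x + y = c$ with $x, y \ge 0$. Eliminating $y = c - x$ identifies this polyhedron affinely with the box-constrained fiber $Q = \{x : 0 \le x \le c,\ Ax = b_1\}$, and since $c$ is integral a vertex of $P_{\Lambda(A), b}$ is integral exactly when the corresponding vertex of $Q$ is. Thus unimodularity of $\Lambda(A)$ amounts to integrality of all vertices of every such box fiber of $A$, as $(b_1, c)$ ranges over the data arising from $\zz\Lambda(A) \cap \rr_{\ge 0}\Lambda(A)$. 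For the direction needed to prove ``if'' in the theorem, I would show that $A$ unimodular forces these box fibers to have integral vertices: at a vertex $x^*$ of $Q$ the free coordinates (those strictly between their bounds) index linearly independent columns of $A$ and solve $A_B x^*_B = b_1 - \sum_{i : x^*_i = c_i} c_i A_{\cdot i}$, whose right-hand side lies in $\zz A \cap \rr_{\ge 0} A$, so integrality follows by applying unimodularity of $A$ to this reduced fiber. Routing this through Proposition \ref{uninormal} then yields that $\Lambda(\mathcal{A}_\calc)$ is normal, giving the ``if'' half without re-invoking Proposition \ref{lawrencenormalmatrix}. For the converse I would specialize $c$ componentwise large, so that the upper bound $x \le c$ becomes inactive and $Q$ captures every vertex of the ordinary fiber $\{x \ge 0 : Ax = b_1\}$; the point to verify is that such large integral $c$ can be completed to admissible Lawrence data, which it can by taking the second block $z_2$ large and nonnegative.

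The main obstacle is this lemma, and within it the bookkeeping of which right-hand sides are admissible. The naive hope that unimodularity of $\Lambda(A)$ reduces to ``all maximal minors of $\Lambda(A)$ agree in absolute value'' fails once $A$ has redundant rows or $\zz A$ is a proper sublattice of $\zz^{\rank A}$: for instance $A = (2)$ is unimodular, yet the $3 \times 2$ matrix $\Lambda(A)$ has maximal minors of absolute value $2$ and $4$. The correct argument must therefore respect that $b$ ranges only over $\zz\Lambda(A) \cap \rr_{\ge 0}\Lambda(A)$ rather than over all integer vectors, which is precisely what the polyhedral reduction above keeps track of. It may alternatively be possible to cite the preservation of unimodularity under Lawrence lifting directly from the literature on Lawrence ideals, in which case the theorem becomes an immediate consequence of Propositions \ref{lawrenceform} and \ref{lawrencenormalmatrix}.
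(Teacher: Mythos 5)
Your proof is correct, and it diverges from the paper's at precisely the one step where real content is needed. Both arguments share the same skeleton: Proposition \ref{lawrenceform} converts the statement about $\Lambda(\calc)$ into one about the matrix $\Lambda(\mathcal{A}_\calc)$, and Proposition \ref{lawrencenormalmatrix} converts normality of that Lawrence-type matrix into unimodularity (your routing of the ``if'' direction through Proposition \ref{uninormal} instead is an immaterial variation). At that point the paper simply cites Proposition 3.14 of \cite{bernstein-sullivant2015} for the equivalence ``$\Lambda(\calc)$ unimodular if and only if $\calc$ unimodular,'' whereas you prove the general matrix lemma --- $\Lambda(A)$ unimodular if and only if $A$ unimodular, in the fiber sense of Definition \ref{def:unimodular} --- from scratch; your closing remark that one might instead cite preservation of unimodularity under Lawrence lifting describes exactly what the paper does. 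Your polyhedral proof of the lemma is sound: eliminating $y = c - x$ identifies $P_{\Lambda(A),b}$ with the box fiber $Q = \{x : 0 \le x \le c,\ Ax = b_1\}$ (the constraint $Ay = b_2$ is implied because admissible data satisfy $Ac = b_1 + b_2$), the free coordinates of a vertex of $Q$ index independent columns and solve a reduced fiber of $A$ over a right-hand side lying in $\zz A \cap \rr_{\ge 0}A$, and for the converse a componentwise-large integral $c$ makes every vertex of $\{x \ge 0 : Ax = b_1\}$ a vertex of $Q$. The only imprecision is in the converse's admissibility check: what puts $(b_1, Ac - b_1, c)$ into $\rr_{\ge 0}\Lambda(A)$ is not nonnegativity of the integral block $z_2 = c - z_1$ (the integral preimage $z_1$ of $b_1$ may have negative entries), but rather $c \ge r$ for some nonnegative real $r$ with $Ar = b_1$, e.g.\ a vertex of the fiber; your choice of large $c$ delivers this, so the issue is purely one of phrasing. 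What your route buys is self-containedness and generality: the lemma holds for arbitrary integer matrices under the fiber definition, and your $A = (2)$ example correctly pinpoints why a naive ``equal maximal minors'' argument would fail. What the paper's route buys is brevity, outsourcing the lemma to the authors' prior classification of unimodular complexes, where it is proved once and reused.
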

	\begin{proof}
		By Propositions \ref{lawrenceform} and \ref{lawrencenormalmatrix}, we know that $\Lambda(\calc)$ must be unimodular.
		Then \cite{bernstein-sullivant2015} Proposition 3.14 gives that $\Lambda(\calc)$ is unimodular if and only if $\calc$ is unimodular.
	\end{proof}
	
	See \cite{bernstein-sullivant2015} for a complete classification of unimodular $\calc$.
	Combining this with Theorem \ref{bigfacettheorem} gives a complete classification of the normal simplicial complexes
	of Lawrence type.


\section{Normality Preserving Operations}\label{sec:operations}

	An essential ingredient in the classification results from \cite{sullivant2010} and \cite{bernstein-sullivant2015}
	was a list of simplicial complex operations that preserve normality and unimodularity, respectively.
	This section describes the operations that are known to preserve normality.
	In particular, we list some results from \cite{sullivant2010}; namely that normality is preserved 
	by taking induced sub-complexes, contracting edges, and gluing two complexes along a face.
	We then provide a new normality-preserving operation - taking links of vertices.
	For completeness sake, we also describe adding cone vertices and ghost vertices which are easily seen to preserve normality.

	\begin{defn}\label{edgecontraction}
		Let $\mathcal{C}$ be a simplicial complex on vertex set $V$ and let $L \in \mathcal{C}$.
		We define the \emph{edge contraction} to be the simplicial complex	on $\{v\} \cup V \setminus L$ by
		\[
			\calc / L := \{S \in \calc : S \cap L = \emptyset\} \cup \{(S \setminus L) \cup \{v\} : S \cap L \neq \emptyset\}.
		\]
	\end{defn}
	
	\begin{defn}
		Let $\mathcal{C}$ be a simplicial complex on vertex set $V$ and let $v \in V$.
		We define the \emph{vertex deletion} to be the simplicial complex on $V\setminus \{v\}$ by
		\[
			\calc\setminus v = \{S \in \calc : v\notin S\}.
		\]
	\end{defn}
	
	\begin{lemma}[\cite{sullivant2010}, Lemma 2.1]\label{deletioncontraction}
		Suppose that $\mathcal{C}'$ is obtained from $\mathcal{C}$ by either
		\begin{enumerate}
			\item deleting a vertex ($\calc' = \calc \setminus v$) or
			\item contracting an edge ($\calc' = \calc / L$).
		\end{enumerate}
		Then if $\mathcal{A}_{\mathcal{C},\bfd}$ is normal, so is $\mathcal{A}_{\mathcal{C}',\bfd'}$,
		where $\bfd'$ is obtained from $\bfd$ by either (1) 
		deleting $d_{v}$ where $v$ is the deleted vertex or (2) 		deleting
		all $d_{i}$ such that $i \in L$ and setting
		 $d_v = \min_{i \in L} d_i$ in the case of a edge contraction.
	\end{lemma}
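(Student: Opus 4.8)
The plan is to prove both statements by the same template: exhibit an explicit \emph{lift} $\phi \colon \rr^{\bfd'} \to \rr^{\bfd}$ of tables, show that the marginals of a lifted table under $\calc$ are completely determined by its marginals under $\calc'$, and then chase an arbitrary lattice-and-cone point of $\mathcal{A}_{\calc',\bfd'}$ through this lift to witness it as a genuine semigroup point. I first record the elementary fact that if two tables have equal marginals under every facet of a complex, then they have equal marginals under every face (each face margin is a further summation of a containing facet margin). Using this, I will argue that $\phi$ induces a well-defined injective linear map $\psi$ with $\psi(\mathcal{A}_{\calc',\bfd'}\bfu') = \mathcal{A}_{\calc,\bfd}\phi(\bfu')$, and that $\psi$ carries $\zz \mathcal{A}_{\calc',\bfd'}$ into $\zz \mathcal{A}_{\calc,\bfd}$ and $\rr_{\ge 0}\mathcal{A}_{\calc',\bfd'}$ into $\rr_{\ge 0}\mathcal{A}_{\calc,\bfd}$, by lifting integral, resp.\ nonnegative, preimages. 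Granting this, the proof finishes as follows: given $\bfh' \in \zz\mathcal{A}_{\calc',\bfd'} \cap \rr_{\ge 0}\mathcal{A}_{\calc',\bfd'}$, the point $\bfh := \psi(\bfh')$ lies in $\zz\mathcal{A}_{\calc,\bfd} \cap \rr_{\ge 0}\mathcal{A}_{\calc,\bfd}$, so by normality of $\mathcal{A}_{\calc,\bfd}$ we may write $\bfh = \mathcal{A}_{\calc,\bfd}\bfw$ for some nonnegative integral table $\bfw$; the remaining task in each case is to \emph{descend} $\bfw$ to a nonnegative integral table $\bfw'$ with $\mathcal{A}_{\calc',\bfd'}\bfw' = \bfh'$, which exhibits $\bfh'$ as a semigroup point and proves normality of $\mathcal{A}_{\calc',\bfd'}$.

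For vertex deletion ($\calc' = \calc \setminus v$, with $d_v$ deleted from $\bfd$) I take $\phi$ to be the embedding that places $\bfu'$ in the slice $i_v = 1$ and zero elsewhere. A short case analysis on a facet $F$ of $\calc$ --- distinguishing $v \notin F$, where the lifted margin reproduces the $F$-margin of $\bfu'$, from $v \in F$, where it is supported on $e_v = 1$ and reproduces the $(F \setminus v)$-margin of $\bfu'$ --- shows that the $\calc$-marginals of $\phi(\bfu')$ depend only on the face-marginals of $\bfu'$, hence only on $\bfh'$; this gives well-definedness of $\psi$, and injectivity follows because every facet of $\calc \setminus v$ is itself a face of $\calc$. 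For the descent I sum $\bfw$ over the coordinate $v$ to obtain $\bfw' \ge 0$ integral; since any facet $F'$ of $\calc \setminus v$ omits $v$ and lies in $\calc$, the $F'$-margin of $\bfw'$ equals the $F'$-margin of $\bfw$, which equals the $F'$-margin of $\phi(\bfu')$ for any representative $\bfu'$ of $\bfh'$, namely $\bfh'_{F'}$. Thus $\mathcal{A}_{\calc',\bfd'}\bfw' = \bfh'$.

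For edge contraction ($\calc' = \calc / L$, with $d_v = \min_{i \in L} d_i$) I take $\phi$ to be the diagonal embedding placing the merged table on the cells whose $L$-coordinates are all equal to the value of the merged vertex (necessarily at most $\min_{i \in L} d_i$) and zero off this diagonal. The same facet-by-facet bookkeeping --- now tracking whether a facet value is constant on $L$ --- shows the $\calc$-marginals of $\phi(\bfu')$ are governed by the $\calc/L$-face-marginals of $\bfu'$, giving a well-defined injective $\psi$. The key new idea appears in the descent. Because $L \in \calc$, the marginal of $\bfh = \mathcal{A}_{\calc,\bfd}\bfw$ under the face $L$ is determined, and it equals the $L$-marginal of the diagonal table $\phi(\bfu')$, which is supported on the diagonal of $\prod_{i \in L}[d_i]$. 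Since $\bfw \ge 0$, the vanishing of its $L$-marginal off the diagonal forces $\bfw$ itself to be supported on the diagonal, so $\bfw = \phi(\bfw')$ for a unique nonnegative integral merged table $\bfw'$. Then $\psi(\mathcal{A}_{\calc',\bfd'}\bfw') = \mathcal{A}_{\calc,\bfd}\phi(\bfw') = \bfh = \psi(\bfh')$, and injectivity of $\psi$ yields $\mathcal{A}_{\calc',\bfd'}\bfw' = \bfh'$.

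I expect the main obstacle to be the edge-contraction descent: the observation that a nonnegative integral preimage $\bfw$ of $\bfh$ is automatically concentrated on the $L$-diagonal (so that it descends to a merged table) is what makes the argument go through, and it relies essentially on $L$ being a face of $\calc$ so that the $L$-marginal is among the data recorded by $\mathcal{A}_{\calc,\bfd}$. The remaining care is bookkeeping: verifying that the $\calc$-facet marginals of a lifted table carry exactly the information of the $\calc'$-facet marginals, no more and no less, which amounts to the purely combinatorial check that each facet of $\calc'$ is contained in a face of $\calc'$ arising from a facet of $\calc$ under the given deletion or contraction.
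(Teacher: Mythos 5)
Your proof is correct, and all the key steps check out: the well-definedness and injectivity of $\psi$ follow from the facet-by-facet computations you describe (using that faces of $\calc'$ of the form $F$ with $F\cap L=\emptyset$ or $(F\setminus L)\cup\{v\}$ are genuinely faces of $\calc/L$, and that face-marginals are determined by facet-marginals), and the contraction descent is sound: since $L\in\calc$, the $L$-marginal of $\bfw$ is pinned to that of the diagonal table $\phi(\bfu')$, and nonnegativity then kills every entry of $\bfw$ whose $L$-coordinates are off-diagonal, so $\bfw=\phi(\bfw')$ as claimed. Note, however, that the paper does not prove this lemma at all --- it is quoted from \cite{sullivant2010} (Lemma 2.1) --- so the comparison is with that reference rather than with anything in this paper. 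Your argument is essentially the standard one: deletion and contraction realize $\mathcal{A}_{\calc',\bfd'}$ via the subconfiguration of columns of $\mathcal{A}_{\calc,\bfd}$ indexed by the slice $i_v=1$, respectively by the $L$-diagonal, and your diagonal-support observation is precisely the statement that this subconfiguration sits on a face of the cone $\rr_{\ge 0}\mathcal{A}_{\calc,\bfd}$, which is why normality restricts to it. Your treatment of deletion is slightly different (and slightly more robust): instead of invoking a face of the cone, you marginalize $\bfw$ over the $v$-coordinate, which works even when no supporting hyperplane isolates the slice (e.g.\ degenerate cases such as ghost vertices). You also correctly isolate where the hypotheses $L\in\calc$ and $d_v=\min_{i\in L}d_i$ enter: the first makes the $L$-marginal part of the data recorded by $\mathcal{A}_{\calc,\bfd}$ (without it the diagonal-support step fails), and the second ensures the diagonal has exactly $d_v$ cells so that $\phi$ is defined on all of $\rr^{\bfd'}$.
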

	
	\begin{defn}
		Let $\calc$ be a simplicial complex on vertex set $V$.
		If $\calc_1,\calc_2$ are simplicial complexes on subsets of $V$ and $S \subseteq V$,
		then we say that $\mathcal{C}$ has \emph{reducible decomposition} $(\calc_1,S,\calc_2)$ if
		$\calc = \calc_1\cup \calc_2$ and $\calc_1\cap\calc_2 = 2^S$.
		If $\calc$ has a reducible decomposition, we say that $\calc$ is \emph{reducible}.
	\end{defn}
	
	\begin{lemma}[\cite{sullivant2010}, Lemma 2.4]\label{reducible}
		Let $\calc$ be a reducible simplicial complex with decomposition $(\calc_1,S,\calc_2)$.
		Assume that $\bfd_1$ and $\bfd_2$ agree at all vertices in $S$.
		Let $\bfd$ be the vector obtained by identifying the corresponding entries of $\bfd_1,\bfd_2$.
		If $\mathcal{A}_{\calc_1,\bfd_1},\mathcal{A}_{\calc_2,\bfd_2}$ are normal,
		then $\mathcal{A}_{\calc,\bfd}$ is normal.
	\end{lemma}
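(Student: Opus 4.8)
The plan is to work directly with the definition of normality: I would take an arbitrary element $\bfb \in \zz\mathcal{A}_{\calc,\bfd} \cap \rr_{\ge 0}\mathcal{A}_{\calc,\bfd}$ and show it lies in $\nn\mathcal{A}_{\calc,\bfd}$, using the normality of the two pieces $\mathcal{A}_{\calc_1,\bfd_1}$ and $\mathcal{A}_{\calc_2,\bfd_2}$. The key structural fact I would exploit is that the image space of $\pi_{\calc,\bfd}$ decomposes as a direct sum over facets, and because $\calc = \calc_1 \cup \calc_2$, every facet of $\calc$ belongs to exactly one of $\calc_1$ or $\calc_2$. (Facets contained in $2^S$ would be shared, but since $\calc_1 \cap \calc_2 = 2^S$ is the full simplex on $S$, the face $S$ itself is not a facet of either piece unless it is a facet of $\calc$, and one handles that boundary bookkeeping carefully.) So I would write $\bfb = \bfb_1 \oplus \bfb_2$ according to which marginals come from $\calc_1$-facets and which from $\calc_2$-facets, and observe that $\bfb_i \in \zz\mathcal{A}_{\calc_i,\bfd_i} \cap \rr_{\ge 0}\mathcal{A}_{\calc_i,\bfd_i}$.

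First I would fix a preimage structure: given $\bfb$, choose an integral $\bfz$ with $\mathcal{A}_{\calc,\bfd}\bfz = \bfb$ and a nonnegative real $\bfr$ with $\mathcal{A}_{\calc,\bfd}\bfr = \bfb$, where $\bfz, \bfr$ are tables in $\rr^{\bfd}$. The marginals of a table onto the faces of $\calc_i$ depend only on the $\calc_i$-part of the facet data, so restricting $\bfz$ and $\bfr$ gives certificates that $\bfb_i$ lies in both the integer span and the real cone of $\mathcal{A}_{\calc_i,\bfd_i}$. Applying normality of each piece, I obtain nonnegative integer tables $\bfn_1, \bfn_2$ realizing $\bfb_1, \bfb_2$ respectively as genuine semigroup elements. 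The main step is then to \emph{glue} $\bfn_1$ and $\bfn_2$ into a single nonnegative integer table $\bfn \in \nn^{d_1 \times \cdots \times d_n}$ whose $\calc$-marginals equal $\bfb$; this glued table is the desired certificate that $\bfb \in \nn\mathcal{A}_{\calc,\bfd}$.

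The hard part will be constructing that common integral gluing, and this is exactly where the hypothesis that $\bfd_1, \bfd_2$ agree on $S$ is essential. The obstruction is a compatibility condition along the shared face $S$: both $\bfn_1$ and $\bfn_2$ induce marginal tables on $\prod_{j \in S}[d_j]$, and to splice them into a table over the combined vertex set I need these $S$-marginals to match and then to distribute the joint mass consistently over the coordinates private to $\calc_1$ versus those private to $\calc_2$. I expect this to reduce to a transportation-polytope / integral-flow argument: given matching integer margins along $S$, one produces an integral table with those prescribed margins, essentially because the marginal map over a single simplex $2^S$ is unimodular (the $S$-margin operator has the structure of a decomposable model, which is known to be normal, indeed unimodular). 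The cleanest route is to invoke the toric fiber product description: $\mathcal{A}_{\calc,\bfd}$ for a reducible complex is the toric fiber product of $\mathcal{A}_{\calc_1,\bfd_1}$ and $\mathcal{A}_{\calc_2,\bfd_2}$ over the $S$-margin, and normality is preserved under toric fiber products when the base is suitably simple. I would therefore organize the gluing step as: match the $S$-margins of $\bfn_1$ and $\bfn_2$ (possible since both equal the $S$-margin recorded in $\bfb$), and then independently refine within each private block, producing $\bfn$ whose existence follows from the integrality of the relevant transportation polytope.
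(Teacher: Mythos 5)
Your proposal is correct: splitting $\bfb$ by facets, restricting the integral and nonnegative-real certificates to each of $\calc_1,\calc_2$, invoking normality of $\mathcal{A}_{\calc_1,\bfd_1}$ and $\mathcal{A}_{\calc_2,\bfd_2}$, and then gluing the two resulting nonnegative integer tables slice-by-slice over their common $S$-margin (which match because both equal the $S$-margin determined by $\bfb$) via integrality of two-way transportation polytopes is exactly the standard proof of this gluing lemma. Note that the paper itself gives no proof here --- it imports the statement from \cite{sullivant2010}, Lemma 2.4 --- and your argument is essentially the one given in that reference, with the boundary bookkeeping (facets of $\calc_i$ contained in $2^S$) correctly flagged and harmless.
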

	
	\begin{defn}\label{defn:link}
		Let $S  \in \mathcal{C}$ be a face of $\calc$.
		Then the \emph{link} of $S$ in $\mathcal{C}$ is the new simplicial complex
		\[
			\link_S( \mathcal{C} ) = \left\{F\setminus S : F \in \mathcal{C} \mbox{ and } S \subseteq F \right\}.
		\]
		When $S = \{v\}$, we simply write $\link_v(\mathcal{C}) := \link_{\{v\}}( \calc)$.
	\end{defn}
	
	Note that we can obtain $\link_S(\mathcal{C})$ by repeatedly taking links with respect to vertices.
	That is if $S$ is a face of $\calc$ and  $\#S \ge 2$ and $v \in S$, then
	\[
		\link_S(\mathcal{C}) = \link_{v}(\link_{S\setminus\{v\}}(\mathcal{C})).
	\]
	
	We show that taking links of vertices preserves normality after
	proving establishing a definition and proving two propositions.
	
	\begin{defn}
		For $\bfv \in \kk^r$, we let $P_\bfv$ denote the matrix that projects
		onto the hyperplane orthogonal to $\bfv$:
		\[
			P_\bfv = I - \frac{\bfv \cdot \bfv^T}{\|\bfv\|^2}.
		\]
	\end{defn}
	
	\begin{prop}\label{matrixlemma}
		Let $\bfa \in \rr^r$ be a nonzero vector.
		Let $T$ be a square matrix with $k^{\rm th}$ row $\frac{\bfa^\intercal}{\|\bfa\|^2}$ such that the remaining rows
		span the hyperplane $\{\bfx : \bfa^T\bfx = 0\}$.
		Then the following are true
		\begin{enumerate}
			\item $T$ is invertible
			\item $T\bfa = e_k$
			\item {$P_\bfa = T^{-1}P_{e_{k}}T$.}
		\end{enumerate}
	\end{prop}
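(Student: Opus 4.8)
The plan is to verify the three claims in the order stated, since each builds on the previous. First I would establish invertibility (1) by observing that the rows of $T$ form a basis of $\rr^r$: the $k^{\rm th}$ row is a nonzero multiple of $\bfa^\intercal$, and the remaining $r-1$ rows span the hyperplane $\{\bfx : \bfa^\intercal\bfx = 0\}$, which is the orthogonal complement of $\bfa$. Since $\bfa$ does not lie in its own orthogonal complement (as $\bfa \neq 0$), the full collection of rows is linearly independent, so $T$ has full rank and is invertible.

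For claim (2), I would compute $T\bfa$ row by row. The $k^{\rm th}$ entry of $T\bfa$ is $\frac{\bfa^\intercal \bfa}{\|\bfa\|^2} = \frac{\|\bfa\|^2}{\|\bfa\|^2} = 1$. Every other row $\bfw^\intercal$ lies in the hyperplane orthogonal to $\bfa$, so $\bfw^\intercal \bfa = 0$. Hence $T\bfa = e_k$, the $k^{\rm th}$ standard basis vector.

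Claim (3) is the substantive one, and I would prove it by showing the two projections agree on a spanning set, namely $\bfa$ together with any basis of the hyperplane $\bfa^\perp$. The key computation is to track how $T$ and $T^{-1}$ move $\bfa$ and its orthogonal complement. By (2) we have $T\bfa = e_k$, so $\bfa = T^{-1}e_k$; applying $P_{e_k}$ kills $e_k$, giving $T^{-1}P_{e_k}T\bfa = T^{-1}P_{e_k}e_k = 0$, which matches $P_\bfa \bfa = 0$. For a vector $\bfx$ orthogonal to $\bfa$, I would check that $T\bfx$ has vanishing $k^{\rm th}$ coordinate (this follows because the $k^{\rm th}$ row of $T$ is proportional to $\bfa^\intercal$, so the $k^{\rm th}$ coordinate of $T\bfx$ is $\frac{\bfa^\intercal\bfx}{\|\bfa\|^2} = 0$), hence $P_{e_k}$ fixes $T\bfx$ and $T^{-1}P_{e_k}T\bfx = \bfx = P_\bfa \bfx$. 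Since both $P_\bfa$ and $T^{-1}P_{e_k}T$ act identically on $\bfa$ and on all of $\bfa^\perp$, and these together span $\rr^r$, the two matrices are equal.

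The main obstacle I anticipate is keeping the bookkeeping of coordinates straight in claim (3): one must be careful that $T^{-1}$ is only known implicitly, so the argument should avoid computing $T^{-1}$ explicitly and instead exploit the decomposition $\rr^r = \rr\bfa \oplus \bfa^\perp$, checking equality of the two matrices on each summand. The critical observation making this clean is that $T$ sends $\bfa^\perp$ into $e_k^\perp$ (the hyperplane of vectors with zero $k^{\rm th}$ coordinate), which is exactly the subspace that $P_{e_k}$ fixes; this mirrors $P_\bfa$ fixing $\bfa^\perp$, and is what forces the conjugation identity to hold.
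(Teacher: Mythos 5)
Your proof is correct, but your argument for part (3) takes a genuinely different route from the paper's. The paper proves (3) by a direct matrix computation: after assuming $k=1$ without loss of generality, it observes that $P_{e_1}T$ is simply $T$ with its top row replaced by zeros, i.e. $P_{e_1}T = T - e_1\frac{\bfa^\intercal}{\|\bfa\|^2}$, and that the first column of $T^{-1}$ is $\bfa$ (equivalently $T^{-1}e_1 = \bfa$, which is claim (2) read backwards); left-multiplying by $T^{-1}$ then gives $T^{-1}P_{e_1}T = I - \frac{\bfa\,\bfa^\intercal}{\|\bfa\|^2} = P_\bfa$ in one line. You instead verify that the two linear maps $P_\bfa$ and $T^{-1}P_{e_k}T$ agree on the direct sum decomposition $\rr^r = \rr\bfa \oplus \bfa^\perp$, using that $T$ carries $\bfa$ to $e_k$ and carries $\bfa^\perp$ into $e_k^\perp$. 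The paper's computation is shorter and produces the closed formula for $P_\bfa$ immediately, but it requires identifying a column of $T^{-1}$; your argument never needs any explicit information about $T^{-1}$ beyond $T^{-1}T = I$, and it makes the geometric reason for the conjugation identity transparent: $T$ intertwines the splitting $\rr\bfa \oplus \bfa^\perp$ with the splitting $\rr e_k \oplus e_k^\perp$, which are exactly the kernel-plus-fixed-space decompositions of the two projections. You also spell out parts (1) and (2), which the paper dismisses as clear; your verifications of both are correct.
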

	\begin{proof}
		The first two properties are clear so we proceed to prove the third.
		Without loss of generality, assume $k = 1$ and so {$\frac{\bfa^\intercal}{\|\bfa\|^2}$} is the top row of $T$.
		Since the first row $\frac{\bfa^\intercal}{\|\bfa\|^2}$ of $T$ is orthogonal to the rest,
		the first column of $T^{-1}$ must be $\bfa$.
		Note that the matrix $P_{e_1}T$ is $T$ with the top row replaced by $0$s.
		Therefore
		\[
			T^{-1}P_{e_1}T = I - \frac{\bfa\cdot\bfa^\intercal}{\|\bfa\|^2}
		\]
		which is the definition of $P_\bfa$.
	\end{proof}
	
	\begin{prop}\label{projection}
		Let $A \in \rr^{r\times n}$ be a normal matrix with columns $\{{\bf a}_i\}_{i=1}^n$.
		Then {$P_{a_n}A$} is normal.
	\end{prop}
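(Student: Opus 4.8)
The plan is to reduce the statement to the special case where $P_{a_n}$ is replaced by the coordinate projection $P_{e_k}$, which simply zeroes out a row, and then to exploit the fact that after this reduction the projected‑away direction $e_k$ is itself one of the columns of the transformed matrix. First I would invoke Proposition \ref{matrixlemma} with $\bfa = a_n$ to obtain an invertible matrix $T$ with $Ta_n = e_k$ and $P_{a_n} = T^{-1}P_{e_k}T$. Because $T$ is an invertible linear map, it carries $\zz M$, $\rr_{\ge 0}M$, and $\nn M$ onto $\zz(TM)$, $\rr_{\ge 0}(TM)$, and $\nn(TM)$ respectively, and hence sends holes of $M$ bijectively to holes of $TM$; thus $M$ is normal if and only if $TM$ is normal. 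Applying this twice — once to $A$ and once to $P_{a_n}A = T^{-1}P_{e_k}(TA)$ — reduces the problem to showing that $P_{e_k}B$ is normal, where $B := TA$ is normal and, crucially, has $n$th column $b_n = Ta_n = e_k$.

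Writing $\pi := P_{e_k} = I - e_k e_k^\intercal$, the map $\pi$ is exactly the linear map that zeroes out the $k$th coordinate, so $B' := \pi B$ is $B$ with its $k$th row set to zero. To test normality of $B'$, I would take an arbitrary $h \in \zz B' \cap \rr_{\ge 0}B'$ and aim to produce $w \in \nn^n$ with $h = B'w$. By definition there are $z \in \zz^n$ and $r \in \rr^n_{\ge 0}$ with $h = \pi Bz = \pi Br$, and note $h_k = 0$, so $\pi h = h$. The strategy is to lift $h$ to an element of $\zz B \cap \rr_{\ge 0}B$ lying in the fiber $\pi^{-1}(h)$, apply normality of $B$ there, and then project back down by $\pi$.

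The heart of the argument — and the step where the structure of $B$ is essential — is the production of this lift. Set $\tilde h_0 := Bz \in \zz B$; then $\tilde h_0 = h + (\tilde h_0)_k e_k$ projects to $h$, and since $e_k = b_n$ is a column of $B$ we have $\tilde h_0 + m e_k = B(z + m e_n) \in \zz B$ for every $m \in \zz$. Likewise $Br \in \rr_{\ge 0}B$ projects to $h$, and because $e_k = b_n$ is a column, the set $J := \{\mu \in \rr : h + \mu e_k \in \rr_{\ge 0}B\}$ is closed under adding nonnegative reals and hence is an infinite half‑line. Therefore $J$ meets the coset $(\tilde h_0)_k + \zz$: choosing $m \in \zz$ with $\mu^* := (\tilde h_0)_k + m \in J$ gives a point $\tilde h := h + \mu^* e_k$ lying in both $\zz B$ and $\rr_{\ge 0}B$. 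Normality of $B$ then yields $w \in \nn^n$ with $\tilde h = Bw$, and applying $\pi$ gives $h = \pi\tilde h = \pi Bw = B'w \in \nn B'$, as desired. The main obstacle is precisely that a naive row deletion need not preserve normality on its own; what rescues the argument is that the removed direction $e_k$ survives as a column of $B$, supplying a recession direction that turns the relevant fiber into a half‑line and thereby guarantees an integral lift inside the cone.
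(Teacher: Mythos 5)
Your proof is correct and follows essentially the same strategy as the paper's: both use Proposition \ref{matrixlemma} together with the fact that invertible linear maps preserve normality to reduce to the case of a coordinate projection applied to a normal matrix whose last column is $e_k$, and both then lift an element of $\zz (P_{e_k}B) \cap \rr_{\ge 0}(P_{e_k}B)$ into $\zz B \cap \rr_{\ge 0}B$ by adding a suitable multiple of $e_k$ (possible precisely because $e_k$ is a column, hence a recession direction), apply normality of $B$, and project back down. The only difference is organizational: the paper proves the coordinate-projection special case first and then performs the change-of-basis reduction, whereas you reduce first; your ``half-line meets the integer coset'' phrasing is just a cleaner packaging of the paper's explicit choice of the integer $d$ and real $e$ in its lifting step.
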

	\begin{proof}
		First we show that the proposition is true when $A$ is of the following form
		\begin{equation}\label{eq:easy}
			\left(\begin{array}{ccc|c}
				t_1 & \cdots & t_{n-1} & t_n \\ \hline
				\multicolumn{3}{c}{\multirow{3}{*}{\raisebox{-7mm}{\scalebox{1}{{$B$}}}}} \vline&0 \\ 
				& & & \raisebox{2pt}{\vdots}\\
				& & & 0
	 		\end{array}\right)
		\end{equation}
		where $t_n \ge 0$.
		{If $t_n = 0$, then $P_{a_n}$ is the identity so $P_{a_n}A = A$ which is normal.
		So assume $t_n > 0$.
		Let $\binom{0}{\bfb} \in \zz P_{a_n}A \cap \rr_{\ge0}P_{a_n}A$ (note that $P_{a_n}A$ has zeros along the top row).}
		So there exist $\bfz \in \zz^{n-1}$ and $\bfr \in \rr^{n-1}_{\ge 0}$ such that $B\bfz = B\bfr = \bfb$.
		We lift $\bfb$ to $\hat{\bfb} \in \zz A \cap \rr_{\ge 0} A$ as follows.
		Define $\bft = (t_1,\dots,t_{n-1})$.
		Choose $d \in \zz$ such that $\langle \bft, \bfz \rangle - \langle \bft, \bfr \rangle + dt_n > 0$.
		Define
		\[
			\hat\bfb := A \begin{pmatrix}\bfz \\ d\end{pmatrix}
		\]
		so $\hat \bfb \in \zz A$.
		Define
		\[
			e := \frac{\langle \bft, \bfz \rangle - \langle \bft, \bfr \rangle + dt_n}{t_n}
		\]
		and so $\hat \bfb$ is in $\rr_{\ge 0} A$ because
		\[
			\hat\bfb := A \begin{pmatrix}\bfr \\ e\end{pmatrix}.
		\]
		Since $A$ is normal, there exists $\bfn \in \nn^n$ such that $A\bfn = \hat \bfb$.
		{ Then
		$P_{a_n}A\bfn = \binom{0}{\bfb}$ and so $\binom{0}{\bfb}$ is not a hole of ${P_{a_n}}A$.}
		So ${P_{a_n}}A$ is normal.
		\\
		\indent
		Now we let $\bfa_n$ be an arbitrary nonzero vector.
		If $T$ is an invertible matrix, then $TA$ is normal.
		To see this, note that the solution sets to the equations $TA\bfx = \bfb$ and $A\bfx = T^{-1}\bfb$ are identical.
		So $\bfb$ is a hole of $TA$ if and only if $T^{-1} \bfb$ is a hole of $A$.
		Let $T$ be a matrix whose first row is $\frac{\bfa_n^\intercal}{\|\bfa\|^2}$ and whose remaining rows are orthogonal to $\bfa_n^\intercal$.
		Since $T$ is invertible, $TA$ is normal.
		Furthermore, Proposition \ref{matrixlemma} (2) implies that the last column of $TA$ is $e_1^\intercal$.
		So by previous arguments, ${P_{a_n}}TA$ is also normal.
		Therefore, so is $T^{-1}{P_{a_n}}TA$.
		This is equal to $P_{\bfa_n}A$ by Proposition \ref{matrixlemma} (3).
	\end{proof}
	
	\begin{thm}\label{link}
		Assume $\mathcal{A}_{\mathcal{C},{\bf d}}$ is normal.
		Let $v$ be a vertex of $\mathcal{C}$ and let ${\bf d}'$ denote the vector obtained by deleting $d_{v}$ from ${\bf d}$.
		Then $\mathcal{A}_{\link_v{\mathcal{C}},{\bf d}'}$ is normal.
	\end{thm}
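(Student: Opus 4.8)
The plan is to realize $\mathcal A_{\link_v\mathcal C,\mathbf d'}$ as a submatrix of $\mathcal A_{\mathcal C,\mathbf d}$ and then to reach it by operations that preserve normality, the workhorse being Proposition \ref{projection}. First I would fix notation: assume $v=n$, and record the combinatorial fact that $F\mapsto F\setminus\{v\}$ is a bijection between the facets of $\mathcal C$ containing $v$ and the facets of $\link_v\mathcal C$ (both inclusions follow from maximality, as in Definition \ref{defn:link}); vertices of $[n-1]$ not appearing in the link are ghost vertices and may be ignored. Under this bijection one checks directly from the description of $\mathcal A_{\mathcal C,\mathbf d}$ that the submatrix on the rows $(F,e)$ with $v\in F$ and $e_v=1$ and the columns $\mathbf i$ with $i_v=1$ is exactly $\mathcal A_{\link_v\mathcal C,\mathbf d'}$: a cell supported on the slice $i_v=1$ restricts, facet by facet, to the corresponding $(F\setminus\{v\})$-marginal. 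It is worth noting at the outset why a direct argument fails: given a putative hole $\mathbf h$ of the link, its integral certificate and its nonnegative real certificate generally disagree on the marginals to the facets of $\mathcal C$ that omit $v$, so they do not assemble into a single element of $\mathbb Z\mathcal A_{\mathcal C,\mathbf d}\cap\mathbb R_{\ge0}\mathcal A_{\mathcal C,\mathbf d}$.

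Next I would dispose of the columns and rows involving levels $e_v\ne1$ by a face argument. The functional $\psi$ summing the coordinates $(F,e)$ with $v\in F$ and $e_v\ne1$ is nonnegative on $\mathbb R_{\ge0}\mathcal A_{\mathcal C,\mathbf d}$ and vanishes precisely on the subcone generated by the columns with $i_v=1$; hence those columns span a face. Normality passes to the subconfiguration on a face: if $\mathbf h$ lies in the lattice and cone of the face columns then by normality of $\mathcal A_{\mathcal C,\mathbf d}$ it is a nonnegative integer combination of all columns, and since $\psi(\mathbf h)=0$ while $\psi\ge0$ on every generator, only the face columns can occur. Deleting the rows that are then identically zero leaves a normal matrix $N$ computing the marginals of an $[n-1]$-table to the facets $\{F\setminus\{v\}:v\in F\}\cup\{F':v\notin F'\}$, that is, to the link facets together with the facets of $\mathcal C$ omitting $v$.

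The remaining and essential step is to delete the rows of $N$ coming from the facets $F'$ with $v\notin F'$, and this is exactly what Proposition \ref{projection} is for. Applied repeatedly---each time after an invertible change of coordinates that places a chosen column along a coordinate axis, as in Proposition \ref{matrixlemma}---it lowers the ambient dimension one direction at a time while, as in the easy case of its proof, leaving a single free coordinate with which an integral certificate can be reconciled with a nonnegative real one. The plan is to iterate such projections (discarding zero rows and columns along the way) until every coordinate attached to a facet omitting $v$ has been removed, leaving an invertible image of $\mathcal A_{\link_v\mathcal C,\mathbf d'}$, which is therefore normal.

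The hard part will be this last deletion. The facet-omitting-$v$ coordinates cannot be removed by a further face restriction, since every cell meets every facet and so no facet block vanishes on any face; nor can they be forgotten naively, because the ``block-sum'' relations---each facet block of a marginal matrix sums to the all-ones functional---put the corresponding indicator directions outside the column space, which is precisely the obstruction underlying the failed direct argument above. Proposition \ref{projection} circumvents this by trading a genuine coordinate deletion for an orthogonal projection against a column and absorbing the discrepancy into its one free parameter. The crux is therefore to organize the sequence of projections and changes of basis so that exactly the facet-omitting-$v$ coordinates are annihilated and the surviving configuration is, up to an invertible transformation and zero rows, the link matrix.
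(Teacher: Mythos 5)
Your first two steps are sound and match the paper's setup: the block structure you describe is exactly the one the paper imports from Lemma 2.2 of \cite{hosten-sullivant2007} (the columns with $i_v=1$ meet the rows $(F,e)$ with $v\in F$, $e_v=1$ in a copy of $\mathcal{A}_{\link_v\mathcal{C},\mathbf{d}'}$, and meet the rows of facets omitting $v$ in $B=\mathcal{A}_{\mathcal{C}\setminus v,\mathbf{d}'}$), and restricting a normal configuration to the face of its cone cut out by your functional $\psi$ does preserve normality. The gap is in your third step, and it is fatal to this route. Proposition \ref{projection} only allows projection orthogonal to a \emph{column of the matrix at hand}. After your face restriction, the surviving columns are the vectors $\binom{\mathbf{a}_i}{\mathbf{b}_i}$; projecting orthogonally to such a column mixes the link coordinates with the facet-omitting-$v$ coordinates and does not annihilate the $B$-rows, and the columns you would actually need to project against --- those with $i_v\neq 1$, which are supported entirely on the coordinates you want to kill --- are precisely the ones your face restriction discarded. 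Nor can the missing step be supplied by a general principle: for arbitrary matrices, normality of $\binom{A}{B}$ does \emph{not} imply normality of $A$ (take $B$ to be an identity matrix; then $\binom{A}{I}$ is normal for every $A$, since membership in its cone and lattice forces the coefficient vector to equal the bottom coordinates, which are then nonnegative integers). So deleting the $B$-rows requires a genuine structural argument, and your closing paragraph concedes you do not have one; that concession is exactly where the proof is missing.

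The paper's proof avoids this trap by performing the projection \emph{before} any columns are removed. Keep the whole matrix $\mathcal{A}_{\mathcal{C},\mathbf{d}}$ and apply Proposition \ref{projection} repeatedly, projecting orthogonally to the span of all $(d-1)n$ columns with $i_v\neq 1$; each step projects against a column of the current matrix, so the proposition applies and normality is preserved throughout. Since those columns vanish on the rows $(F,e)$ with $v\in F$ and $e_v=1$, the projected matrix $\mathcal{A}'$ still reads $\begin{pmatrix} A & \mathbf{0} & \cdots & \mathbf{0}\end{pmatrix}$ in those $m$ rows. A rank count then finishes the argument: assuming full row rank throughout, the projected-out columns span a space of dimension $(d-1)m+l$, so $\mathcal{A}'$ has rank $(dm+l)-\bigl((d-1)m+l\bigr)=m$, which is already achieved by the top $m$ rows; the remaining rows are linear combinations of them and may be deleted without affecting normality, leaving $\begin{pmatrix} A & \mathbf{0} & \cdots & \mathbf{0}\end{pmatrix}$ and hence the normality of $A=\mathcal{A}_{\link_v\mathcal{C},\mathbf{d}'}$. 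In short: projection against the columns with $i_v \neq 1$ is the mechanism that kills the facet-omitting-$v$ coordinates; your plan throws those columns away first and is then left with no tool to do the killing.
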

	\begin{proof}
		Let $A = \mathcal{A}_{\link_v{\mathcal{C}},{\bf d}'}$ and let $B = \mathcal{A}_{\mathcal{C}\setminus v, {\bf d}'}$.
		By Lemma 2.2 in \cite{hosten-sullivant2007} and the following remark, we can write
		\[
			\mathcal{A}_{\mathcal{C},{\bf d}} =\left[\begin{matrix}
							A &  {\bf 0}  & \ldots & {\bf 0}\\
							{\bf 0}  &  A & \ldots & {\bf 0}\\
							\vdots & \vdots & \ddots & \vdots\\
							{\bf 0}  &   {\bf 0}       &\ldots & A\\
							B & B & \dots & B
						\end{matrix}\right].
		\]
		Assume $A \in \rr^{m \times n}$ and $B \in \rr^{l \times n}$ and so $\mathcal{A}_{\mathcal{C},{\bf d}} \in \rr^{dm+l, dn}$.
		Without loss of generality, we may assume that $A,B,\mathcal{A}_{\mathcal{C},{\bf d}}$ all have full row rank.
		Let $\mathcal{A}'$ denote the matrix that results when we project $\mathcal{A}_{\mathcal{C},{\bf d}}$
		onto the subspace orthogonal to the last $(d-1)n$ columns.
		{By Proposition \ref{projection}, $\mathcal{A}'$ is normal.}
		Then if $1 \le i \le n$, the $i$th column of $\mathcal{A}'$ can be expressed as the $i$th column of $\mathcal{A}_{\mathcal{C},{\bf d}}$
		minus a linear combination of the last $(d-1)n$ columns of $\mathcal{A}_{\mathcal{C},{\bf d}}$,
		all of which are $0$ in the top $m$ rows.
		So this means that the top $m$ rows of $\mathcal{A}'$ are
		\[
			\begin{pmatrix} A & {\bf 0} & \dots & {\bf 0} \end{pmatrix}.
		\]
		Furthermore, since $A$ and $\mathcal{A}_{\mathcal{C},{\bf d}}$ both have full row rank,
		the final $(d-1)n$ columns of $\mathcal{A}_{\mathcal{C},{\bf d}}$ have rank $(d-1)m+l$.
		Therefore $\mathcal{A}'$ has rank $m$.
		Since $A$ also has rank $m$, we may delete the bottom $(d-1)m+l$ rows of $\mathcal{A}'$ without affecting the rowspace
		and therefore normality.
		So the matrix $\begin{pmatrix} A & {\bf 0} & \dots & {\bf 0} \end{pmatrix}$, and therefore $A$, is normal.
	\end{proof}
	
	For completeness sake, we give two very simple normality-preserving constructions.
	
	\begin{defn}
        	Let $\mathcal{C}$ be a simplicial complex on vertex set $[n]$. Then we define $\cone(\mathcal{C})$
        	to be the simplicial complex on $[n+1]$ with the following facets
        	\[
        		\{F \cup \{n+1\} : F \text{ is a facet of } \mathcal{C} \}
        	\]
		and we refer to this complex as a \emph{cone over $\mathcal{C}$} with $n+1$ as the \emph{cone vertex}.
	\end{defn}

        \begin{defn}
        	Let $\mathcal{C}$ be a simplicial complex on ground set $[n]$.
        	Let $G(\mathcal{C})$ denote the same simplicial complex but on ground set $[n+1]$.
        	Note that the vertex $n+1$ is not contained in any face of $G(\mathcal{C})$.
        	In this case we say that { $n+1$} is a \emph{ghost vertex}.
	\end{defn}
	
	\begin{prop}
		Let $\mathcal{C}$ be a simplicial complex on $[n]$ and let $\mathcal{C}'$ be $\cone(\mathcal{C})$ or $G(\mathcal{C})$.
		Let $\bfd$ be an integer vector of length $n$, let $\bfd'$ be { an extension of $\bfd$} of length $n+1$.
		Then $\mathcal{A}_{\calc,\bfd}$ is normal if and only if $\mathcal{A}_{\calc',\bfd'}$ is normal.
	\end{prop}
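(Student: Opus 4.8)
The plan is to show in each case that $\mathcal{A}_{\calc',\bfd'}$ is assembled from $\mathcal{A}_{\calc,\bfd}$ in a way that manifestly preserves the three objects $\zz A$, $\rr_{\geq 0}A$, and $\nn A$ that govern normality, so that $\mathcal{A}_{\calc,\bfd}$ is normal if and only if $\mathcal{A}_{\calc',\bfd'}$ is. I would treat the ghost vertex and cone constructions separately, since they produce different matrix structures.

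First I would dispatch the ghost vertex case. Since the new vertex $n+1$ lies in no facet of $G(\calc)$, the facets of $G(\calc)$ and their marginals coincide with those of $\calc$, so the rows of $\mathcal{A}_{G(\calc),\bfd'}$ are indexed by the same pairs $(F,e)$ as the rows of $\mathcal{A}_{\calc,\bfd}$. The only effect of passing to $\bfd'$ is that the columns are now indexed by $\prod_{j\in[n+1]}[d_j]$. Because the entry in row $(F,e)$ and column $i$ depends only on the coordinates $(i_j:j\in F)$ and no facet involves coordinate $n+1$, the column indexed by $i$ depends only on $(i_1,\dots,i_n)$. Hence $\mathcal{A}_{G(\calc),\bfd'}$ is precisely $\mathcal{A}_{\calc,\bfd}$ with every column repeated $d_{n+1}$ times. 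Repeating columns leaves the multiset of column vectors spanning the lattice, cone, and semigroup unchanged, so $\zz A$, $\rr_{\geq 0}A$, and $\nn A$ are identical for the two matrices, and they are simultaneously normal or not.

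Next I would treat the cone. Here each facet of $\cone(\calc)$ has the form $F\cup\{n+1\}$ for $F$ a facet of $\calc$, so the row indexed by $(F\cup\{n+1\},(e_F,k'))$ has a $1$ in column $i$ exactly when $(i_j:j\in F)=e_F$ and $i_{n+1}=k'$. Ordering the columns by the value $i_{n+1}\in[d_{n+1}]$ and the rows by the value $e_{n+1}\in[d_{n+1}]$, I claim this exhibits $\mathcal{A}_{\cone(\calc),\bfd'}$ as a block diagonal matrix with $d_{n+1}$ diagonal blocks, each equal to $\mathcal{A}_{\calc,\bfd}$: a row with $e_{n+1}=k'$ is supported only in the column block $i_{n+1}=k'$, where it restricts to the corresponding row of $\mathcal{A}_{\calc,\bfd}$. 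The step to get right is verifying this block structure directly from the definition of the matrix while keeping the row and column indexing straight.

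Finally I would record the elementary fact that a block diagonal matrix $A''=\mathrm{diag}(A,\dots,A)$ with $s$ equal blocks is normal if and only if $A$ is. This holds because the lattice, cone, and semigroup of $A''$ are the $s$-fold direct products of those of $A$, so that $\zz A''\cap\rr_{\geq 0}A''=(\zz A\cap\rr_{\geq 0}A)^s$ equals $\nn A''=(\nn A)^s$ precisely when $\zz A\cap\rr_{\geq 0}A=\nn A$. Applying this with $s=d_{n+1}$ finishes the cone case and hence the proposition. No step here is genuinely difficult; the only real care is the bookkeeping that establishes the repeated-column description in the ghost vertex case and the block diagonal description in the cone case.
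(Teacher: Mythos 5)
Your proposal is correct and takes essentially the same route as the paper: the paper's proof also identifies $\mathcal{A}_{\cone(\calc),\bfd'}$ as a block diagonal matrix with $d_{n+1}$ copies of $\mathcal{A}_{\calc,\bfd}$ and $\mathcal{A}_{G(\calc),\bfd'}$ as $\mathcal{A}_{\calc,\bfd}$ with its columns repeated, then concludes normality is preserved. You merely spell out the bookkeeping and the product/repeated-column argument for both directions of the equivalence, which the paper leaves implicit.
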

	\begin{proof}
		If $\calc' = \cone(\calc)$ then
		\[
			\mathcal{A}_{\calc',\bfd'} = \left(\begin{array}{c|c|c|c}
			\mathcal{A}_{\calc,\bfd} & {\bf 0} & \dots & {\bf 0} \\ \hline
			{\bf 0} & \mathcal{A}_{\calc,\bfd}& \dots & {\bf 0} \\ \hline
			\vdots & \vdots & \vdots & \vdots \\ \hline
			{\bf 0} & \dots & {\bf 0} & \mathcal{A}_{\calc,\bfd}
			\end{array}\right).
		\]
		If $\calc' = G(\calc)$ then
		\[
			\mathcal{A}_{\calc',\bfd'} = \begin{pmatrix}\mathcal{A}_{\mathcal{C},\bfd} & \dots & \mathcal{A}_{\mathcal{C},\bfd}\end{pmatrix}. \]
Both of these matrices are normal if $A_{\calc, \bfd}$ is.
	\end{proof}

As a final tool for proving normality of a complex,
we describe a condition on the polyhedral cone {$\rr_{\geq 0}\mathcal{A}_{\calc}$}
that allows us to remove a facet of a normal complex $\calc$ and retain normality.
To describe this construction, which is a special
case of Lemma { 2} in \cite{sullivant2010}, we need an alternate
representation of the matrix $\cala_{\calc}$.

\begin{defn}[Full-rank representation of $\cala_{\calc}$]\label{def:fullrank}
	Let $\calc$ be a simplicial complex on the set $\{1,2,\dots,n\}$.
	For each binary $n$-tuple $\bfb = b_1b_2\dots b_n \in \{0,1\}^n$
	we define the column vector $\bfa_\bfb$ whose entries are indexed by the faces of $\calc$.
	For each face $F$ of $\calc$, we define $\bfa_\bfb(F) = 1$ if restricting $\bfb$ to the indices given by $F$ contains no $1$s;
	otherwise we define $\bfa_\bfb(F) = 0$.
	The matrix $A$ whose columns are the set $\{\bfa_\bfb\}_{\bfb\in\{0,1\}^n}$ has the same toric ideal as $\cala_\calc$.
	In particular, $A$ is normal if and only if $\cala_\calc$ is normal.
\end{defn}
See \cite{sullivant2010} for more details about the above construction, including proof of correctness and generalization to arbitrary $\bfd$.
For the rest of this section, we let $\cala_\calc$ denote the matrix described in Definition \ref{def:fullrank}.
We now illustrate this construction with an example.
\begin{ex}
	Let $\calc = [1][23]$ on ground set $\{1,2,3\}$.
	Then the faces of $\calc$ are $\{\emptyset,\{1\},\{2\},$  $\{3\},\{2,3\}\}$.
	Then $\cala_\calc$ can be represented as follows
	\[
		\begin{pmatrix}
			1&1&1&1&1&1&1&1\\
			1&1&1&1&0&0&0&0\\
			1&1&0&0&1&1&0&0\\
			1&0&1&0&1&0&1&0\\
			1&0&0&0&1&0&0&0
		\end{pmatrix}.
	\]
\end{ex}

\begin{prop}\label{almost}
	Let $\calc$ be a simplicial complex and let $S$ be a minimal non-face.
	Let $\cald$ be the simplicial complex obtained from $\calc$ by including $S$ as a facet.
	We define $B$ to be the matrix for the $\mathcal{H}$-representation of $\rr_{\ge 0}\cala_\cald$;
	that is, $B$ satisfies
	\[
		\rr_{\ge 0} \cala_\cald = \{x \in \rr^d : Bx \ge 0\}.
	\]
	There is exactly one column in $B$ corresponding to the face $S$.
	If this column has entries contained in $\{0,+1,-1\}$,
	then if $\cald$ is normal, so is $\calc$.
\end{prop}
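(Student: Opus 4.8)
The plan is to realize the passage from $\cald$ to $\calc$ as the deletion of a single coordinate and then lift holes back up. Because $S$ is a \emph{minimal} non-face of $\calc$, every proper subset of $S$ already lies in $\calc$, so the faces of $\cald$ are precisely the faces of $\calc$ together with the single new face $S$. In the full-rank representation of Definition \ref{def:fullrank} the rows are indexed by faces, so $\cala_\calc$ is exactly $\cala_\cald$ with the row indexed by $S$ deleted, and the two matrices share the same columns $\{\bfa_\bfb\}_{\bfb \in \{0,1\}^n}$. Letting $\rho$ denote the projection that forgets the $S$-coordinate, we have $\rho \circ \cala_\cald = \cala_\calc$ and hence $\rr_{\ge 0}\cala_\calc = \rho(\rr_{\ge 0}\cala_\cald)$, $\zz\cala_\calc = \rho(\zz\cala_\cald)$, and $\nn\cala_\calc = \rho(\nn\cala_\cald)$. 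Thus to prove $\calc$ normal it suffices to take $\bfh \in \rr_{\ge 0}\cala_\calc \cap \zz\cala_\calc$ and produce a lift $\hat\bfh = (\bfh, t) \in \rr_{\ge 0}\cala_\cald \cap \zz\cala_\cald$: normality of $\cald$ then places $\hat\bfh$ in $\nn\cala_\cald$, and applying $\rho$ yields $\bfh \in \nn\cala_\calc$.

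Everything therefore reduces to choosing the missing $S$-coordinate $t$. Writing the rows of $B$ as inequalities and using that the $S$-column of $B$ has entries in $\{0,\pm1\}$, each row either ignores $x_S$, or (coefficient $+1$) reads $x_S \ge \ell_i$, or (coefficient $-1$) reads $x_S \le u_j$. Taking $B$ integral, as we may for a rational cone, and noting that $\bfh$ is an integer vector since the entries of $\cala_\calc$ lie in $\{0,1\}$, each bound $\ell_i,u_j$ is an integer combination of the integer entries of $\bfh$, hence itself an integer. This is exactly where the $\{0,\pm1\}$ hypothesis is used: a coefficient other than $\pm1$ would introduce a denominator and possibly a non-integer endpoint. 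Consequently the feasible set for $t$ is an interval $[L,U]$ with $L=\max_i \ell_i$ and $U=\min_j u_j$ both integers; it is nonempty because $\bfh\in\rr_{\ge0}\cala_\calc=\rho(\rr_{\ge0}\cala_\cald)$ supplies at least one real lift, and the $S$-free rows of $B$ depend only on $\bfh$ and so hold automatically. In particular $[L,U]$ contains an integer.

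It remains to ensure that an integer $t\in[L,U]$ gives a point of the lattice $\zz\cala_\cald$. Since $\bfh\in\zz\cala_\calc=\rho(\zz\cala_\cald)$, there is a lattice lift $(\bfh,a)$ with $a\in\zz$, so it is enough to show $e_S\in\zz\cala_\cald$, for then $(\bfh,t)=(\bfh,a)+(t-a)e_S\in\zz\cala_\cald$ for every integer $t$. To see this, consider the square matrix $M$ with rows indexed by all subsets $F\subseteq[n]$ and columns by all $\bfb\in\{0,1\}^n$, where $M_{F,\bfb}=1$ if the restriction of $\bfb$ to $F$ has no $1$'s and $0$ otherwise. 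Then $M$ is the $n$-fold Kronecker power of $\left(\begin{smallmatrix}1&1\\1&0\end{smallmatrix}\right)$, so $\det M=\pm1$ and $M^{-1}$ is integral. The vector $\bfw:=M^{-1}e_S$ is then an integer vector with $M\bfw=e_S$; restricting to the rows indexed by faces of $\cald$ gives $\cala_\cald\bfw=e_S$, exhibiting $e_S\in\zz\cala_\cald$. Choosing any integer $t\in[L,U]$ now yields the desired lift $\hat\bfh$, and the reduction of the first paragraph finishes the proof.

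I expect the main obstacle to be the integrality of the endpoints $L$ and $U$ in the second paragraph: this is the only place the hypothesis on the $S$-column of $B$ is genuinely needed, and it is what prevents the feasible interval from having irrational endpoints that could exclude every integer point. The lattice step is comparatively routine once the Kronecker structure of $M$ is observed, and the reduction in the first paragraph is purely formal; this whole argument is the projection (Fourier--Motzkin) special case of Lemma~2 in \cite{sullivant2010}.
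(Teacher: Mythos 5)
Your proof is correct, but it takes a genuinely different route from the paper for the simple reason that the paper gives no argument at all: its entire proof of Proposition \ref{almost} is the citation ``This is a special case of Lemma 2 in \cite{sullivant2010}.'' What you have written is, in effect, a self-contained reconstruction of that lemma in the case at hand, and the pieces all check out: minimality of $S$ gives that the faces of $\cald$ are the faces of $\calc$ plus $S$, so $\cala_\calc$ is $\cala_\cald$ with the $S$-row deleted and the reduction to lifting $\bfh$ is purely formal; the $\{0,\pm 1\}$ hypothesis on the $S$-column of $B$ is exactly what makes the Fourier--Motzkin interval $[L,U]$ for the missing coordinate have integer endpoints; and your Kronecker-power argument correctly shows $e_S \in \zz\cala_\cald$, so an integral lift exists and normality of $\cald$ finishes the job. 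The trade-off is the usual one: the paper buys brevity by outsourcing, while your version makes visible exactly where each hypothesis is used, which the citation hides. Two small improvements are possible. First, the lattice step can be shortened by citing Proposition \ref{lattice} of the paper: it gives a full-rank square submatrix of $\cala_\cald$ with determinant $1$, hence $\zz\cala_\cald = \zz^d$, and then \emph{any} integer value of the $S$-coordinate yields a lattice point, with no need for the Kronecker computation. Second, the phrase ``taking $B$ integral, as we may'' should be stated more carefully: the hypothesis of the proposition refers to a fixed matrix $B$, and rescaling rows to clear denominators could in principle destroy the $\{0,\pm1\}$ property, so one should declare at the outset that $B$ is the primitive integral facet matrix (canonical here, since the cone is full-dimensional), which is clearly the intended reading of the statement.
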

\begin{proof}
	This is a special case of Lemma { 2} in \cite{sullivant2010}.
\end{proof}

\begin{ex}
For example, for the complex $\calc $ with facets $\{1\}, \{2,3\}$,
the matrix $B$ is
$$
B = 
\begin{pmatrix}
0 & 1 & 0 & 0 & 0 \\
1 & -1 & 0 & 0 & 0 \\
0 & 0 & 0 & 0 & 1  \\
0 & 0 & 0 & 1 & -1  \\
0 & 0 & 1 & 0 & -1 \\
1 &  0 & -1 & -1 & 1
\end{pmatrix}.
$$
Applying Proposition \ref{almost} we see that the complex $\calc$ with
facets $\{1\}, \{2\}, $ and $\{3\}$ is also normal.
\end{ex}


\section{Minimally Non-Normal Binary Hierarchical Models}\label{sec:minimal}

Using normality- and unimodularity-preserving operations to obtain the classification results in \cite{sullivant2010}
	and \cite{bernstein-sullivant2015} required a complete list of the minimal non-normal and non-unimodular complexes.
	The list in \cite{sullivant2010} was particularly nice; it included just one complex.
	The list in \cite{bernstein-sullivant2015} included one infinite family and six other complexes.
	With our given list of operations (vertex deletion, edge contraction,
	reducibility, taking a cone)
	the list of minimally non-normal complexes appears to be even larger.
	This section describes our current knowledge of theoretical
	results towards describing this list.  
	In particular, we report
	on two infinite families of minimally non-normal complexes and a great
	many other complexes which may or may not fit into infinite families. 

	If $\mathcal{A}_{\calc,\bfd}$ is normal and $\bfd' \le \bfd$, then $\mathcal{A}_{\calc,\bfd'}$ is also normal.
	Therefore, a good first step towards a general classification of normal $\mathcal{A}_{\calc\,\bfd}$
	would be a classification of the case where $\bfd = (2,2,\dots, 2)$.
	We now restrict our attention to matrices $\mathcal{A}_{\calc,\bfd}$ where $\bfd = (2,2,\dots,2)$
	and so we abbreviate $\mathcal{A}_{\calc,(2,2,\dots,2)}$ by $\cala_\calc$.
	Since the matrix $\mathcal{A}_\calc$ depends entirely on the simplicial complex parameter,
	we abuse notation and say ``$\calc$ is normal'' when we mean ``$\cala_\calc$ is normal.''
	We give a list of all the complexes that we know to be minimally non-normal,
	a notion we make precise with the following definition.
	
	\begin{defn}
		Let $\mathcal{C}$ be a simplicial complex.
		We say that a non-reducible complex $\mathcal{C}$ without cone and ghost vertices
		is \emph{minimally non-normal} if $\mathcal{C}$ is not normal,
		but any edge-contraction, vertex-deletion, or link of $\mathcal{C}$ is normal.
	\end{defn}
	
	By our general setup at the beginning of Section \ref{sec:preliminaries},
	the indexing set of the columns of a matrix $\cala_\calc$ is $[2]\times[2]\times\dots\times[2]$
	and the rows are indexed according to this set.
	However, since we are working in the binary case,
	it feels more natural to index with $0-1$ strings than with $1-2$ strings,
	so we instead index with the set $\{0,1\}^n$.
	This also allows us to use the notions of Hamming weight and Hamming distance in their natural form.
	\\
	\indent
	In \cite{sullivant2010}, the second author shows that the class of normal graphs is minor-closed
	with $K_4$, the complete graph on four vertices, as the unique forbidden minor.
	Theorem \ref{example} below gives a family of non-normal simplicial complexes that generalize $K_4$ as a type of minimal forbidden minor.
	
	\begin{thm}\label{example}
		Let $\mathcal{C}$ be the simplicial complex on the $n+1$ vertices $\{0,1,\dots,n\}$
		whose facets are the set $\{0,n\}$ and all $n-2$-dimensional sets of vertices that do not include both $0$ and $n$.
		Then $\mathcal{A}_{\calc}$ is minimally non-normal.
	\end{thm}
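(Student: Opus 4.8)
The statement has two halves: that $\cala_\calc$ is not normal, and that every vertex-deletion, edge-contraction, and vertex-link of $\calc$ is normal. Throughout I would exploit the symmetry group $G = S_{\{1,\dots,n-1\}} \times \langle (0\,n)\rangle$, which acts by automorphisms of $\calc$ and therefore permutes the minors of each type; this lets me reduce the minimality check to a handful of representatives. As a sanity check, when $n = 3$ the complex $\calc$ is exactly $K_4$, whose non-normality and minimality are the base case from \cite{sullivant2010}.

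For non-normality I would produce an explicit hole $\bfh$. Using the full-rank representation of Definition \ref{def:fullrank}, the columns of $\cala_\calc$ are indexed by states $\bfb \in \{0,1\}^{n+1}$ and the rows by the faces of $\calc$; a margin vector lies in $\rr_{\ge 0}\cala_\calc$ exactly when it is the vector of $\calc$-margins of a nonnegative table, and in $\nn\cala_\calc$ exactly when that table can be taken to have nonnegative integer entries. The plan is: (i) write down a symmetric half-integral table $u^\ast$ whose $\calc$-margins are $\bfh$, giving $\bfh \in \rr_{\ge0}\cala_\calc$; (ii) exhibit an integer-valued signed table with the same margins, giving $\bfh \in \zz\cala_\calc$; and (iii) prove $\bfh \notin \nn\cala_\calc$. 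Step (iii) is the crux. I would argue by contradiction: a nonnegative integer table with margins $\bfh$ is a binary array whose projections onto the facets of $\calc$ realize prescribed counts, and I would show that the balance conditions coming from the $(n-2)$-dimensional facets, together with the $2$-way condition coming from the edge $\{0,n\}$, force design parameters that are infeasible over the integers by a counting (Rao-type) bound. This is the exact mechanism by which $K_4$ fails to be normal, where the forced object is a strength-$2$ orthogonal array on four binary factors in four runs that violates the Rao bound; the real work is to choose $\bfh$ so that the forced design is genuinely integer-infeasible for all $n$ while its real and lattice relaxations remain feasible. I expect this to be the hardest part of the proof.

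For minimality I would run through the three operations, using $G$ to cut down the cases. Deleting $0$ (equivalently $n$) yields the complex of all $(n-1)$-subsets of an $n$-set, i.e. the boundary of a simplex, normal by Theorem \ref{triangle}(1); deleting a middle vertex $i$ yields a complex on $n$ vertices with the two big facets $V\setminus\{0\}$ and $V\setminus\{n\}$ together with the edge $\{0,n\}$, which has a big facet and is of Lawrence type, so by Theorem \ref{bigfacettheorem} it is normal once one checks its vertex-link is unimodular, and that link is a simplex together with an isolated vertex. Contracting $\{0,n\}$ again gives a simplex boundary, while contracting any other edge produces a complex with a big facet, handled by Theorem \ref{bigfacettheorem}. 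Taking $\link_0(\calc)$ gives the boundary of a simplex on $\{1,\dots,n-1\}$ together with the isolated vertex $n$, which is reducible and hence normal by Lemma \ref{reducible}. The delicate case is $\link_i(\calc)$ for a middle vertex $i$: it is the complex of all $(n-2)$-subsets of an $n$-set that do not contain both $0$ and $n$, with no bridging edge, and is not directly of a form covered by the cited theorems. I would establish its normality either by recognizing it as a vertex-link of a larger normal complex and invoking Theorem \ref{link}, or by starting from a normal complex and deleting the facets through both $0$ and $n$ one at a time via Proposition \ref{almost}, checking at each stage that the relevant column of the $\mathcal H$-representation has entries in $\{0,\pm 1\}$. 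Everything else here is bookkeeping organized by $G$, so the two genuine obstacles are the choice of $\bfh$ and the integer-infeasibility argument of step (iii), and to a lesser extent this middle-vertex link.
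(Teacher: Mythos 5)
There are two genuine gaps in your proposal, one of them fatal. The non-normality half---which is the heart of the theorem---is not proved; it is only a plan. You never specify the hole $\bfh$, and you yourself flag that choosing it and proving integer-infeasibility is the hardest part. The paper does this concretely: in the representation of Definition \ref{def:fullrank} it takes
\[
\bfb \;=\; \frac{1}{4}\sum_{\bfi \in \{0,1\}^{n+1}} \bfa_\bfi,
\]
i.e.\ the margin vector of the all-$\tfrac14$'s table (quarter-integral, not half-integral as you suggest). Cone membership is immediate; every coordinate of $\bfb$ indexed by a big facet equals $1$ and every coordinate indexed by the edge $[0n]$ equals $2^{n-3}$, so $\bfb$ is integral for $n \ge 3$, and Proposition \ref{lattice} upgrades integrality to $\bfb \in \zz\cala_\calc$. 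For $\bfb \notin \nn\cala_\calc$: since each big-facet coordinate of $\bfb$ is $1$, any representation $\cala_\calc \bfn = \bfb$ with $\bfn$ nonnegative integral forces $\bfn$ to be a $0/1$ vector supported on exactly $2^{n-1}$ tuples $B \subseteq \{0,1\}^{n+1}$; the big facets inside $\{0,\dots,n-1\}$ force pairwise Hamming distance $\ge 2$ among the projections in $B^{\setminus n}$, and Proposition \ref{parity} then forces $i_0 + i_1 + \dots + i_{n-1}$ to be constant mod $2$ on $B$; symmetrically $i_n + i_1 + \dots + i_{n-1}$ is constant, hence $i_0 + i_n$ is constant on $B$, so at least two of the four $([0n],\cdot)$ coordinates of $\cala_\calc\bfn$ vanish, while all four equal $2^{n-3} > 0$ in $\bfb$. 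Your ``Rao-type bound on a forced design'' is morally in this coding-theoretic direction, but without an explicit $\bfb$ and the parity lemma there is no proof.

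Second, the ``delicate case'' you could not close---the link of a middle vertex $i$---has a one-line solution you missed: the complex of all $(n-2)$-subsets of the $n$-set $\{0,\dots,n\}\setminus\{i\}$ not containing both $0$ and $n$ is exactly the union of two copies of $\partial\Delta_{n-2}$ (on the vertex sets missing $n$ and missing $0$, respectively) glued along the full simplex $\Delta_{n-3}$ on the common $n-2$ vertices; that is, it has reducible decomposition $(\partial\Delta_{n-2},\Delta_{n-3},\partial\Delta_{n-2})$, so Lemma \ref{reducible} applies. Your two proposed substitutes are both problematic: realizing this link as a vertex link of a larger \emph{normal} complex cannot use $\calc$ itself (which is non-normal, so Theorem \ref{link} gives nothing), and the route through Proposition \ref{almost} requires the facet-defining inequalities of the cone, which you have not supplied. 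The rest of your minimality analysis (deletions, the links of $0$ and $n$, edge contractions) matches the paper's, with two small caveats: contracting $[0k]$ or $[kn]$ also yields $\partial\Delta_{n-1}$, not only contracting $[0n]$; and your Lawrence-type reductions silently use unimodularity of $\Delta_{n-3}\sqcup\Delta_0$, which does hold by the classification in \cite{bernstein-sullivant2015} but should be said.
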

	
	Before proving Theorem \ref{example}, we must set some notation and prove { two propositions}.
	\begin{defn}\label{hamming}
		If $a,b \in \zz_2^n$ are binary $n$-tuples, then we define the \emph{Hamming distance} between $a$ and $b$ as follows
		\[
			d_H(a,b) := \#\{i : a_i \neq b_i\}.
		\]
		That is, $d_H(a,b)$ gives the number of indices in which $a$ and $b$ are different.
		We define the \emph{Hamming weight} of a binary $n$-tuple $a$ to be
		\[
			w_H(a) := \#\{i: a_i = 1\} = d_H(a,0).
		\]
		That is $w_H(a)$ is the number of nonzero entries in $a$.
{
	\\
		If $\bfi = (i_0,i_1,\dots,i_n)$ is a binary $n+1$-tuple, define
		\[
			\bfi^{\setminus k} := (i_0,\dots \hat{i_k}, \dots,i_n),
		\]
		i.e. the binary $n$-tuple that results when we omit the $k$th entry.
		If $B$ is a collection of binary $n$-tuples, then we define
		\[
			B^{\setminus k} := \{\bfi^{\setminus k} : \bfi \in B\}.
		\]}
	\end{defn}
	
	\begin{prop}\label{parity}
		Given a collection $B$ of $2^{n-1}$ binary $n$-tuples, the following two conditions are equivalent:
		\begin{enumerate}
			\item\label{item:dist} For all $a,b \in B$, $d_H(a,b) \ge 2$
			\item\label{item:weight} The Hamming weight of all elements in $B$ have the same parity.
		\end{enumerate}
	\end{prop}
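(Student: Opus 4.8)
The plan is to prove the two implications separately. The reverse implication (\ref{item:weight}) $\Rightarrow$ (\ref{item:dist}) is routine, while the forward implication (\ref{item:dist}) $\Rightarrow$ (\ref{item:weight}) is where the cardinality hypothesis $|B| = 2^{n-1}$ does the essential work. Throughout I treat $B$ as a set of distinct tuples, which is forced since two equal tuples would have Hamming distance $0$ and violate (\ref{item:dist}).

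For (\ref{item:weight}) $\Rightarrow$ (\ref{item:dist}), I would use the identity $d_H(a,b) = w_H(a+b)$, where $a+b$ denotes the coordinatewise sum in $\zz_2^n$, together with the congruence $w_H(a+b) \equiv w_H(a) + w_H(b) \pmod 2$. If every element of $B$ has the same weight parity, then for distinct $a,b \in B$ the sum $w_H(a) + w_H(b)$ is even, so $d_H(a,b)$ is even; since $a \ne b$ forces $d_H(a,b) \ge 1$, we conclude $d_H(a,b) \ge 2$.

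For the forward direction, let $\phi : \zz_2^n \to \zz_2^{n-1}$ be the map deleting the last coordinate. If $\phi(a) = \phi(b)$ for distinct $a,b \in B$, then $a$ and $b$ agree in the first $n-1$ coordinates, so $d_H(a,b) = 1$, contradicting (\ref{item:dist}). Hence $\phi$ is injective on $B$, and since $|B| = 2^{n-1} = |\zz_2^{n-1}|$, it is in fact a bijection $B \to \zz_2^{n-1}$. Thus each $c \in \zz_2^{n-1}$ has a unique preimage $(c, f(c)) \in B$, which defines a function $f : \zz_2^{n-1} \to \zz_2$. Next I would translate (\ref{item:dist}) into a constraint on $f$: for $c, c'$ at Hamming distance $1$ we have $d_H\big((c,f(c)),(c',f(c'))\big) = 1 + d_H(f(c),f(c'))$, so (\ref{item:dist}) forces $f(c) \ne f(c')$. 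In other words, $f$ is a proper $2$-coloring of the hypercube graph on $\zz_2^{n-1}$ (two vertices adjacent when they differ in one coordinate). This graph is connected and bipartite with color classes given by weight parity, so its only proper $2$-colorings are $f(c) = w_H(c) \bmod 2$ and its complement. In the first case every $(c,f(c)) \in B$ has weight $w_H(c) + f(c) \equiv 0 \pmod 2$, and in the second every such element has weight $\equiv 1 \pmod 2$; either way all weights share a common parity, establishing (\ref{item:weight}).

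The crux is the forward direction, and specifically the counting step that upgrades injectivity of $\phi$ to a bijection: this is exactly what rules out stray tuples of the wrong parity, and without the hypothesis $|B| = 2^{n-1}$ the statement is false (a small distance-$2$ set can mix parities). Packaging the size hypothesis through the connectivity of the hypercube is the cleanest way I see to pin $f$ down to the parity function. I would finally note that the argument covers the degenerate case $n = 1$, where $B$ is a single tuple, both conditions hold vacuously, and the hypercube on $\zz_2^{0}$ is a single vertex.
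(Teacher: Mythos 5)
Your proof is correct. Its first half is exactly the paper's: the coordinate-deletion map is injective on $B$ because two tuples agreeing in all but one position would violate condition (1), and the hypothesis $|B| = 2^{n-1}$ upgrades injectivity to a bijection onto $\zz_2^{n-1}$ --- this counting step is where the cardinality hypothesis does the work in both arguments. Where you differ is the finish. The paper normalizes so that the all-zeros tuple lies in $B$ (a translation, which preserves pairwise distances and shifts all weight parities uniformly) and then runs an explicit induction on the number of ones in the last $n-1$ coordinates: flip a single one to zero, locate the element of $B$ lying over the flipped tuple, and observe that the distance condition forces the first coordinate to toggle, so weight parity propagates down to the base case. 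You instead encode the deleted coordinate as a function $f$ on $\zz_2^{n-1}$, note that condition (1) makes $f$ a proper $2$-coloring of the hypercube graph, and invoke the uniqueness (up to swapping colors) of proper $2$-colorings of a connected bipartite graph. The two finishes are the same propagation along single-bit flips --- the paper's induction is, in effect, a proof of that coloring-uniqueness fact specialized to the hypercube --- but your packaging is cleaner: it avoids the WLOG translation, makes the role of connectivity explicit, and isolates the parity conclusion in a standard graph-theoretic lemma rather than a hand-rolled induction.
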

	{
	\begin{proof}
		It is clear that (\ref{item:weight}) implies (\ref{item:dist}).
		So assume that $B$ satisfies (\ref{item:dist}).
		Then $B^{\setminus 1}$ consists of all $2^{n-1}$ binary $n-1$ tuples,
		as no two binary $n$ tuples of the same Hamming weight parity can disagree only in the first position.
		Without loss of generality, assume $(0,\dots,0) \in B$.
		Now we show that all tuples in $B$ have even hamming weight.
		For $\bfi \in B$, let $k_\bfi$ denote the number of ones in the final $n-1$ entries of $\bfi$.
		If $\bfi \in B$ has $k_\bfi > 0$, then for some $j > 1$, $\bfi_j = 1$.
		Let $\bfi'$ be the binary $n$ tuple that agrees with $\bfi$ everywhere except $\bfi_j' = 0$.
		Then ${\bfi'}^{\setminus 1} = {\bfi''}^{\setminus 1}$ for some $\bfi'' \in B$.
		Since $d_H(\bfi,\bfi') = 1$, $\bfi_1'' = 1+\bfi_1 (mod 2)$.
		So either the hamming weights of $\bfi,\bfi''$ are the same, or they differ by $2$.
		By induction on $k_\bfi$,
		the Hamming weight of $\bfi''$ is even.
		Therefore, so is the Hamming weight of $\bfi$.
	\end{proof}
	}
	
	When $A = \mathcal{A}_{\calc}$ for some $\calc$,
	checking that some $\bfb \in \rr_{\ge 0} A$ is in $\zz A$ is easy.
	The following proposition shows that we only need to check that $\bfb$ has integer entries.
	
	\begin{prop}\label{lattice}
		For any simplicial complex $\mathcal{C}$,
		the matrix $\mathcal{A}_\calc$ has a full-rank submatrix with determinant 1.
	\end{prop}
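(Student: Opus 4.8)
The claim is that for any simplicial complex $\mathcal{C}$, the matrix $\mathcal{A}_\calc$ (in the full-rank representation of Definition \ref{def:fullrank}) contains a square full-rank submatrix whose determinant is $\pm 1$. Let me think about what this matrix looks like.

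In the full-rank representation, columns are indexed by $\bfb \in \{0,1\}^n$ and rows are indexed by faces $F$ of $\calc$. The entry $\bfa_\bfb(F)$ is $1$ iff the restriction of $\bfb$ to $F$ has no $1$'s (i.e., $b_i = 0$ for all $i \in F$), and $0$ otherwise.

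The number of rows equals the number of faces of $\calc$. So I want to find that many columns forming an invertible submatrix with determinant $\pm 1$.

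**Key idea: order rows and columns to get a triangular matrix.**

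Here is the plan. For each face $F$ of $\calc$, I will select the column $\bfb^F$ which is the indicator vector of the complement of $F$... let me reconsider. I want $\bfa_{\bfb^F}(F) = 1$, so $\bfb^F$ should be $0$ on $F$. The natural choice: let $\bfb^F$ be the indicator vector of $F$ itself—wait, that makes $b_i = 1$ on $F$, giving $\bfa_{\bfb^F}(F) = 0$. So instead take $\bfb^F$ to be $0$ on $F$ and $1$ off $F$, i.e., the indicator of the complement $[n]\setminus F$.

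With this choice, consider the entry $\bfa_{\bfb^F}(G)$ for faces $F, G$. This is $1$ iff $\bfb^F$ restricted to $G$ is all zeros, i.e., iff $G \subseteq F$ (since $\bfb^F$ is zero exactly on $F$). So the submatrix $M$ with rows indexed by faces $G$ and columns indexed by faces $F$ has
\[
M_{G,F} = \begin{cases} 1 & \text{if } G \subseteq F \\ 0 & \text{otherwise.}\end{cases}
\]
This is the (transpose of the) \emph{zeta matrix} of the face poset of $\calc$ ordered by inclusion. Order the faces by any linear extension of inclusion (e.g., by cardinality, ties broken arbitrarily). Then $M$ is triangular with $1$'s on the diagonal (since $G \subseteq G$ always), hence $\det M = 1$.

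**Main obstacle.**

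The only real care needed is checking that these selected columns $\bfb^F$ are genuinely distinct as elements of $\{0,1\}^n$—but since distinct faces $F$ give distinct complements, the columns are distinct, so $M$ is an honest square submatrix of $\mathcal{A}_\calc$. One should also confirm that the chosen ordering makes $M$ upper- (or lower-) triangular: with rows and columns both listed in the same linear extension of $\subseteq$, if $G \subseteq F$ then $G$ precedes $F$, so all nonzero off-diagonal entries lie on one side, giving triangularity. The determinant is then the product of diagonal entries, all equal to $1$. This establishes that $\mathcal{A}_\calc$ has a full-rank submatrix of determinant $1$, which in turn shows the submatrix is unimodular in the sense needed: any $\bfb \in \rr_{\ge 0}\mathcal{A}_\calc$ with integer coordinates automatically lies in $\zz \mathcal{A}_\calc$, since one can solve $M \bfx = \bfb$ over $\zz$.
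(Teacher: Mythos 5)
Your proof is correct and is essentially the same as the paper's: you select, for each face $F$ of $\calc$, the column whose index has zeros exactly on $F$, and observe that the resulting face-by-face submatrix is triangular with $1$s on the diagonal once faces are ordered compatibly with inclusion. The paper orders the faces via a facet-by-facet sweep rather than an arbitrary linear extension of $\subseteq$, but the construction and the triangularity argument are identical.
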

	\begin{proof}
		{ We assume that $\mathcal{A}_\calc$ is presented as in Definition \ref{def:fullrank}.}
		Proposition 2.2. in \cite{sullivant2010} gives that the rank of $\mathcal{A}_\calc$ is the number of faces in $\mathcal{C}$.
		We now give a procedure for generating the desired submatrix.
		Let $F_1,\dots,F_k$ be an ordering of the facets of $\calc$.
		Let $f_1^i,\dots,f_r^i$ denote the faces of $F_i$ that are not contained in any earlier facet.
		For each $f_j^i$, choose the column of $\mathcal{A}_\calc$ whose index has zeros at exactly the vertices of {$f_j^i$}.
		This submatrix is upper triangular with $1$s along the diagonal.
	\end{proof}

	We are now ready to prove Theorem \ref{example}.
	
	\begin{proof}[Proof of Theorem \ref{example}]
		We first prove the minimality claim.
		Deleting $0$ or $n$ yields $\partial \Delta_{n-1}$;
		deleting any other vertex yields $\Lambda(\Delta_{n-3} \sqcup \Delta_0)$.
		The link about $0$ or $n$ is {$G(\partial \Delta_{n-2})$};
		the link about any other vertex has reducible decomposition $(\partial\Delta_{n-2}, \Delta_{n-3},\partial\Delta_{n-2})$.
		Contracting an edge $[0n], [0k]$ or $[kn]$ for $0 < k < n$ yields $\partial\Delta_{n-1}$;
		contracting any other edge yields $\Lambda(\Delta_{n-3} \sqcup \Delta_0)$.
		All of these resulting complexes are normal.
		\\
		\indent
		As stated in the beginning of this section,
		we can index the set of columns of $\mathcal{A}_\calc$ by the set of binary $n+1$-tuples, $\{0,1\}^{n+1}$.
		We show that $\calc$ is not normal by showing that the following vector is a hole
		\[
			\bfb = \sum_{\bfi\in \{0,1\}^{n+1}} \frac{1}{4} \bfa_\bfi.
		\]
		Recall that we can index each entry of $\bfa_\bfi,\bfb$ by a facet $F$ of $\calc$
		and a binary tuple whose entries are indexed by the vertices of $F$.
		The entry of $\bfa_\bfi$ indexed by $(F,\bfi_F)$ is $1$ if the binary representation of $i$
		agrees with $\bfi_F$ at the indices indicated by $F$, and $0$ otherwise.
		If $F$ is an $n-2$-dimensional facet (i.e. has $n-1$ vertices),
		then for any fixed $\bfi_F$, there are exactly { four indices} $\bfi$ such that $\bfa_\bfi$ has a $1$ at the entry
		corresponding to $(F,\bfi_F)$.
		So each $(F,\bfi_F)$ entry in $\bfb$ is $1$, and in particular, an integer.
		For the edge $[0n]$, then for any fixed $\bfi_{[0n]}$,
		there are exactly $2^{n-1}$ { indices} $\bfi$ such that $\bfa_\bfi$ has a $1$ at the entry
		corresponding to $([0n],\bfi_{[0n]})$.
		So an entry in $\bfb$ corresponding to $([0n],\bfi_{[0n]})$ is $\frac{2^{n-1}}{4}$,
		which is integral when $n \ge 3$.
		So when $n \ge 3$, $\bfb \in \rr_{\ge0}A \cap \zz^d$
		and so $\bfb \in \rr_{\ge0}A \cap \zz A$ by Proposition \ref{lattice}.
		\\
		\indent
		We now show that $\bfb \notin \nn A$.
		For the sake of contradiction, assume there exists some $\bfn \in \nn^{2^{n+1}}$ such that $A\bfn = \bfb$.
		If $\bfn_\bfi > 0$ then for each $n-2$-dimensional facet $F$,
		$\bfa_\bfi$ adds $\bfn_\bfi$ to exactly one entry indexed by a pair $(F,\bfi_F)$.
		Since each such entry is $1$ in $\bfb$, $\bfn$ must be a $0-1$ vector.
		Furthermore, exactly $2^{n-1}$ entries of $\bfn$ must be $1$ since
		there are exactly $2^{n-1}$ distinct $(F,\bfi_F)$ for each fixed $n-2$-dimensional facet $F$.
		So the vector $\bfn$ describes a collection of $2^{n-1}$ binary $n+1$-tuples, which we denote $B$, i.e.
		\[
			\cala_\calc \bfn = \sum_{\bfi \in B} \bfa_\bfi.
		\]

		Since $\calc$ includes each $F$ that is an $n-1$-element subset of $\{0,\dots,n-1\}$
		and each $(F,\bfi_F)$ entry of $\bfb$ is $1$,
		we must have ${d_H(\bfi^{\setminus n}, \bfj^{\setminus n})} \ge 2$ for every $\bfi,\bfj \in B$.
		So Proposition \ref{parity} implies that all the elements of { $B^{\setminus n}$}
		have the same parity.
		So for $\bfi \in B$, we must have $i_0 = i_1 + \dots + i_{n-1}\textnormal{ } (mod 2)$.
		Since $\calc$ includes each $F$ that is an $n-1$-element subset of $\{1,\dots,n\}$
		and each $(F,\bfi_F)$ entry of $\bfb$ is $1$,
		the same argument shows that $i_n = i_1 + \dots + i_{n-1} \textnormal{ }(mod 2)$.
		So $i_0 = i_n$ for all $\bfi \in B$.
		But then the entries of $\bfb$ corresponding to $([0n],(0,1))$ and $([0n],(1,0))$
		must be $0$.
		But we know that these entries are $2^{n-3}$.
	\end{proof}

	We can use the forbidden-minor classification of unimodular simplicial complexes
	given in \cite{bernstein-sullivant2015} to give some more examples of minimally non-normal complexes.
	Namely, if $\mathcal{C}$ is normal, but minimally non-unimodular, then its Lawrence lifting $\Lambda(\calc)$ is minimally non-normal.
	The minimally non-unimodular complexes include the infinite family $\{\partial\Delta_n \sqcup \{v\}\}_{n\ge 1}$, and six other complexes.
	We know which of these are normal, and which are not.
	Computations in Normaliz \cite{normaliz} show that the complexes $P_4, J_1, J_1^*$ (defined below) are normal.
	We can also see that $\partial\Delta_n \sqcup \{v\}$ is normal by Lemma \ref{reducible} 
	since $\partial\Delta_n$ and $\{v\}$ are normal (unimodular even - see \cite{bernstein-sullivant2015}),
	and $\partial\Delta_n \sqcup \{v\}$ has reducible decomposition $(\partial\Delta_n,\emptyset,\{v\})$.
	
	\begin{prop}\label{bigfacetnonnormal}
		Let $\mathcal{C}$ be among the following complexes.
		\begin{enumerate}
			\item $P_4$, the path on $4$ vertices
			\item $J_1$, the complex $[12][15][234][345]$
			\item $J_1^*$ the complex $[134][235][245]$ (this is the Alexander dual of $J_1$)
			\item $\partial\Delta_n \sqcup \{v\}$ for $n \ge 1$.
		\end{enumerate}
		Then $\Lambda(\calc)$ is minimally non-normal.
	\end{prop}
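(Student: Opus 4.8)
The plan is to reduce every vertex-deletion, edge-contraction, and link of $\Lambda(\calc)$ to an operation on $\calc$ and then apply Theorem \ref{bigfacettheorem}, exploiting that each listed $\calc$ is normal and \emph{minimally non-unimodular} in the sense of \cite{bernstein-sullivant2015}. Non-normality of $\Lambda(\calc)$ is immediate: each $\calc$ is non-unimodular, so Theorem \ref{bigfacettheorem} applies. (Normality of $\calc$ itself comes from the Normaliz computations for $P_4,J_1,J_1^*$ and from Lemma \ref{reducible} for $\partial\Delta_n\sqcup\{v\}$, as discussed above.)

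For minimality, put $\calc$ on ground set $[n]$, so $\Lambda(\calc)$ has big facet $[n]$ and apex $n+1$, and a set $S$ is a face of $\Lambda(\calc)$ exactly when $n+1\notin S$ or $S\setminus\{n+1\}\in\calc$. Comparing facet sets---and using the remark in Definition \ref{def:lawrence} that any complex with a big facet is the Lawrence lifting of its apex link---I would establish, for $v\in[n]$ and $i,j\in[n]$, the dictionary
\begin{align*}
\link_{n+1}\Lambda(\calc)=\calc, &\qquad \link_v\Lambda(\calc)=\Lambda(\link_v\calc),\\
\Lambda(\calc)\setminus(n+1)=2^{[n]}, &\qquad \Lambda(\calc)\setminus v=\Lambda(\calc\setminus v),\\
\Lambda(\calc)/\{i,n+1\}=2^{([n]\setminus\{i\})\cup\{w\}}, &\qquad \Lambda(\calc)/\{i,j\}=\Lambda(\calc/\{i,j\}),
\end{align*}
where $w$ is the vertex created by the contraction and $2^{(\cdot)}$ denotes a full simplex. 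The two simplices are unimodular, hence normal, and $\link_{n+1}\Lambda(\calc)=\calc$ is normal by hypothesis. Every other entry is a Lawrence lifting $\Lambda(\mathcal{D})$, which by Theorem \ref{bigfacettheorem} is normal precisely when $\mathcal{D}$ is unimodular. Since $\calc$ is minimally non-unimodular, $\link_v\calc$, $\calc\setminus v$, and $\calc/\{i,j\}$ are unimodular whenever $\{i,j\}$ is genuinely an edge of $\calc$, settling all of these.

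The hard part will be the last entry when $\{i,j\}\subseteq[n]$ is a \emph{non-face} of $\calc$: such a pair still lies in the big facet, so it is an edge of $\Lambda(\calc)$ that must be contracted, yet $\calc/\{i,j\}$ (the complex obtained by identifying the two non-adjacent vertices) is not a minor of $\calc$ in the sense controlled by minimal non-unimodularity. I would handle these by direct inspection. For $\partial\Delta_n\sqcup\{v\}$ the only non-face pairs are $\{v,u\}$ with $u$ a vertex of $\partial\Delta_n$, and one checks that $(\partial\Delta_n\sqcup\{v\})/\{v,u\}=\partial\Delta_n$, which is unimodular. For the finite complexes $P_4,J_1,J_1^*$ there are only finitely many non-face pairs, and a Normaliz computation confirms that each contraction $\calc/\{i,j\}$ is unimodular.

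It remains to check the standing nondegeneracy requirements: $\Lambda(\calc)$ has no ghost vertex (every element of $[n]$ lies in the big facet and $n+1$ lies in each $F\cup\{n+1\}$), no cone vertex (a vertex in every facet of $\Lambda(\calc)$ would be a cone vertex of $\calc$, of which there are none), and is non-reducible (a reducible decomposition would restrict to one of $\calc$). Assembling the cases, every vertex-deletion, edge-contraction, and link of $\Lambda(\calc)$ is normal while $\Lambda(\calc)$ itself is not, so $\Lambda(\calc)$ is minimally non-normal.
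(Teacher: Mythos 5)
Your proposal follows essentially the same route as the paper: reduce each deletion, link, and contraction of $\Lambda(\calc)$ to an operation on $\calc$, observe that the result is either a simplex, $\calc$ itself, or a Lawrence lifting, and then invoke Theorem \ref{bigfacettheorem} together with minimal non-unimodularity of $\calc$; non-normality of $\Lambda(\calc)$ likewise comes from Theorem \ref{bigfacettheorem}. In fact you are more careful than the paper in two places: you explicitly isolate the contraction of an edge $\{i,j\}$ of the big facet that is a \emph{non-face} of $\calc$ (a vertex identification not controlled by minimal non-unimodularity), which the paper disposes of with ``we can check that $\calc'$ is unimodular,'' and you attempt to verify the nondegeneracy clauses (no ghost or cone vertices, non-reducibility) that the paper's definition of minimally non-normal requires but whose proof never addresses.

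There is, however, one genuine flaw: your non-reducibility argument, ``a reducible decomposition would restrict to one of $\calc$,'' cannot work for case (4). The complex $\calc = \partial\Delta_n \sqcup \{v\}$ \emph{is} reducible --- the paper itself uses the decomposition $(\partial\Delta_n, \emptyset, \{v\})$ to prove that it is normal --- so deriving a reducible decomposition of $\calc$ from one of $\Lambda(\calc)$ yields no contradiction. What is actually true is that the big facet $[n]$ glues the pieces of $\calc$ together, so $\Lambda(\calc)$ can be (and here is) non-reducible even when $\calc$ is reducible; this needs a direct argument. For instance: in a nontrivial decomposition $(\calc_1, S, \calc_2)$ of $\Lambda(\calc)$ with $[n] \in \calc_1$, any facet $F \cup \{n+1\}$ of $\calc_2$ not in $\calc_1$ satisfies $F = [n] \cap (F \cup \{n+1\}) \subseteq S$; one then checks $n+1 \notin S$ (else $S = F \cup \{n+1\}$ would lie in $\calc_1 \cap \calc_2$, contradicting $F\cup\{n+1\}\notin\calc_1$), and from $S \subseteq [n]$ one derives that either some vertex of $\calc$ is a ghost vertex or the decomposition is trivial --- neither of which occurs for the listed complexes. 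Without repairing this step (or at least flagging it), your write-up does not establish the ``non-reducible'' clause of the definition for case (4), though admittedly the paper's own proof omits this verification entirely.
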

	\begin{proof}
		Let $u$ denote the vertex added when creating $\Lambda(\calc)$ from $\calc$.
		If we delete $u$ we are left with a simplex, which is normal.
		If we take the link of the $u$, then we are left with $\calc$ which is normal.
		If we take the link or deletion of any other vertex $w \neq u$, we are left with a Lawrence lifting of a link or deletion of $\calc$, call it $\calc'$.
		Since $\calc$ is \emph{minimally} non-unimodular, $\calc'$ is unimodular and so $\Lambda(\calc')$ is normal.
		If we contract an edge that connects to $u$, then we are left with a simplex, with is normal.
		If we contract an edge that does not connect to $u$, then we are left with a Lawrence lifting of { an} edge contraction of $\calc$, call it $\calc'$.
		We can check that $\calc'$ is unimodular and so $\Lambda(\calc')$ is normal.
	\end{proof}
	
	The remaining three minimally non-unimodular complexes are not normal.
	The complex $J_2 = [12][235][34][145]$ is not normal but not minimally so -
	contracting the edge $[14]$ gives $\Lambda(\partial\Delta_1 \sqcup \{v\})$.
	The other two minimally non-unimodular complexes are minimally non-normal,
	as we show in the following proposition.
	
	\begin{prop}
		The following complexes are minimally non-normal:
		\begin{enumerate}
			\item $[123][134][145][125][623][634][645][625]$, the boundary of the octahedron
			\item $[1234][3456][1256]$, the Alexander dual of the boundary of the octahedron.
		\end{enumerate}
	\end{prop}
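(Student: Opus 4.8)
The plan is to separate the two requirements of Definition \ref{def:fullrank}'s surrounding notion of minimal non-normality: first handle minimality, then non-normality. For minimality I would lean entirely on the classification of \cite{bernstein-sullivant2015}. Both the boundary of the octahedron and its Alexander dual appear on the list of minimally non-unimodular complexes, so every vertex deletion, edge contraction, and vertex link of each complex is unimodular, and hence normal by Proposition \ref{uninormal}. This is exactly the reasoning already used in Proposition \ref{bigfacetnonnormal}. Because reducibility, cone vertices, and ghost vertices all preserve unimodularity, membership on that list also certifies that both complexes are non-reducible and free of cone and ghost vertices. Thus all the content reduces to showing that each of the two complexes is itself non-normal.

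For the Alexander dual $\calc = [1234][3456][1256]$ I would exploit its block structure. Grouping the six binary vertices into the three super-variables $\{1,2\}, \{3,4\}, \{5,6\}$, each with $2^2 = 4$ states, every facet is a union of two blocks: $1234$, $3456$, $1256$ become the three edges of a triangle on the super-vertices. Since a binary hierarchical model whose facets are unions of blocks coincides, after merging the block coordinates into single variables, with the coarsened model on the merged state spaces, the matrix $\cala_\calc$ equals (up to reindexing rows and columns) the matrix $\cala_{\calc',\bfd}$ where $\calc'$ is the complex on three vertices whose facets are all two-element subsets and $\bfd = (4,4,4)$. Theorem \ref{triangle} then applies directly: all three entries of $\bfd$ exceed $2$ and $(4,4,4)$ is none of the listed exceptional vectors, so $\cala_\calc$ is not normal. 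This settles item (2) cleanly.

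The harder case is the octahedron itself, for which no such reduction exists: its facets are the transversals of the antipodal partition $\{1,6\},\{2,4\},\{3,5\}$ rather than unions of blocks. Here I would exhibit an explicit hole and argue as in the proof of Theorem \ref{example}. Using the full-rank representation of Definition \ref{def:fullrank} together with Proposition \ref{lattice}, checking membership of a candidate $\bfb \in \rr_{\ge 0}\cala_\calc$ in $\zz\cala_\calc$ reduces to checking that $\bfb$ has integer entries, so the task is to produce a nonnegative rational combination of columns whose face-values are all integral and then to rule out any nonnegative integer representation. The latter I would certify combinatorially: a nonnegative integer representation forces a multiset of binary $6$-tuples whose zero-sets meet each triangle facet a prescribed number of times, and a parity argument in the spirit of Proposition \ref{parity} should show these incidence constraints are mutually inconsistent.

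The main obstacle is pinning down the right hole. The fully symmetric candidate $\frac{1}{8}\sum_\bfi \bfa_\bfi$, the analogue of the witness used for Theorem \ref{example}, turns out \emph{not} to be a hole here: its face-values are exactly realized by the eight columns whose zero-sets are the transversal triangles, so it lies in $\nn\cala_\calc$. Consequently the octahedral symmetry must be broken, and the witnessing hole is necessarily less symmetric; I expect to locate it first by a Normaliz \cite{normaliz} computation and then reverse-engineer a clean parity certificate for its non-representability. Producing a hole whose inconsistency admits such a short combinatorial proof, rather than a purely machine verification, is the crux of the argument.
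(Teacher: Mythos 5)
Your handling of item (2) is correct and is essentially the paper's own argument: merging the blocks $\{1,2\},\{3,4\},\{5,6\}$ identifies $\cala_{[1234][3456][1256]}$ with the no-3-way-interaction matrix on $\bfd=(4,4,4)$, which is non-normal (the paper cites \cite[Theorem 6.4]{ohsugi-hibi2007}; Theorem \ref{triangle} gives the same conclusion). Your observation that the symmetric candidate $\tfrac18\sum_{\bfi}\bfa_\bfi$ is \emph{not} a hole of the octahedron is also correct. However, your minimality argument has a genuine gap. The minimality in the classification of \cite{bernstein-sullivant2015} is with respect to vertex deletions and links only; edge contraction is not among those minor operations, so membership on that list does \emph{not} certify that the contractions of these complexes are unimodular or normal. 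The paper's own proofs reflect this: in Proposition \ref{bigfacetnonnormal} contractions are handled by ``we can check that $\calc'$ is unimodular,'' not by minimality, and in the present proposition the contractions are identified explicitly (cone over a square or the Alexander dual of $\Delta_2\sqcup\Delta_1$ for the octahedron; the no-3-way interaction model on $(4,4,2)$, normal by \cite[Theorem 6.4]{ohsugi-hibi2007}, for its dual). Your further claim that list membership certifies non-reducibility rests on the false premise that reducible gluing preserves unimodularity: the complex $\partial\Delta_n\sqcup\{v\}$, which sits on the very same list, has reducible decomposition $(\partial\Delta_n,\emptyset,\{v\})$ into unimodular pieces yet is not unimodular. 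Non-reducibility and the absence of cone and ghost vertices for these two complexes must simply be observed directly (which is easy).

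The second gap is the one you yourself flag as the crux: non-normality of the octahedron is never established in your argument. The paper's proof of this step is nothing more than a Normaliz \cite{normaliz} computation, so your fallback plan coincides with the paper; but as written, your proof is incomplete exactly there, since you neither accept the machine verification as the proof nor exhibit the hole and its certificate. Be aware also that the parity tool you hope to adapt, Proposition \ref{parity}, is tailored to collections of $2^{n-1}$ tuples pairwise at Hamming distance at least $2$ --- a structure that Theorem \ref{example} forces because its complex contains all $(n-1)$-element facets avoiding $\{0,n\}$. The octahedron has only eight triangular facets, so that forcing step does not transfer, and some genuinely new combinatorial idea would be needed to replace the computation.
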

	\begin{proof}
		A computation in Normaliz \cite{normaliz} shows that the first complex is not normal.
		The matrix for the second complex is the matrix for the no 3-way interaction model with $r_1=r_2=r_3 = 4$,
		which is not normal \cite[Theorem 6.4]{ohsugi-hibi2007}.
		All links and induced sub-complexes are unimodular \cite{bernstein-sullivant2015} and therefore normal.
		Contracting an edge in the first complex gives either the cone over a square or the Alexander dual of $\Delta_2\sqcup \Delta_1$,
		both of which are unimodular.
		Contracting an edge in the second complex gives a complex whose matrix is the matrix for the no 3-way interaction model
		with $r_1=r_2 = 4$ and $r_3 = 2$ which is normal \cite[Theorem 6.4]{ohsugi-hibi2007}.
	\end{proof}


\section{Compressed Hierarchical Models}\label{sec:compressed}

A stronger property for a matrix $A \in \zz^{d \times n}$
is to be compressed.  This condition guarantees normality
and can be easier to verify directly through polyhedral computations.
As in the normal case, we show that taking induced subcomplexes, edge contractions, reducible decompositions, and links
preserves the property of being compressed.

To state the definition of compressed vector configuration we
first need the definition of the toric ideal associated to the
configuration $A$.  See \cite{sturmfels} for more details on toric ideals and
their Gr\"obner bases.

\begin{defn}
Let $A \in \zz^{d \times n}$ and $\kk[x] :=  \kk[x_{1}, \ldots, x_{n}]$.
The \emph{toric ideal} associated to the matrix $A$ is the ideal
$$
\langle x^{u} - x^{v} :  u,v \in \nn^{n}, Au = Av  \rangle.
$$
\end{defn}

\begin{defn}
	If $A \in \zz^{d\times n}$ is an integer matrix such that the corresponding toric ideal $I_A \subset \kk[x_1,\dots,x_n]$ is homogeneous.
	Then we say that $A$ is \emph{compressed} if the initial ideal $in_{\prec}(I_A)$ is generated by squarefree monomials
	whenever $\prec$ is a reverse lexicographic term order.
\end{defn}

It can often be easier to use the following characterization of compressed matrices.

\begin{thm}[\cite{sullivant2006}]\label{compressedEquiv}
	Let $A \in \zz^{d\times n}$ be an integer matrix such that the corresponding toric ideal $I_A \subset \kk[x_1,\dots,x_n]$ is homogeneous
	and let $B$ be the matrix of facet defining inequalities of $\rr_{\ge 0}A$.
	In other words,  $B$ satisfies
	\[
		\rr_{\geq 0}A  =  \{Ax: x \ge 0\} = \{y: By \ge 0\}.
	\]
	Suppose that the entries in each column of $B$ are relatively prime.
	Then $A$ is compressed if and only if $AB^{T}$ is a $0/1$ matrix.
\end{thm}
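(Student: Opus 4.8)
The plan is to translate the statement into the standard geometric language of lattice distances and then to establish that characterization using pulling triangulations. First I would exploit homogeneity of $I_A$: the columns $\bfa_1,\dots,\bfa_n$ of $A$ then lie on a common affine hyperplane, so $\rr_{\ge 0}A = \cone(P)$ with $P = \conv(\bfa_1,\dots,\bfa_n)$, and the rows $\bfb_1,\dots,\bfb_f$ of $B$ are the inner facet normals of this cone, with each facet $F_i = \{\bfa \in P : \bfb_i\cdot\bfa = 0\}$. The relatively-prime hypothesis makes each $\bfb_i$ primitive, so the entry $\bfb_i\cdot\bfa_j$ of the product matrix $AB^\intercal$ is exactly the lattice distance from $\bfa_j$ to $F_i$. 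Since $\bfa_j\in\rr_{\ge 0}A$ and all vectors are integral, each such entry is a nonnegative integer; hence ``$AB^\intercal$ is a $0/1$ matrix'' is equivalent to the condition that every point $\bfa_j$ lies at lattice distance at most $1$ from every facet of $P$. It remains to show $A$ is compressed if and only if this holds.

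Next I would invoke the dictionary between reverse lexicographic Gröbner bases and pulling triangulations (see \cite{sturmfels}): for a reverse lexicographic term order $\prec$, the radical of $in_\prec(I_A)$ is the Stanley--Reisner ideal of the pulling triangulation $\Delta_\prec$ of $P$ determined by $\prec$, and $in_\prec(I_A)$ is squarefree precisely when $\Delta_\prec$ is unimodular. Thus $A$ is compressed if and only if every pulling triangulation of $P$ is unimodular, and I would prove both directions through this reformulation.

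For ``$0/1 \Rightarrow$ compressed'' I would assume every facet distance is at most $1$ and induct on $\dim P$ to show each pulling triangulation is unimodular. Pulling a vertex $\bfv$ first writes every maximal simplex through $\bfv$ as a join $\bfv * \tau$, where $\tau$ is maximal in the pulling triangulation of a facet $F$ with $\bfv\notin F$; normalized volume is multiplicative, so $\mathrm{Vol}(\bfv*\tau) = \mathrm{dist}(\bfv,F)\cdot\mathrm{Vol}(\tau)$, and $\mathrm{dist}(\bfv,F)=\bfb_F\cdot\bfv \le 1$ while $\ge 1$ because $\bfv\notin F$, forcing it to equal $1$. To run the induction I must know the distance-$\le 1$ property descends to each facet $F$: for a ridge $R = F\cap F'$ and a vertex $\bfu\in F\setminus R$, the functional $\bfb_{F'}$ vanishes on $R$ and satisfies $\bfb_{F'}\cdot\bfu = 1$; restricted to the lattice $\zz^d\cap\mathrm{aff}(F)$ it is integer-valued with values $0$ on $R$ and $1$ on $\bfu$, so writing $\bfb_{F'}|_{\mathrm{aff}(F)} = k\,g$ with $g$ primitive gives $g(\bfu) = 1/k\in\zz$, hence $k=1$ and the within-$F$ distance from $\bfu$ to $R$ is again $1$. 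This face-inheritance step is where I expect the main difficulty to lie, and it is the crux of the forward induction.

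For the converse ``compressed $\Rightarrow 0/1$'' I would argue the contrapositive. If some vertex $\bfv$ has $\bfb_F\cdot\bfv = k \ge 2$, then pulling $\bfv$ first yields a maximal simplex $\bfv * \tau$ with $\tau$ in the pulling triangulation of $F$ (reached because $\bfv\notin F$), of normalized volume $k\cdot\mathrm{Vol}(\tau)\ge 2$; this pulling triangulation is non-unimodular, so the corresponding reverse lexicographic initial ideal is not squarefree and $A$ is not compressed. Combining the two directions with the reduction of the first paragraph yields the stated equivalence, with the face-inheritance lemma and the reliance on the reverse-lex/pulling dictionary being the two load-bearing ingredients.
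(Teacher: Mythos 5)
Your proposal is correct, but note that this paper never actually proves Theorem \ref{compressedEquiv}: it is quoted from \cite{sullivant2006}, and your argument is essentially the proof found in that reference---the reverse-lexicographic/pulling-triangulation dictionary, the reading of the entries of $AB^{T}$ as lattice distances to facet hyperplanes, and the induction on dimension through the join decomposition of a pulling triangulation, with your face-inheritance lemma supplying the heredity of the width-one condition. The only point worth flagging is that unimodularity of the triangulations must be measured with respect to the lattice $\zz A$ for the squarefree-initial-ideal dictionary to apply, which matches your $\zz^{d}$-lattice-distance computations precisely when $\zz A$ is saturated in $\zz^{d} \cap \rr A$, as it is for the hierarchical-model matrices considered in this paper (cf. Proposition \ref{lattice}).
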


As the following Proposition states, compressed-ness is a stronger property than normality, and weaker than unimodularity.
Hence, to classify the compressed models, we can restrict attention to normal models.

\begin{prop}\label{compressedNormal}
	If $A \in \zz^{d \times n}$ is compressed, then $A$ is normal.
	If $A \in \zz^{d \times n}$ is unimodular, then $A$ is compressed.
\end{prop}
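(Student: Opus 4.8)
The plan is to prove the two implications separately, relying on the characterization of compressed matrices in Theorem~\ref{compressedEquiv} together with the polyhedral description of normality in Definition~\ref{def:normal}.

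For the first implication (compressed $\Rightarrow$ normal), I would argue via the squarefree initial ideal. Recall that $A$ is compressed means some reverse lexicographic initial ideal $in_\prec(I_A)$ is squarefree (generated by squarefree monomials). A standard fact from the theory of toric ideals and their Gr\"obner bases (see \cite{sturmfels}) is that the squarefree monomials generating $in_\prec(I_A)$ correspond to a \emph{unimodular regular triangulation} of the point configuration given by the columns of $A$; the existence of such a triangulation of the cone $\rr_{\ge 0}A$ into unimodular simplices forces the semigroup $\nn A$ to equal $\zz A \cap \rr_{\ge 0}A$. Concretely, given $\bfh \in \zz A \cap \rr_{\ge 0}A$, one locates $\bfh$ inside a maximal simplex $\sigma$ of the triangulation; because $\sigma$ is unimodular, the unique nonnegative real combination of the columns indexed by $\sigma$ that yields $\bfh$ in fact has integer coefficients, so $\bfh \in \nn A$. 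Hence $A$ has no holes and is normal.

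For the second implication (unimodular $\Rightarrow$ compressed), I would invoke the criterion of Theorem~\ref{compressedEquiv}: writing $B$ for the matrix of facet-defining inequalities of $\rr_{\ge 0}A$ with columns having relatively prime entries, $A$ is compressed if and only if $AB^T$ is a $0/1$ matrix. The entries of $AB^T$ are the values $\langle \bfa_i, \bfw_j\rangle$ where $\bfa_i$ ranges over columns of $A$ and $\bfw_j$ over the (primitive) inner facet normals of the cone. Each such value is a nonnegative integer since $\bfa_i \in \rr_{\ge 0}A$ and $\bfw_j$ defines a valid inequality. The key point is that unimodularity of $A$ forces each facet normal $\bfw_j$ to be such that $\langle \bfa_i, \bfw_j \rangle \in \{0,1\}$: a column of $A$ lying off a facet hyperplane must, by the integrality built into unimodularity (every relevant polytope has integral vertices, equivalently all maximal simplices have normalized volume $1$), have inner product exactly $1$ with the primitive normal. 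Thus $AB^T$ is $0/1$ and $A$ is compressed.

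\textbf{The main obstacle} I anticipate is making the unimodular $\Rightarrow$ compressed direction fully rigorous, specifically pinning down why unimodularity forces $\langle \bfa_i, \bfw_j\rangle \le 1$ rather than merely being a nonnegative integer. The cleanest route is probably to pass through a unimodular triangulation: a unimodular configuration admits a triangulation all of whose simplices have normalized volume $1$, and for such a configuration the ``pulling'' or ``placing'' triangulation refines the facet structure so that the lattice-width of the configuration in each facet-normal direction is exactly $1$. I would need to either cite this standard equivalence or give a short self-contained argument that in a unimodular configuration no column can be at lattice-distance greater than $1$ from any facet. Since the paper treats unimodularity, compressedness, and normality as progressively weaker conditions with the hierarchy already folklore in the literature, it may well suffice to cite \cite{sturmfels} or \cite{sullivant2006} for the underlying triangulation facts and keep the proof of this proposition brief.
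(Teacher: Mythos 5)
The first thing to note is that the paper contains no proof of this proposition at all: it is stated as a known fact, with the surrounding compressedness results (Theorem~\ref{compressedEquiv}, Proposition~\ref{opcompressed}) attributed to \cite{sullivant2006}, so your proposal has to be judged on its own merits rather than against a paper argument. Your first implication is correct and is the standard argument: compressedness gives a squarefree reverse lexicographic initial ideal (you write that compressed means ``some'' revlex initial ideal is squarefree --- the paper's definition requires \emph{every} one, but for this direction one suffices), a squarefree initial ideal of $I_A$ corresponds to a regular unimodular triangulation of the configuration \cite[Corollary~8.9]{sturmfels}, and your localization argument (place $\bfh \in \zz A \cap \rr_{\ge 0}A$ in a maximal simplex $\sigma$; the columns of $\sigma$ form a $\zz$-basis of $\zz A$, so the unique nonnegative real coefficients expressing $\bfh$ are integers) is exactly how one deduces normality from such a triangulation.

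The genuine gap is in the second implication, and it is more serious than the ``obstacle'' you flag. The step you leave open --- that unimodularity forces $\langle \bfa_i, \bfw_j \rangle \in \{0,1\}$ --- is actually \emph{false} with the normalization Theorem~\ref{compressedEquiv} uses, namely facet normals primitive with respect to $\zz^d$. Unimodularity of $A$ only controls pairings against functionals that are primitive with respect to the dual lattice of $\zz A$, and the two notions differ when $\zz A$ is not saturated. Concretely, take $A$ with columns $(1,0)^\intercal$ and $(1,2)^\intercal$: this $A$ is unimodular in the sense of Definition~\ref{def:unimodular} (here $P_{A,b}$ is the single point $A^{-1}b$, which is integral for every $b \in \zz A$), and it is compressed since $I_A = 0$, yet the $\zz^2$-primitive facet normal $(0,1)^\intercal$ of $\rr_{\ge 0}A$ pairs to $2$ with the column $(1,2)^\intercal$. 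So no argument can close your gap as you have set it up; the same example shows that Theorem~\ref{compressedEquiv} itself carries unstated saturation hypotheses (harmless for hierarchical models by Proposition~\ref{lattice}, but this proposition, as stated, concerns arbitrary integer matrices).

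The clean repair avoids facet inequalities altogether and reuses the machinery from your first paragraph --- it is essentially the triangulation route you gesture at in your final paragraph, made precise. Unimodularity in the sense of Definition~\ref{def:unimodular} is equivalent to every maximal linearly independent set of columns of $A$ being a $\zz$-basis of $\zz A$ (if some such set $\sigma$ were not, then a point of $\zz A$ in the relative interior of $\cone(\sigma)$ with a fractional coordinate would be a non-integral vertex of $P_{A,b}$ for some $b \in \zz A \cap \rr_{\ge 0}A$). Hence \emph{every} triangulation of the configuration, in particular every regular triangulation induced by a reverse lexicographic order, is unimodular with respect to $\zz A$, and the same correspondence \cite[Corollary~8.9]{sturmfels} you invoked above then says that every reverse lexicographic initial ideal of $I_A$ is squarefree. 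That is precisely the definition of compressed, and since everything here is measured against the lattice $\zz A$, no saturation issues arise.
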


\begin{prop}\label{opcompressed}
	The following operations on simplicial complexes preserve the property of being compressed of $\mathcal{A}_{\calc,\bfd}$:
	\begin{itemize}
		\item gluing two complexes along a {common} facet
		\item passing to induced subcomplexes
		\item adding or removing cone vertices
		\item contracting edges.
	\end{itemize}
\end{prop}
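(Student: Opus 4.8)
The plan is to reduce every operation to two basic closure properties of compressed configurations, both of which are immediate from Theorem \ref{compressedEquiv}: first, that a face of a compressed configuration is compressed (a face lies in a supporting hyperplane, and its facet inequalities are traces of facet inequalities of the ambient cone, so the $0/1$ condition on $AB^{\intercal}$ descends); and second, that a block-diagonal product of compressed configurations is compressed (the cone, its facet matrix $B$, the product $AB^{\intercal}$, and the toric ideal all split into blocks). Throughout I would mirror the normal-case proofs of \cite{sullivant2010}, namely Lemmas \ref{deletioncontraction} and \ref{reducible}, replacing each appeal to ``normality is preserved'' by the corresponding statement for compressedness, while checking along the way that the homogeneity hypothesis and the column-gcd condition on $B$ required by Theorem \ref{compressedEquiv} are maintained.

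I would dispatch cone vertices first, as the easiest case. Recall that $\mathcal{A}_{\cone(\calc),\bfd'}$ is block diagonal with $d_{n+1}$ copies of $\mathcal{A}_{\calc,\bfd}$, so $\rr_{\ge 0}\mathcal{A}_{\cone(\calc)}$ is a product of copies of $\rr_{\ge 0}\mathcal{A}_{\calc}$. Its facet matrix $B$ is then block diagonal with copies of the facet matrix of $\mathcal{A}_{\calc}$, the product $\mathcal{A}B^{\intercal}$ is block diagonal with copies of $\mathcal{A}_{\calc}B_{\calc}^{\intercal}$, and the toric ideal splits as a sum in disjoint variables; hence by Theorem \ref{compressedEquiv} the coned complex is compressed if and only if $\calc$ is, which yields both the addition and the removal of cone vertices simultaneously.

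For induced subcomplexes and edge contractions, I would realize each resulting matrix as the defining data of a face of the cone $\rr_{\ge 0}\mathcal{A}_{\calc}$, exactly as in the normal-case argument. Vertex deletion and edge contraction both correspond to conditioning an appropriate marginal to an extreme value, which cuts out a face whose configuration, after dropping the now-constant coordinates, is $\mathcal{A}_{\calc\setminus v,\bfd'}$ and $\mathcal{A}_{\calc/L,\bfd'}$ respectively. Since a face of a compressed configuration is compressed, the conclusion follows; the only point needing verification is that the identification of the face with the smaller hierarchical model is lattice-preserving, which is precisely the content of the corresponding step of \cite{sullivant2010} and carries over unchanged.

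The main obstacle is gluing along a common facet, i.e.\ the reducible case, because the glued configuration is a toric fiber product rather than an ordinary product, so the block-diagonal argument does not apply directly. Here I would analyze the facet structure of $\rr_{\ge 0}\mathcal{A}_{\calc}$ for a reducible decomposition $(\calc_1,S,\calc_2)$ with $S$ a common facet, and show that every facet inequality is an extension of a facet inequality of $\mathcal{A}_{\calc_1}$ or of $\mathcal{A}_{\calc_2}$, so that the $0/1$ condition of Theorem \ref{compressedEquiv} is inherited from the two pieces. The key simplification I would exploit is that the shared subcomplex $2^S$ has a simplex as its marginal polytope, so the fiber product is taken over a unimodular base; I expect this to force every facet of the glued cone to come from one of the two factors, with no genuinely mixed facets, and establishing this facet description — or, alternatively, invoking a toric-fiber-product preservation result in the spirit of \cite{rauh2014} — is the crux of the proof.
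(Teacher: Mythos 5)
Your plan is not comparable to an in-paper argument, because the paper gives none: its ``proof'' of Proposition \ref{opcompressed} is the citation to \cite{sullivant2006}, and your proposal is essentially a reconstruction of that reference's polyhedral arguments. The parts you actually carry out are sound. The cone-vertex case via block-diagonality is correct: the facet matrix of a product of cones is block diagonal, so $AB^{\intercal}$ splits into blocks and Theorem \ref{compressedEquiv} applies in both directions. Deletion and contraction are indeed realized as faces of $\rr_{\geq 0}\mathcal{A}_{\calc,\bfd}$ (fix the level of the deleted vertex; restrict to the ``diagonal'' columns of the contracted face), exactly as in the proof of Lemma \ref{deletioncontraction} in \cite{sullivant2010}, and a face of a compressed configuration is compressed. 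One point you gloss over there: the trace on a face of an ambient facet inequality need not a priori be primitive, but the $0/1$ values rescue you --- an integer functional whose values on the face's generators lie in $\{0,1\}$ and attain $1$ must have coprime entries, since dividing by a gcd $k>1$ would produce the non-integer value $1/k$ on a lattice point.

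The genuine gap is the gluing case, which you leave as an ``expectation,'' and the missing step is not the one you think. What actually needs proof is that for a reducible decomposition $(\calc_1,S,\calc_2)$ the marginal polytope fills out the whole fiber product, i.e. $P_{\calc} = (P_{\calc_1}\times P_{\calc_2})\cap L$ where $L$ is the subspace on which the two $S$-marginals agree. The inclusion $\subseteq$ is trivial, but the reverse inclusion requires an argument you never state: given compatible points $p_i\in P_{\calc_i}$ realized by distributions $\mu_i$ with common $S$-marginal $q$, the glued distribution $\mu=\mu_1\mu_2/q$ (set to $0$ where $q=0$) has marginals $(p_1,p_2)$, so every compatible pair of \emph{points} (not just of vertices) lies in $P_\calc$. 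Once this identity is in hand, your worry about ``genuinely mixed facets'' dissolves by elementary polyhedral geometry: a polytope of the form $Q\cap L$ has each of its facets cut out by one of the inequalities defining $Q$, so every facet inequality of $P_\calc$ is the restriction of a facet inequality of $P_{\calc_1}$ or of $P_{\calc_2}$, its values on the vertices of $P_\calc$ (which are compatible pairs of vertices) lie in $\{0,1\}$ by compressedness of the factors, and the primitivity self-correction above applies again. No unimodularity of the base and no toric-fiber-product machinery is needed --- and your fallback citation is off target in any case, since \cite{rauh2014} concerns Markov bases and normality under toric fiber products, not squarefree reverse lexicographic initial ideals.
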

\begin{proof}
	See \cite{sullivant2006}.
\end{proof}

We also add to the list of compressed-ness operations computing a link
of a vertex.

\begin{prop}\label{substitute}
	Let $A\in \zz^{d\times n}$ be a matrix with columns $\bfa_1,\dots,\bfa_n$.
	{ Assume that the entries of $a_n$ do not have a common divisor}.
	Let $A'$ be the matrix consisting of the first $n-1$ columns of $P_{\bfa_n}A$.
	Let $\phi:\kk[x_1,\dots,x_n] \rightarrow \kk[x_1,\dots,x_{n-1}]$ be the substitution homomorphism mapping $x_n$ to $1$.
	Then $I_{A'} = \phi(I_A)$.
\end{prop}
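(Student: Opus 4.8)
The plan is to reduce this statement about toric ideals to a statement about the integer kernels of $A$ and $A'$, and then to transfer binomials back and forth through the coordinate projection that forgets the last variable. Write $\rho \colon \zz^n \to \zz^{n-1}$ for the map deleting the last coordinate, and recall that the columns of $A'$ are $P_{\bfa_n}\bfa_1,\dots,P_{\bfa_n}\bfa_{n-1}$ while $P_{\bfa_n}\bfa_n = 0$. The central claim I would establish first is the kernel identity
\[
	\ker_\zz A' = \rho(\ker_\zz A).
\]
Granting this, the ideal equality follows by tracking what $\phi$ does to the binomial generators $x^u - x^v$ (with $Au = Av$) of $I_A$, since $\phi$ sends $x_n \mapsto 1$ and hence $\phi(x^u - x^v) = x^{\rho(u)} - x^{\rho(v)}$.

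To prove $\rho(\ker_\zz A) \subseteq \ker_\zz A'$, I would take $u \in \ker_\zz A$, so $\sum_{j=1}^n u_j \bfa_j = 0$; applying the linear map $P_{\bfa_n}$ and using $P_{\bfa_n}\bfa_n = 0$ gives $\sum_{j=1}^{n-1} u_j P_{\bfa_n}\bfa_j = 0$, that is, $A'\rho(u) = 0$. This direction needs no hypothesis on $\bfa_n$. For the reverse inclusion, suppose $w \in \ker_\zz A'$, so $P_{\bfa_n}\bigl(\sum_{j=1}^{n-1} w_j \bfa_j\bigr) = 0$; since the kernel of $P_{\bfa_n}$ is the line spanned by $\bfa_n$, we get $\sum_{j=1}^{n-1} w_j \bfa_j = \lambda \bfa_n$ for some $\lambda \in \rr$. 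Here is where the hypothesis that the entries of $\bfa_n$ have no common divisor is essential: the left-hand side is an integer vector, so $\lambda \bfa_n \in \zz^d$, and primitivity of $\bfa_n$ forces $\lambda \in \zz$ (writing $\lambda = p/q$ in lowest terms, $q$ must divide every entry of $\bfa_n$, hence $q = 1$). Then $(w_1,\dots,w_{n-1},-\lambda) \in \ker_\zz A$ projects under $\rho$ to $w$. I expect this integrality step to be the only genuine obstacle; everything else is bookkeeping.

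With the kernel identity in hand, the two inclusions are routine. For $\phi(I_A) \subseteq I_{A'}$: a generator $x^u - x^v$ of $I_A$ has $Au = Av$, so $u - v \in \ker_\zz A$ and hence $A'\rho(u) = A'\rho(v)$ by the easy inclusion; as $\phi(x^u - x^v) = x^{\rho(u)} - x^{\rho(v)}$, this image lies in $I_{A'}$, and since $\phi$ is surjective $\phi(I_A)$ is exactly the ideal generated by these images. For $I_{A'} \subseteq \phi(I_A)$: a generator $x^w - x^{w'}$ of $I_{A'}$ has $A'w = A'w'$, so $w - w' \in \ker_\zz A' = \rho(\ker_\zz A)$, and I may choose $u \in \ker_\zz A$ with $\rho(u) = w - w'$. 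Writing $u_n$ for its last coordinate, I would pick $s,s' \in \nn$ with $s - s' = u_n$; a short computation using $\sum_{j<n}(w_j - w_j')\bfa_j = -u_n\bfa_n$ then shows $A(w,s) = A(w',s')$, so $x^{(w,s)} - x^{(w',s')} \in I_A$, and applying $\phi$ recovers precisely $x^w - x^{w'}$. Combining the two inclusions yields $I_{A'} = \phi(I_A)$.
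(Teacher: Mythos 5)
Your proof is correct and takes essentially the same approach as the paper's: your kernel identity $\ker_\zz A' = \rho(\ker_\zz A)$, with integrality of the multiple of $\bfa_n$ coming from primitivity, is exactly the paper's opening chain of equivalences ($u \in \ker_\zz(A')$ iff $A\binom{u}{0} = k\bfa_n$ for some $k \in \zz$), and your lift of a generator $x^w - x^{w'}$ of $I_{A'}$ to $x^{(w,s)} - x^{(w',s')} \in I_A$ is the paper's step $\bfx^{u^+} - x_n^k\bfx^{u^-} \in I_A$ followed by applying $\phi$. You spell out both inclusions explicitly while the paper writes out only $I_{A'} \subseteq \phi(I_A)$ and leaves the easy direction implicit, but the substance is identical.
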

\begin{proof}
	Note that $u \in \ker_\zz(A')$ iff $P_{\bfa_n}A\binom{u}{0} = 0$
	which is true iff $Au = k \bfa_n$ for some $k \in \zz$ which is true iff $u-(0,\dots,0,k)^\intercal \in \ker_\zz(A)$.
	Now let $\bfx^{u^+}-\bfx^{u-}$ be a generator of $I_{A'}$.
	Since $u-(0,\dots,0,k)^\intercal \in \ker_\zz(A)$, we have $\bfx^{u^+}-x_n^k\bfx^{u-} \in I_A$ without loss of generality.
	Then note $\bfx^{u^+}-\bfx^{u-} = \phi(\bfx^{u^+}-x_n^k\bfx^{u-})$.
\end{proof}

\begin{thm}\label{linkcompressed}
	Assume $\mathcal{A}_{\mathcal{C},{\bf d}}$ is compressed.
	Let $v$ be a vertex of $\mathcal{C}$ and let ${\bf d}'$ denote the vector obtained by deleting the entry for $v$ from ${\bf d}$.
	Then $\mathcal{A}_{\link_v{\mathcal{C}},{\bf d}'}$ is compressed.
\end{thm}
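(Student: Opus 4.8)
The plan is to mirror the proof of Theorem \ref{link}, replacing its orthogonal‑projection input Proposition \ref{projection} by the compressed analogue Proposition \ref{substitute}. Write $A = \cala_{\link_v \mathcal{C},\bfd'}$ and $B = \cala_{\mathcal{C}\setminus v,\bfd'}$, and present $\cala_{\mathcal{C},\bfd}$ by the block decomposition from Lemma 2.2 of \cite{hosten-sullivant2007} used in Theorem \ref{link}, with $d = d_v$ diagonal copies of $A$ and copies of $B$ along the bottom. Label the $dn$ columns so that the first $n$ are the columns of the first diagonal copy and the remaining $t := (d-1)n$ columns of the other copies are $y_1,\dots,y_t$. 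I would obtain $A$ from $\cala_{\mathcal{C},\bfd}$ by orthogonally projecting off the span of these last $t$ columns and discarding them, i.e.\ by applying the operation of Proposition \ref{substitute} to $y_1,\dots,y_t$. Exactly the rank computation in Theorem \ref{link} shows that the projected configuration has the same integer kernel, hence the same toric ideal, as $A$; combined with Proposition \ref{substitute} this gives $I_A = \psi(I_{\cala_{\mathcal{C},\bfd}})$, where $\psi$ is the substitution sending each $y_i \mapsto 1$.

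The new content is that this substitution preserves compressedness, which I would isolate as a lemma: if $N$ is a configuration with homogeneous toric ideal that is compressed, and $\psi$ sets some of the variables to $1$ so that $\psi(I_N)$ is again homogeneous, then $\psi(I_N)$ is compressed. To prove it, fix an arbitrary reverse lexicographic order $\prec'$ on the surviving variables $x_1,\dots,x_n$ and extend it to a reverse lexicographic order $\prec$ on all variables in which $y_1,\dots,y_t$ are the smallest. Since $N$ is compressed, $\mathrm{in}_\prec(I_N)$ is squarefree.

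The key point is that no minimal generator of $\mathrm{in}_\prec(I_N)$ is divisible by any $y_i$. Indeed, $I_N$ is a toric ideal, hence prime and containing no variable, so it is saturated with respect to $m := y_1\cdots y_t$; and for a reverse lexicographic order in which the $y_i$ are the smallest variables one has $\mathrm{in}_\prec(I_N) : m^\infty = \mathrm{in}_\prec(I_N : m^\infty) = \mathrm{in}_\prec(I_N)$ (compatibility of reverse lexicographic initial ideals with saturation by the smallest variables; see \cite{sturmfels}). Thus $\mathrm{in}_\prec(I_N)$ is saturated with respect to each $y_i$, so its minimal generators lie in $\kk[x_1,\dots,x_n]$. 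Consequently the reduced Gr\"obner basis $G$ of $I_N$ has all of its leading terms in $\kk[x_1,\dots,x_n]$, and because $I_N$ is homogeneous a short degree argument gives $\mathrm{in}_{\prec'}(\psi(g)) = \mathrm{in}_\prec(g)$ for every $g \in G$ and shows that $\psi(G)$ is a Gr\"obner basis of $\psi(I_N)$. Hence $\mathrm{in}_{\prec'}(\psi(I_N)) = \mathrm{in}_\prec(I_N)$ is squarefree; since $\prec'$ was arbitrary, $\psi(I_N)$ is compressed. Applying the lemma with $N = \cala_{\mathcal{C},\bfd}$ and using $I_A = \psi(I_{\cala_{\mathcal{C},\bfd}})$ finishes the proof.

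I expect the main obstacle to be the cited Gr\"obner‑theoretic input — that dehomogenizing by the smallest variables of a reverse lexicographic order commutes with taking initial ideals — together with the bookkeeping needed to run Proposition \ref{substitute} on a whole block of columns at once (checking that the eliminated columns stay primitive, so that the hypotheses of Proposition \ref{substitute} hold, and that the projected configuration really reproduces $I_A$). The degree argument for $\mathrm{in}_{\prec'}(\psi(g)) = \mathrm{in}_\prec(g)$ is routine: for homogeneous $g$ with $y$‑free leading term, any trailing monomial involving some $y_i$ strictly drops in degree under $\psi$ while the leading monomial does not, so it cannot overtake it and no cancellation with the leading term occurs. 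An alternative to the simultaneous substitution would be to iterate Proposition \ref{substitute} one column at a time; the cost is that the intermediate toric ideals need not be homogeneous, so one must track the squarefree–initial–ideal condition directly rather than invoke compressedness at each stage.
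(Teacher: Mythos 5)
Your high-level plan coincides with the paper's proof: the same block decomposition of $\mathcal{A}_{\calc,\bfd}$, the same use of Proposition \ref{substitute} to identify $I_A$ with the image of $I_{\mathcal{A}_{\calc,\bfd}}$ under the substitution $\psi$, and then a transfer of squarefree initial terms between a reverse lexicographic order on $\kk[\bfx]$ and an extension of it to $\kk[\bfx,\bfy]$. The gap is in how you extend the order. You make the $\bfy$ variables the \emph{smallest}, and then need the claim that no minimal generator of $\mathrm{in}_\prec(I_N)$ involves any $y_i$, which you justify by ``$\mathrm{in}_\prec(I_N) : m^\infty = \mathrm{in}_\prec(I_N : m^\infty)$ for $m = y_1\cdots y_t$.'' That compatibility of revlex initial ideals with saturation holds only for the \emph{single smallest} variable (this is the standard fact, e.g.\ Eisenbud, Prop.\ 15.12); for a product of several of the smallest variables it is false, and so is your structural claim, even in your exact setting. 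Take $\calc$ with facets $\{1\},\{2\}$, $\bfd=(2,2)$, $v=2$, so $N = \mathcal{A}_{\calc,\bfd}$ is unimodular, hence compressed, $I_N = \langle x_{11}x_{22}-x_{21}x_{12}\rangle$, and the $\bfy$-variables are $x_{12},x_{22}$ (the columns in the second diagonal block). Under graded revlex with $x_{11}\succ x_{21}\succ x_{12}\succ x_{22}$, the initial term of the generator is $x_{21}x_{12}$, which is divisible by the $y$-variable $x_{12}$; correspondingly $\mathrm{in}_\prec(I_N):(x_{12}x_{22})^\infty = \langle x_{21}\rangle \neq \mathrm{in}_\prec(I_N)$, while $I_N : (x_{12}x_{22})^\infty = I_N$. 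Your subsequent identity $\mathrm{in}_{\prec'}(\psi(g)) = \mathrm{in}_\prec(g)$ then also collapses: here $\psi(g) = x_{11}-x_{21}$ has initial term $x_{11}$, whereas $\psi(\mathrm{in}_\prec(g)) = x_{21}$. So the key lemma, as you prove it, is broken, even though its statement is true.

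The paper avoids this by making the opposite choice: the $\bfy$ variables are placed \emph{ahead of} (larger than) the $\bfx$ variables, so that the $\bfx$ variables form the smallest block. Then, using homogeneity, for any binomial $\bfy^{v^+}\bfx^{u^+}-\bfy^{v^-}\bfx^{u^-} \in I_{\mathcal{A}_{\calc,\bfd}}$ with $u^+ \neq u^-$ the revlex comparison is decided entirely by the $\bfx$-parts, so the initial term of any lift of $\bfx^{u^+}-\bfx^{u^-} \in I_A$ is $\bfy^{v^+}\bfx^{u^+}$. Compressedness of $\mathcal{A}_{\calc,\bfd}$ gives a Gr\"obner basis element with squarefree initial term dividing $\bfy^{v^+}\bfx^{u^+}$; after a reduction argument ensuring its $\bfx$-part is nonzero, applying $\psi$ produces a squarefree monomial in $\mathrm{in}_\prec(I_A)$ dividing $\bfx^{u^+}$, which forces the minimal generators of $\mathrm{in}_\prec(I_A)$ to be squarefree. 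No saturation statement, and no claim that initial terms upstairs avoid the $\bfy$'s, is ever needed --- in the example above the upstairs initial term $x_{11}x_{22}$ does contain a $y$-variable, harmlessly. If you invert your variable order and replace the saturation lemma with this lifting-of-initial-terms argument, your proof becomes the paper's; your fallback of eliminating one variable at a time does not repair the original version, for the reason you yourself note (the intermediate ideals are inhomogeneous, so even the single-variable fact is unavailable there).
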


\begin{proof}
	Let $A = \mathcal{A}_{\link_v{\mathcal{C}},{\bf d}'}$ and let $B = \mathcal{A}_{\mathcal{C}\setminus v, {\bf d}'}$.
	As in the proof of Theorem \ref{link}, we know that we can write
		\[
			\mathcal{A}_{\mathcal{C},{\bf d}} =\left[\begin{matrix}
			A &  {\bf 0}  & \ldots & {\bf 0}\\
			{\bf 0}  &  A & \ldots & {\bf 0}\\
			\vdots & \vdots & \ddots & \vdots\\
			{\bf 0}  &   {\bf 0}       &\ldots & A\\
			B & B & \dots & B
			\end{matrix}\right]
		\]
	and that projecting onto the space orthogonal to the last $(d-1)n$ columns and deleting unnecessary rows,
	then deleting columns of zeros, leaves the matrix $A$.
	
	Let $\kk[\bfx]$ be the ring containing $I_A$, and $\kk[\bfx,\bfy]$ be the ring containing $I_{\mathcal{A}_{\calc,\bfd}}$.
	By Proposition \ref{substitute}, $I_A$ is obtained from $I_{\mathcal{A}_{\calc,\bfd}}$
	by plugging in $1$ for all the $\bfy$ variables.
	Since $I_A$ is the toric ideal for a hierarchical model, it is homogeneous.
	Consider any reverse lexicographic order $\prec$ on $\kk[\bfx]$.
	Extend this to a reverse lexicographic order $\prec'$ on $\kk[\bfx,\bfy]$
	by putting the $\bfy$ variables ahead of the $\bfx$ variables in the variable order (not in the term order).
	This means that if $\bfx^{u^+}-\bfx^{u^-} \in I_A$ has initial term $\bfx^{u^+}$ with respect to $\prec$, then for any $\bfv$
	such that $\bfy^{v^+}\bfx^{u^+}-\bfx^{u^-}\bfy^{v^-} \in I_{\mathcal{A}_{\calc,\bfd}}$,
	then $\bfy^{v^+}\bfx^{u^+}$ is the initial term with respect to $\prec'$.

Since	$\mathcal{A}_{\calc,\bfd}$ is compressed, there is a Gr\"obner basis
element $\bfy^{\bar{v}^+}\bfx^{\bar{u}^+}-\bfx^{\bar{u}^-}\bfy^{\bar{v}^-} \in  I_{\mathcal{A}_{\calc,\bfa}}$  
whose leading term $\bfy^{\bar{v}^+}\bfx^{\bar{u}^+}$ is a 
squarefree monomial and divides
$\bfy^{v^+}\bfx^{u^+}$.  Without loss of generality, we can assume that
$\bar{u}^{+} \neq 0$.  Indeed, any $\bfy^{\bar{v}^+}\bfx^{\bar{u}^+}-\bfx^{\bar{u}^-}\bfy^{\bar{v}^-} \in  I_{\mathcal{A}_{\calc,\bfa}}$ 
with $\bar{u}^{+} = 0$ must also satisfy $\bar{u}^{-} = 0$, and so we can
reduce $\bfy^{v^+}\bfx^{u^+}-\bfx^{u^-}\bfy^{v^-}$ with respect to such
polynomials to obtain a polynomial where $\bfy^{v^+}$ is reduced with respect
to the Gr\"obner basis of $I_{\mathcal{A}_{\calc,\bfd}}$.

Now apply the map $\phi: \kk[\bfx, \bfy] \rightarrow \kk[\bfx]$
that sets all $y$ variables to $1$.  Then the leading term of 
$\phi(\bfy^{\bar{v}^+}\bfx^{\bar{u}^+}-\bfx^{\bar{u}^-}\bfy^{\bar{v}^-})
= \bfx^{\bar{u}^+}-\bfx^{\bar{u}^-}$
is $\bfx^{\bar{u}^+}$ which divides $\bfx^{u^+}$.  This implies that
the initial terms in any Gr\"obner basis element of $I_{A}$ with respect
to the reverse lexicographic order are squarefree.  Hence, 
	$\mathcal{A}_{\link_v{\mathcal{C}},{\bf d}'}$ is compressed.
\end{proof}


\section{Computational Results}\label{sec:computations}

In this section we summarize the results of our computations
to classify the normality and compressed-ness of $\cala_{\calc}$ for complexes
with few vertices.  In particular, we highlight the
simplicial complexes whose normality and compressed-ness, or lack thereof, cannot be determined
from our existing construction methods for normal complexes or our
examples of minimally nonnormal complexes. 
In particular, we wrote a Mathematica code to generate,
up to symmetry, all simplicial complexes for which normality and compressed-ness cannot { be} ruled out
by looking at induced subcomplexes, vertex links, edge contraction, 
cone vertices, or reducibility.  From the resulting lists we excluded 
all simplicial complexes that are graphs (where normality is already classified by
\cite{sullivant2010}) or that are of Lawrence type (where normality
follows from the classification of unimodular complexes \cite{bernstein-sullivant2015}).  Then we use Normaliz \cite{normaliz} to decide
normality of the remaining complexes in our lists.  The results
are summarized below. 

\subsection{Four and Fewer Vertices}
Every simplicial complex on $1,2,$ or $3$ vertices is normal
and all of these follow from the construction methods described
above.  On $m = 4$ vertices, the normality or nonnormality
of all simplicial complexes can be decided by the methods listed
above.  Among all complexes on $4$ vertices, the only two that
are nonnormal are the following:
\begin{enumerate}
\item  $[12][13][14][23][24][34]$
\item  $[123][14][24][34]$
\end{enumerate}
both of which are clearly minimally nonnormal.
The compressed complexes on four vertices are exactly the normal ones.

\subsection{Five Vertices}
Applying the procedure described above to
simplicial complexes on $5$ vertices produces $14$ complexes
which the techniques mentioned above do not decide the normality of.
Of these $14$ complexes, $6$ were normal.
These are
\begin{enumerate}
\item $[12][13][245][345]$
\item $[12][134][235][345]$
\item $[123][124][135][245]$
\item $[12][134][135][234][235]$
\item $[123][124][134][235][245]$
\item $[123][124][134][235][245][345]$.
\end{enumerate}
The first complex is normal because it is equivalent to the four-cycle
with $\bfd = (2,2,2,4)$, so normality follows from Theorem \ref{thm:4cycle}.
The last complex is normal because it is a unimodular complex,
in particular it is the Alexander dual of $\Delta_{1} \sqcup \Delta_{2}$.
For the other complexes, normality was verified using Normaliz.

The remaining $8$ out of $14$ complexes are all nonnormal.  These are
\begin{enumerate}
			\item $[123][124][135][245][345]$
			\item $[123][124][125][134][234][345]$
			\item $[123][124][125][134][235][345]$
			\item $[12][134][135][145][234][235][245]$
			\item $[123][124][125][134][135][234][245]$
			\item $[123][124][125][134][135][245][345]$
			\item $[12][134][135][145][234][235][245][345]$
			\item $[123][124][125][134][135][234][245][345]$.
\end{enumerate}
The second to last complex on this list is verified to be non-normal
as a special case of Theorem \ref{example}. 
{ The other complexes on the list}
were shown to be non-normal using Normaliz.
On five vertices, the minimal non-normal complexes consist of these
eight simplicial complexes, plus the two complexes of Lawrence type
\begin{enumerate}
\item $[1234][125][235][345]$
\item $[1234][15][235][245][345]$.
\end{enumerate}

We can use the results from $4$-vertex complexes
and Theorem \ref{linkcompressed} and Proposition \ref{opcompressed}
to show that every normal complex on $5$ vertices is compressed, aside from the 5-cycle
\[
	[12][23][34][45][15].
\]

\subsection{Six Vertices}

We applied the above procedure to simplicial complexes on $6$ vertices.
This produced $80$ complexes for which the techniques above do not determine normality.
Using some other techniques, described below, we were able to determine
normality and compressedness of all of these complexes.
The results are described in the table below.

\newpage

\footnotesize
\begin{center}
\begin{longtable}{|l|l|l|l|l|} 
\hline \multicolumn{1}{|c|}{} & \multicolumn{1}{c|}{\textbf{Complex}} & \multicolumn{1}{c|}{\textbf{Normal?}} & \multicolumn{1}{c|}{\textbf{Notes}} & \multicolumn{1}{c|}{\textbf{Compressed?}} \\ \hline 
\endfirsthead

\multicolumn{5}{c}%
{{\bfseries \tablename\ \thetable{} -- continued from previous page}} \\
\hline \multicolumn{1}{|c|}{} &
\multicolumn{1}{c|}{\textbf{Complex}} &
\multicolumn{1}{c|}{\textbf{Normal?}} &
\multicolumn{1}{c|}{\textbf{Notes}} &
\multicolumn{1}{c|}{\textbf{Compressed?}} \\ \hline 
\endhead

\hline \multicolumn{5}{|r|}{{Continued on next page}} \\ \hline
\endfoot

\hline \hline
\endlastfoot
	$0$ & $[1456][1236][2345]$&No& O &No\\ \hline
	$1$ & $[2346][16][2345][15]$&Yes& O&Yes\\ \hline
	$2$ & $[2346][16][2345][125]$&Yes & AC&Yes\\ \hline
	$3$ & $[236][16][2345][145]$&Yes& C&Yes\\ \hline
	$4$ & $[2346][16][2345][1235]$&Yes & AC&Yes\\ \hline
	$5$ & $[3456][126][1345][123]$&Yes & AR&Yes\\ \hline
	$6$ & $[346][126][345][125]$&Yes & AR&Yes\\ \hline
	$7$ & $[1346][126][345][125]$&Yes& C&Yes\\ \hline
	$8$ & $[1346][126][1345][235]$&Yes& C&Yes\\ \hline
	$9$ & $[3456][126][1345][1234]$&Yes& C&Yes\\ \hline
	$10$ & $[1246][1236][2345][1345]$&Yes & AR&Yes\\ \hline
	$11$ & $[26][16][345][245][13]$&Yes & AR&No\\ \hline
	$12$ & $[236][16][245][235][14]$&Yes & AR&No\\ \hline
	$13$ & $[2456][236][16][1245][123]$&Yes & AR&Yes\\ \hline
	$14$ & $[236][16][245][235][124]$&Yes & AC&No\\ \hline
	$15$ & $[236][16][245][145][234]$&Yes& C&No\\ \hline
	$16$ & $[2356][2346][16][1235][1234]$&Yes & AC&Yes\\ \hline
	$17$ & $[346][126][345][135][123]$&Yes & AR&No\\ \hline
	$18$ & $[2456][136][126][2345][123]$&Yes & AR&Yes\\ \hline
	$19$ & $[3456][2356][1346][126][123]$&Yes & AC&Yes\\ \hline
	$20$ & $[136][126][345][245][124]$&Yes& C&Yes\\ \hline
	$21$ & $[136][126][2345][1345][124]$&Yes & AR&Yes\\ \hline
	$22$ & $[1356][346][126][1345][124]$&Yes & AR&Yes\\ \hline
	$23$ & $[2356][1346][126][2345][124]$&Yes& C&Yes\\ \hline
	$24$ & $[3456][1346][126][1345][125]$&Yes & AR&Yes\\ \hline
	$25$ & $[2456][1356][1246][1236][125]$&Yes & AR&Yes\\ \hline
	$26$ & $[2356][1346][126][2345][1234]$&Yes & AC&Yes\\ \hline
	$27$ & $[3456][1346][126][2345][1234]$&Yes& C&Yes\\ \hline
	$28$ & $[3456][1246][1236][1345][1234]$&Yes & AR&Yes\\ \hline
	$29$ & $[3456][1246][1236][1245][1235]$&No&C&No\\ \hline
	$30$ & $[26][16][245][145][23][13]$&Yes& C&Yes\\ \hline
	$31$ & $[456][356][246][136][126][14]$&Yes & AR&No\\ \hline
	$32$ & $[246][236][16][245][235][15]$&Yes & AR&Yes\\ \hline
	$33$ & $[236][16][235][125][234][124]$&Yes & AC&Yes\\ \hline
	$34$ & $[3456][2456][136][126][234][123]$&Yes & AR&Yes\\ \hline
	$35$ & $[246][136][126][245][235][123]$&Yes & AR&No\\ \hline
	$36$ & $[3456][2456][136][126][2345][123]$&Yes & AR&Yes\\ \hline
	$37$ & $[2346][1346][126][2345][1345][123]$&Yes & AC&Yes\\ \hline
	$38$ & $[2356][1356][2346][1346][126][123]$&Yes & AR&Yes\\ \hline
	$39$ & $[136][126][235][125][134][124]$&Yes & AR&Yes\\ \hline
	$40$ & $[136][126][245][235][134][124]$&Yes & AR&No\\ \hline
	$41$ & $[136][126][345][245][134][124]$&Yes& C&Yes\\ \hline
	$42$ & $[3456][2456][136][126][134][124]$&Yes& C&Yes\\ \hline
	$43$ & $[136][126][145][135][234][124]$&Yes & AR&No\\ \hline
	$44$ & $[136][126][235][135][234][124]$&Yes & AR&Yes\\ \hline
	$45$ & $[136][126][345][135][234][124]$&Yes & AR&No\\ \hline
	$46$ & $[2346][136][126][2345][1345][124]$&Yes& C&Yes\\ \hline
	$47$ & $[3456][2356][1346][126][2345][124]$&Yes& C&Yes\\ \hline
	$48$ & $[2346][1346][126][2345][1345][125]$&Yes& C&Yes\\ \hline
	$49$ & $[246][136][126][345][245][135]$&No&C&No\\ \hline
	$50$ & $[1356][2346][1346][126][1235][1234]$&Yes & AR&Yes\\ \hline
	$51$ & $[3456][2356][1346][126][2345][1234]$&Yes & AC&Yes\\ \hline
	$52$ & $[2456][1356][1246][1236][1345][1234]$&No&C&No\\ \hline
	$53$ & $[2356][1346][1246][1236][2345][1234]$&Yes & AR&Yes\\ \hline
	$54$ & $[1346][1246][1236][2345][1245][1235]$&Yes & AR&Yes\\ \hline
	$55$ & $[2456][1356][1246][1236][2345][1345]$& { No}  & C & { No}
	\\ \hline
	$56$ & $[346][246][136][126][245][125][14]$&Yes & AR&No\\ \hline
	$57$ & $[456][356][256][146][136][126][15]$&Yes & AR&Yes\\ \hline
	$58$ & $[356][246][136][126][235][234][123]$&Yes & AR&Yes\\ \hline
	$59$ & $[356][246][136][126][345][234][123]$&Yes & AR&No\\ \hline
	$60$ & $[346][246][136][126][345][245][123]$&Yes & AR&No\\ \hline
	$61$ & $[136][126][135][125][234][134][124]$&Yes & AR&Yes\\ \hline
	$62$ & $[246][136][126][345][245][135][125]$&No&C&No\\ \hline
	$63$ & $[3456][126][2345][1345][1245][1235][1234]$&No&C&No\\ \hline
	$64$ & $[2356][1356][2346][1346][126][1235][1234]$&Yes & AR&Yes\\ \hline
	$65$ & $[3456][2356][1346][126][2345][1345][1245]$&{ No}  & C & { No}\\ \hline
	$66$ & $[1456][1356][1246][1236][2345][1345][1234]$&No&C&No\\ \hline
	$67$ & $[2456][1356][1246][1236][2345][1345][1234]$& { No}  & C & { No}\\ \hline
	$68$ & $[2456][2356][1346][1246][1236][2345][1234]$&Yes & AR&Yes\\ \hline
	$69$ & $[456][356][246][136][126][345][234][123]$&Yes & AR&No\\ \hline
	$70$ & $[136][126][235][135][125][234][134][124]$&Yes & AR&Yes\\ \hline
	$71$ & $[346][246][136][126][345][245][135][125]$&No&C&No\\ \hline
	$72$ & $[1456][1356][2346][1346][126][2345][1345][125]$& { No}  & C & { No}\\ \hline
	$73$ & $[3456][2456][2356][1346][1246][1236][2345][1234]$&Yes & U&Yes\\ \hline
	$74$ & $[2456][2356][1346][1246][1236][1345][1245][1235]$&No&C&No\\ \hline
	$75$ & $[3456][2456][2356][1346][1246][1236][2345][15][1234]$& { No}  & C & { No}\\ \hline
	$76$ & $[236][136][126][235][135][125][234][134][124]$&Yes & AR&Yes\\ \hline
	$77$ & $[3456][1456][1356][2346][1346][126][2345][1345][125]$& 
	{ No}  & C & { No}\\ \hline
	$78$ & $[3456][2456][2356][1346][1246][1236][1345][1245][1235]$&Yes &U&Yes\\ \hline
	$79$ & $[3456][2456][2356][2346][16][2345][1345][1245][1235][1234]$&No&O&No
\end{longtable}
\end{center}

\normalsize

\begin{itemize}
\item[U:] Complex is unimodular.
\item[AC:] Complex is proved normal using  Proposition \ref{almost} and
the fact that one face can be added to produce a cone.
\item[AR:] Complex is proved normal using  Proposition \ref{almost} and
the fact that one face can be added to produce a reducible complex.
\item[C:]  Complex was proved normal, or not normal, by a computation in
Normaliz.
\item[O:]  Normality/Nonnormality determined by other method or theoretical result.
\end{itemize}

The complexes indicated by an O had their normality or nonnormality
decided by a theoretical result mentioned in the paper.
Complex $1$ has the same matrix as $\mathcal{A}_{\calc,\bfd}$ where $\calc$ is the $4$-cycle and $\bfd = (2,2,2,8)$,
which is normal by Theorem \ref{thm:4cycle}.
Complexes $73$ and $78$ are unimodular and therefore normal.
Complex $0$ has the same matrix as $\mathcal{A}_{\calc,\bfd}$ where $\calc$ is the $3$-cycle and $\bfd = (4,4,4)$
which is not normal by Theorem \ref{triangle}.
Complex $79$ is an instance of the minimally non-normal family from Theorem \ref{example}.
{ For complexes 55, 65, 67, 72, 75, and 77 our computations with Normaliz
were initially unsuccessful.   The Normaliz team  was able to confirm nonnormality of these examples using the current development version.}

Proposition \ref{almost} was used to prove normality in a number of
cases where we added a face and either produced a reducible complex
or a cone complex that was known to be normal. These are indicated in the 
table with an AR or AC respectively.
With our expanded list of complexes known to be normal, we can apply this same technique over and over again
until we no longer add any new complexes to the set that we know to be normal.
This technique only gives us three more complexes - adding the facet $[236]$ to complex $27$ yields
$37$, adding $[23]$ to $41$ yields $56$ and adding $[134]$ to $20$ yields $41$.
In all cases, the added face satisfies the conditions of Lemma \ref{almost}, and so complexes $27,41,$ and $20$ are normal.


{

\section{Future Directions}

Results from the previous
section suggest that a complete classification of complexes $\calc$ such that
$\cala_\calc$ in normal is probably extremely complicated.  
At any rate, our current tools
for proving normality or nonnormality are inadequate for completing a classification.
This suggest the following problems:

\begin{pr}
Develop new construction techniques for producing complexes $\calc$ such that
$\cala_\calc$ is normal.
\end{pr}

The technique based on Proposition \ref{almost} remains mysterious and application
of it in more settings (and generalizations of it) would require an
understanding of the facet defining inequalities of $\rr_{\geq 0}  \cala_\calc$
for families of simplicial complexes.

\begin{pr}
Develop new methods for constructing holes in the semigroups $\nn \cala_\calc$.
\end{pr}

In particular, it would be worthwhile  to analyze the holes produced in the nonnormal
examples found above and try to generalize them to find infinite families
of minimal nonnormal complexes.

While a complete classification of the complexes $\calc$ 
such that $\cala_\calc$ is normal is
out of reach at present, it might be reasonable to try to classify low dimensional
complexes that yield normal vector configurations.  For example, the 
set of $1$-dimensional $\calc$ such that $\cala_\calc$ is normal
is extremely simple (the set of $K_4$-minor free graphs \cite{sullivant2010}), so perhaps
$2$-dimensional complexes $\calc$ such that $\cala_\calc$ are normal
also have a simpler description.  As a starting point, we propose
the following:

\begin{pr}\label{pr:2dim}
Classify the two-dimensional manifolds $\calc$ such that $\cala_\calc$
is normal.
\end{pr}

In principle, the solution to Problem \ref{pr:2dim} should yield
a relatively simple list of complexes.  Indeed, a $2$-dimensional manifold
with many triangles seems likely to have a minor isomorphic to $K_4$ or the
complex $[123][14][24][34]$, so there seem to be few possibilities.

}

\section*{Acknowledgments}
{Thanks to Winfried Bruns and Christof S\"oger for computations with
the development version of Normaliz that
allowed us to complete the classification of normal complexes on six vertices.}
Daniel Irving Bernstein was partially supported by the US National Science Foundation (DMS 0954865).
Seth Sullivant was partially supported by the David and Lucille Packard 
Foundation and the US National Science Foundation (DMS 0954865). 

{\footnotesize
\bibliography{normal}}
\bibliographystyle{plain}

\end{document}